\newtheorem{definition}{\indent Definition}
\newtheorem{lemma}{\indent Lemma}
\newtheorem{corollary}{\indent Corollary}
\newtheorem{theorem}{\indent Theorem}
\newtheorem{proposition}{\indent Proposition}
\newenvironment{proof}{\textbf{Proof.}}{\hfill $\Box$}
\newcommand{\K}{\mathbb{K}}
\newcommand{\g}{\mathfrak{g}}
\newcommand{\h}{\mathcal{H}}
\newcommand{\U}{\mathcal{U}}
\newcommand{\N}{\mathbb{N}}
\newcommand{\siso}{\g_{SISO}}
\newcommand{\SISO}{G_{SISO}}
\newcommand{\Lie}{\mathcal{L}ie}
\newcommand{\Mag}{\mathcal{M}ag}
\newcommand{\shufg}[1]{\:_{#1}\shuffle\:}
\newcommand{\shufd}[1]{\:\shuffle_{#1}\:}
\begin{document}

\title{Extension of the product of a post-Lie algebra and application to the SISO feedback transformation group}
\author{Lo\"\i c Foissy\\ \\
{\small \it Fédération de Recherche Mathématique du Nord Pas de Calais FR 2956}\\
{\small \it Laboratoire de Mathématiques Pures et Appliquées Joseph Liouville}\\
{\small \it Université du Littoral Côte dOpale-Centre Universitaire de la Mi-Voix}\\ 
{\small \it 50, rue Ferdinand Buisson, CS 80699,  62228 Calais Cedex, France}\\ \\
{\small \it email: foissy@univ-littoral.fr}}

\date{}

\maketitle

\begin{abstract}
We describe the both post- and pre-Lie algebra $\g_{SISO}$ associated to the affine SISO feedback transformation group.
We show that it is a member of a family of post-Lie algebras associated to representations of a particular solvable Lie algebra.
We first construct the extension of the magmatic product of a post-Lie algebra to its enveloping algebra, 
which allows to describe free post-Lie algebras and is widely used to obtain the enveloping of $\g_{SISO}$ and its dual.
\end{abstract}

\textit{AMS classification.} 17B35; 17D25; 93C10; 93B25; 16T05.\\

\textit{Keywords.} Post-Lie algebras; feedback transformation group; solvable Lie algebras.

\tableofcontents

\section*{Introduction}

The affine SISO feedback transformation group  $\SISO$ \cite{Gray}, which appears in Control Theory,
can be seen as the character group of a Hopf algebra $\h_{SISO}$; let us start by a short presentation of this object
(we slightly modify the notations of \cite{Gray}).
\begin{enumerate}
\item First, let us recall some algebraic structures on noncommutative polynomials.
\begin{enumerate}
\item Let $x_1$,  $x_2$ be two indeterminates. We consider the algebra of noncommutative polynomials $\K\langle x_1,x_2\rangle$.
As a vector space, it is generated by words in letters $x_1$, $x_2$; its product is the concatenation of words;
its unit, the empty word, is denoted by $\emptyset$.
\item $\K\langle x_1,x_2\rangle$ is a Hopf algebra with the concatenation product and the deshuffling coproduct $\Delta_{\shuffle}$,
defined by $\Delta_{\shuffle}(x_i)=x_i\otimes \emptyset+\emptyset \otimes x_i$, for $i \in \{1,2\}$.
\item $\K\langle x_1,x_2\rangle$ is also a commutative, associative algebra with the shuffle product $\shuffle$: for example, if $i,j,k,l\in \{1,2\}$,
\begin{align*}
x_i\shuffle x_j&=x_ix_j+x_jx_i,\\
x_ix_j\shuffle x_k&=x_ix_jx_k+x_ix_kx_j+x_kx_ix_j,\\
x_i\shuffle x_jx_k&=x_ix_jx_k+x_jx_ix_k+x_jx_kx_i,\\
x_ix_j\shuffle x_kx_l&=x_ix_jx_kx_l+x_ix_kx_jx_l+x_ix_kx_lx_j+x_kx_ix_jx_l+x_kx_ix_lx_j+x_kx_lx_ix_j.
\end{align*} \end{enumerate}
\item The vector space $\K\langle x_1,x_2\rangle^2$ is generated by words $x_{i_1}\ldots x_{i_k}\epsilon_j$, where $k\geq 0$, $i_1,\ldots,i_k,j\in \{1,2\}$,
and $(\epsilon_1,\epsilon_2)$ denotes the canonical basis of $\K^2$. 
\item As an algebra, $\h_{SISO}$ is equal to the symmetric algebra $S(\K\langle x_1,x_2\rangle^2)$; its product is denoted by $\mu$
and its unit by $1$. Two coproducts  $\Delta_*$ and $\Delta_\bullet$ are defined on $\h_{SISO}$. For all $h\in \h_{SISO}$, we put
$\overline{\Delta}_*(h)=\Delta_*(h)-1\otimes h$ and $\overline{\Delta}_\bullet(h)=\Delta_\bullet(h)-1\otimes h$. Then:
\begin{itemize}
\item For all $i\in \{1,2\}$, $\overline{\Delta}_*(\emptyset\epsilon_i)=\emptyset\epsilon_i\otimes 1$.
\item For all $g\in \K\langle x_1,x_2\rangle$, for all $i\in  \{1,2\}$:
\begin{align*}
\overline{\Delta}_*\circ \theta_{x_1}(g\epsilon_i)&=(\theta_{x_1}\otimes Id)\circ \overline{\Delta}_*(g\epsilon_i)+
(\theta_{x_2}\otimes \mu)\circ(\overline{\Delta}_*\otimes Id)(\Delta_{\shuffle}(g) \epsilon_i\otimes \epsilon_2),\\
\overline{\Delta}_*\circ \theta_{x_2}(g\epsilon_i)&=(\theta_{x_2}\otimes \mu)\circ(\overline{\Delta}_*\otimes Id)(\Delta_{\shuffle}(g) \epsilon_i\otimes \epsilon_1),
\end{align*}
where $\theta_{x}(h\epsilon_i)=xh\epsilon_i$ for all $x\in \{x_1,x_2\}$, $h\in \K\langle x_1,x_2\rangle$, $i\in  \{1,2\}$. 
These are formulas of Lemma 4.1 of \cite{Gray}, with the notations $a_w=w\epsilon_2$, $b_w=w\epsilon_1$, $\theta_0=\theta_{x_1}$,
$\theta_1=\theta_{x_2}$ and $\tilde{\Delta}=\overline{\Delta}_*$.
\item for all $g\in \K\langle x_1,x_2\rangle$:
\begin{align*}
\overline{\Delta}_\bullet(g\epsilon_1)&=(Id\otimes \mu)\circ (\overline{\Delta}_*\otimes Id)(\Delta_{\shuffle}(g)(\epsilon_1\otimes \epsilon_1)),\\
\overline{\Delta}_\bullet(g\epsilon_2)&=\overline{\Delta}_*(g\epsilon_2)+
(Id\otimes \mu)\circ (\overline{\Delta}_*\otimes Id)(\Delta_{\shuffle}(g)(\epsilon_2\otimes \epsilon_1)).
\end{align*}\end{itemize}
This coproduct $\Delta_\bullet$ makes $\h_{SISO}$ a Hopf algebra, and $\Delta_*$ is a right coaction on this coproduct, that is to say:
\begin{align*}
(\Delta_\bullet \otimes Id)\circ \Delta_\bullet&=(Id \otimes \Delta_\bullet)\circ \Delta_\bullet,&
(\Delta_* \otimes Id)\circ \Delta_*&=(Id \otimes \Delta_\bullet)\circ \Delta_*.
\end{align*}
\item After the identification of $\emptyset\epsilon_1$ with the unit of $\h_{SISO}$, we obtain a commutative, graded and connected Hopf algebra, 
in other words the dual of an enveloping algebra $\U(\siso)$. 
\end{enumerate}
Our aim is to give a description of the underlying Lie algebra $\siso$. It turns out that it is both a pre-Lie algebra
(or a Vinberg algebra \cite{Cartier}, see \cite{Manchon} for a survey on these objects) and a post-Lie algebra \cite{MK,Vallette}: it has a Lie bracket
$\!_a[-,-]$ and two nonassociative products $*$ and $\bullet$, such that for all $x,y,z\in \siso$:
\begin{align*}
x*\:_a[y,z]&=(x*y)*z-x*(y*z)-(x*z)*y+x*(z*y),\\
\:_a[x,y]*z&=\:_a[x*z,y]+\:_a[x,y*z];
\end{align*}
\begin{align*}
(x\bullet y)\bullet z-x\bullet (y\bullet z)&=(x\bullet z)\bullet y-x\bullet (z\bullet y).
\end{align*}
The Lie bracket on $\siso$ corresponding to $G_{SISO}$ is $\:_a[-,-]_*$:
\begin{align*}
\forall x,y \in \siso,\: \:_a[x,y]_*&=\:_a[x,y]+x*y-y*x=x\bullet y-y\bullet x.
\end{align*} 

Let us be more precise on these structures. As a vector space, $\siso=\K\langle x_1,x_2\rangle^2$, and:
\begin{align*}
&\forall f,g\in \K\langle x_1,x_2\rangle, \:\forall i,j \in  \{1,2\},& \:_a[f\epsilon_i,g\epsilon_j]
=\begin{cases}
0\mbox{ if }i=j,\\
-f\shuffle g \epsilon_2\mbox{ if }i=2\mbox{ and }j=1,\\
f\shuffle g \epsilon_2\mbox{ if }i=1\mbox{ and }j=2.
\end{cases}\end{align*}
The magmatic product $*$ is inductively defined. If $f,g \in \K\langle x_1,x_2\rangle$ and $i,j \in  \{1,2\}$:
\begin{align*}
\emptyset\epsilon_i*g\epsilon_j&=0,&x_2f\epsilon_i*g\epsilon_1&=x_2(f\epsilon_i*g\epsilon_1)+x_2(f\shuffle g)\epsilon_i,\\
x_1f\epsilon_i*g\epsilon_j&=x_1(f\epsilon_i*g\epsilon_j),&x_2f\epsilon_i*g\epsilon_2&=x_2(f\epsilon_i*g\epsilon_2)+x_1(f\shuffle g)\epsilon_i.
\end{align*}
The pre-Lie product $\bullet$, first determined in \cite{Gray}, is given by: 
$$\forall f,g\in \K\langle x_1,x_2\rangle, \:\forall i,j\in  \{1,2\}, \:f\epsilon_i \bullet g\epsilon_j=(f\shuffle g)\delta_{i,1} \epsilon_j+f\epsilon_i*g\epsilon_j.$$
We shall show here that this is a special case of a family of post-Lie algebras, associated to modules over certain solvable Lie algebras.\\

We start with general preliminary results on post-Lie algebras.
We extend the now classical Oudom-Guin construction on prelie algebras \cite{Oudom2,Oudom1} to the post-Lie context in the first section:
this is a result of \cite{Ebrahimi} (Proposition 3.1), which we prove here in a different, less direct way; our proof allows also
to obtain a description of free post-Lie algebras.
Recall that if $(V,*)$ is a pre-Lie algebra, the pre-Lie product $*$ can be extended to $S(V)$ in such a way that the product defined by:
$$\forall f,g\in S(V),\: f\circledast g=\sum f*g^{(1)} g^{(2)}$$
is associative, and makes $S(V)$ a Hopf algebra, isomorphic to $\U(V)$.
For any magmatic algebra $(V,*)$, we construct in a similar way an extension of $*$ to $T(V)$ in Proposition \ref{prop2}.
We prove in Theorem \ref{theo4} that the product $\circledast$ defined by:
$$\forall f,g\in T(V),\: f\circledast g=\sum f*g^{(1)} g^{(2)}$$
makes $T(V)$ a Hopf algebra. The Lie algebra of its primitive elements, which is the free Lie algebra $\Lie(V)$ generated by $V$,
is stable under $*$ and turns out to be a post-Lie algebra (Proposition \ref{prop5}) satisfying a universal property (Theorem \ref{theo7}).
In particular, if $V$ is, as a magmatic algebra, freely generated by a subspace $W$,  
$\Lie(V)$ is the free post-Lie algebra generated by $W$ (Corollary \ref{cor8}).
Moreover, if $V=([-,-],*)$ is a post-Lie algebra, this construction goes through the quotient defining $\mathcal{U}(V, [-,-])$, 
defining a new product $\circledast$ on it, making it isomorphic to the enveloping algebra of $V$ with the Lie bracket defined by:
$$\forall x,y\in V, \:[x,y]_*=[x,y]+x*y-y*x.$$
For example, if $x_1,x_2,x_3 \in V$:
 \begin{align*}
 x_1\circledast x_2&=x_1x_2+x_1*x_2\\
 x_1\circledast x_2x_3&=x_1x_2x_3+(x_1*x_2)x_3+(x_1*x_3)x_2+(x_1*x_2)*x_3-x_1*(x_2*x_3)\\
 x_1x_2\circledast x_3&=x_1x_2x_3+(x_1*x_3)x_2+x_1(x_2*x_3).
 \end{align*}
In the particular case where $[-,-]=0$, we recover the Oudom-Guin construction.

The second section is devoted to the study of a particular solvable Lie algebra $\g_a$ associated to an element $a\in \K^N$. 
As the Lie bracket of $\g_a$ comes from an associative product, the construction of the first section holds, with many simplifications: 
we obtain an explicit description of $\U(\g_a)$ with the help of a product $\blacktriangleleft$ on $S(\g_a)$ (Proposition \ref{prop19}). 
A short study of $\g_a$-modules when $a=(1,0,\ldots,0)$ (which is a generic case) is done in Proposition \ref{prop21}, considering $\g_a$ 
as an associative algebra, and in Proposition \ref{prop22}, considering it as a Lie algebra. In particular, if $\K$ is algebraically closed,
any $\g_a$ modules inherits a natural decomposition in characteristic subspaces.

Our family of post-Lie algebras is introduced in the third section; it is reminescent of the construction of \cite{Foissy2}.
Let us fix a vector space $V$, $(a_1,\ldots,a_N)\in\K^N$ and a family  $F_1,\ldots,F_N$ of endomorphisms of $V$. We define a product $*$ on $T(V)^N$,
such that for all $f,g \in T(V)$, $x\in V$, $i,j\in \{1,\ldots,N\}$:
\begin{align*}
\emptyset\epsilon_i*g\epsilon_j&=0,\\
xf\epsilon_i*g\epsilon_j&=x(f\epsilon_i*g\epsilon_j)+F_j(x)(f\shuffle g)\epsilon_i,
\end{align*}
where $(\epsilon_1,\ldots,\epsilon_N)$ is the canonical basis of $\K^N$ and $\shuffle$ is the shuffle product of $T(V)$. 
The Lie bracket of $T(V)^N$ that we shall use here is:
$$\forall f,g \in T(V), \:\forall i,j\in \{1,\ldots,N\},\: _a[f\epsilon_i,g\epsilon_j]=(f\shuffle g)(a_i \epsilon_j-a_j\epsilon_i).$$
This Lie bracket comes from an associative product $_a\shuffle$ defined by:
$$\forall f,g \in T(V), \:\forall i,j\in \{1,\ldots,N\},f\epsilon_i\:_a\shuffle g\epsilon_j=a_i(f\shuffle g)\epsilon_j.$$
We put $\bullet=*+\:_a\shuffle$.
We prove in Theorem \ref{theo30} the equivalence of the three following conditions:
\begin{itemize}
\item $(T(V)^N,\bullet)$ is a pre-Lie algebra.
\item$(T(V)^N,\:_a[-,-],*)$ is a post-Lie algebra.
\item $F_1,\ldots,F_N$ defines a structure of $\g_a$-module on $V$.
\end{itemize}
If this holds, the construction of the first section allows to obtain two descriptions of the enveloping algebra of $\U(T(V)^N)$,
respectively coming from the post-Lie product $*$ and from the pre-Lie product $\bullet$: the extensions of $*$ and of $\bullet$ are respectively
described in Propositions \ref{prop32} and \ref{prop33}. It is shown in Proposition \ref{prop34} that the two associated
descriptions of $\U(T(V)^N)$ are equal. For $\siso$, we take $a=(1,0)$, $V=Vect(x_1,x_2)$ and:
\begin{align*}
F_1=&\left(\begin{array}{cc}
0&0\\0&1
\end{array}\right),&
F_2&=\left(\begin{array}{cc}
0&1\\0&0
\end{array}\right),
\end{align*}
which indeed define a $\g_{(1,0)}$-module. 
In order to relate this to the Hopf algebra $\h_{SISO}$ of \cite{Gray}, we need to consider the dual 
of the enveloping of $T(V)^N$. First, if $a=(1,0,\ldots,0)$, we observe that the decomposition of $V$ as a $\g_a$-module of the second section
induces a graduation of the post-Lie algebra $T(V)^N$ (Proposition \ref{prop35}), unfortunately not connected: the component of degree $0$
is $1$-dimensional, generated by $\emptyset \epsilon_1$. Forgetting this element, that is, considering the augmentation ideal
of the graded post-Lie algebra $T(V)^N$, we can dualize the product $\circledast$ of $S(T(V)^N)$ in order to obtain the coproduct of the dual Hopf algebra
in an inductive way. For $\siso$, we indeed obtain the inductive formulas of $\h_{SISO}$, finally proving that the dual Lie algebra of this Hopf algebra,
which in some sense can be exponentiated to $G_{SISO}$, is indeed post-Lie and pre-Lie. \\

\textbf{Aknowledgments.} The research leading these results was partially supported by the French National Research Agency under the reference
ANR-12-BS01-0017. \\

\textbf{Notations.} \begin{enumerate}
\item Let $\K$ be a commutative field. The canonical basis of $\K^n$ is denoted by $(\epsilon_1,\ldots,\epsilon_n)$.
\item For all $n\geq 1$, we denote by $[n]$ the set $\{1,\ldots,n\}$.
\item We shall use Sweeder's notations: if $C$ is a coalgebra and $x \in C$,
\begin{align*}
\Delta^{(1)}(x)=\Delta(x)&=\sum x^{(1)}\otimes x^{(2)},\\
\Delta^{(2)}(x)=(\Delta \otimes Id)\circ \Delta(x)&=\sum x^{(1)}\otimes x^{(2)}\otimes x^{(3)},\\
\Delta^{(3)}(x)=(\Delta \otimes Id \otimes Id) \circ (\Delta \otimes Id)\circ \Delta(x)&=\sum x^{(1)}\otimes x^{(2)}\otimes x^{(3)}\otimes x^{(4)},\\
&\vdots
\end{align*}\end{enumerate}

\section{Extension of a post-Lie product}

We first generalize the Oudom-Guin extension of a pre-Lie product in a post-Lie algebraic context, as done in \cite{Ebrahimi}. 
Let us first recall what a post-Lie algebra is.

\begin{definition}\begin{enumerate}
\item A (right) \emph{post-Lie algebra} is a family $(\g,\{-,-\},*)$, where $\g$ is a vector space, $\{-,-\}$ and $*$ are bilinear products on $\g$ such that:
\begin{itemize}
\item $(\g,\{-,-\})$ is a Lie algebra.
\item For all $x,y,z \in \g$:
\begin{align}
\label{EQ1} x*\{y,z\}&=(x*y)*z-x*(y*z)-(x*z)*y+x*(z*y),\\
\label{EQ2}\{x,y\}*z&=\{x*z,y\}+\{x,y*z\}.
\end{align}\end{itemize}
\item If $(\g,\{-,-\},*)$ is post-Lie, we define a second Lie bracket on $\g$:
$$\forall x,y\in \g,\: \{x,y\}_*=\{x,y\}+x*y-y*x.$$
\end{enumerate}\end{definition}

Note that if $\{-,-\}$ is $0$, then $(\g,*)$ is a (right) pre-Lie algebra, that is to say:
\begin{align}
\forall x,y,z\in \g,\: (x*y)*z-x*(y*z)=(x*z)*y-x*(z*y).
\end{align}


\subsection{Extension of a magmatic product}

Let $V$ be a vector space. We here use the tensor Hopf algebra $T(V)$. 
In this section, we shall denote the unit of $T(V)$ by $1$. 
Its product is the concatenation of words, and its coproduct $\Delta_{\shuffle}$ is the cocommutative deshuffling coproduct.
For example, if $x_1,x_2,x_3\in V$:
\begin{align*}
\Delta_{\shuffle}(x_1)&=x_1\otimes 1+1\otimes x_1,\\
\Delta_{\shuffle}(x_1x_2)&=x_1x_2\otimes 1+x_1\otimes x_2+x_2\otimes x_1+1\otimes x_1x_2,\\
\Delta_{\shuffle}(x_1x_2)&=x_1x_2x_3\otimes 1+x_1x_2\otimes x_3+x_1x_3\otimes x_2+x_2x_3\otimes x_1\\
&+x_1\otimes x_2x_3+x_2\otimes x_1x_3+x_3\otimes x_1x_2+1\otimes x_1x_2x_3.
\end{align*}
Its counit is denoted by $\varepsilon$: $\varepsilon(1)=1$ and if $k\geq 1$ and $x_1,\ldots,x_k\in V$, $\varepsilon(x_1\ldots x_k)=0$.

\begin{proposition}\label{prop2}
Let $V$ be a vector space and $*:V \otimes V \longrightarrow V$ be a magmatic product on $V$. 
Then $*$ can be uniquely extended as a map from $T(V) \otimes T(V)$ to $T(V)$ such that for all $f,g,h\in T(V)$, $x,y\in V$:
\begin{itemize}
\item $f*1=f$.
\item $1*f=\varepsilon(f)1$.
\item $x*(fy)=(x*f)*y-x*(f*y)$.
\item $(fg)*h=\sum \left(f*h^{(1)}\right)\left(g*h^{(2)}\right)$.
\end{itemize}\end{proposition}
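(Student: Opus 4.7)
I would prove existence and uniqueness simultaneously by induction on the length $|g|$ of the right-hand argument.

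For \textbf{uniqueness}, assume two extensions of $*$ both satisfy the four axioms. Axiom 1 fixes $W*1 = W$ for every word $W$. For $|g|\ge 1$, axiom 4 reduces $W*g$ to terms of the form $w_i * g^{(i)}$ with $w_i \in V$ a single letter and the $g^{(i)}$ coming from the iterated coproduct $\Delta_\shuffle^{(k-1)}(g)$; hence it suffices to pin down $x*g$ for $x\in V$. When $|g|=1$, i.e. $g=y\in V$, axiom 3 reduces tautologically and we are forced to use the given magmatic product. When $|g|\ge 2$, the word $g$ has a unique decomposition $g = g'y$ with $y\in V$ the last letter, and axiom 3 expresses $x*g$ in terms of $x*g'$, $g'*y$, and $x*(g'*y)$. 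Each of these right-hand arguments has length strictly less than $|g|$ (respectively $|g|-1$, $1$, and $|g|-1$, the last because $g'*y$ is a sum of words of length $|g'|$), so the induction closes and the value of $W*g$ is determined.

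For \textbf{existence}, I would take the very same formulae as the definition: base cases $W*1 = W$ and $x*y$ (the given magmatic product on $V$); recursion $x*(g'y) := (x*g')*y - x*(g'*y)$ for $x, y \in V$ and $|g'y|\ge 2$; and extension to arbitrary $W=w_1\cdots w_k$ via $W*g := \sum (w_1 * g^{(1)})\cdots (w_k * g^{(k)})$. Axioms 1 and 2 are built in, axiom 3 is the defining recursion (and tautological when $|g|=1$), and axiom 4 holds by definition when $|W|\ge 2$.

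The one genuine obstacle is checking that the extension via axiom 4 is well-posed, i.e. that $W*g$ does not depend on how $W$ is parsed as a product of two or more subwords. This is precisely coassociativity of $\Delta_\shuffle$: if one splits $W = (w_1\cdots w_m)(w_{m+1}\cdots w_k)$ and applies axiom 4 once, then expands each factor by axiom 4 a second time, the result involves $(\Delta_\shuffle^{(m-1)}\otimes \Delta_\shuffle^{(k-m-1)})\circ \Delta_\shuffle(g)$, which equals $\Delta_\shuffle^{(k-1)}(g)$ by coassociativity. Once this compatibility is in hand, the four axioms follow formally, and the intertwined inductions on $|g|$ (for the single-letter recursion) and on $|W|$ (for the concatenation extension) are well-founded because every recursive call on the right side of $*$ has strictly smaller length. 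I expect the bookkeeping to separate these two inductions cleanly to be the only delicate point; everything else is routine.
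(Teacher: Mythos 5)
Your proposal is correct and follows essentially the same route as the paper: define $x*g$ for $x\in V$ by induction on the length of $g$ via the relation $x*(g'y)=(x*g')*y-x*(g'*y)$, extend to arbitrary left arguments letter by letter through the iterated deshuffling coproduct, invoke coassociativity (and cocommutativity is not even needed here) to get the fourth axiom for an arbitrary two-part splitting, and run the same double induction for uniqueness. The only cosmetic difference is that you phrase the coassociativity step as a well-posedness check, while the paper fixes the letter-by-letter formula as the definition and then verifies the fourth axiom; the content is identical.
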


\begin{proof} \textit{Existence}. We first inductively extend $*$ from $V \otimes T(V)$ to $V$. If $n\geq 0$, $x$, $y_1,\ldots,y_n \in V$, we put:
$$x*y_1\ldots y_n=\begin{cases}x\mbox{ if }n=0,\\
x*y_1\mbox{ if }n=1,\\
\displaystyle \underbrace{\underbrace{(x*(y_1\ldots y_{n-1})}_{\in V}*
\underbrace{y_n}_{\in V}}_{\in V}-\sum_{i=1}^{n-1} \underbrace{x*(\underbrace{y_1 \ldots (y_i*y_n)\ldots y_{n-1})}
_{\in V^{\otimes (n-1)}}}_{\in V}\mbox{ if }n\geq 2.
\end{cases}$$
This product is then extended from $T(V) \otimes T(V)$ to $T(V)$ in the following way: 
\begin{itemize}
\item For all $f\in T(V)$, $1*f= \varepsilon(f)1$.
\item For all $n\geq 1$, for all $x_1,\ldots,x_n \in V$, $f\in T(V)$:
$$(x_1\ldots x_n )*f=\sum (\underbrace{x_1*f^{(1)}}_{\in V}) \ldots (\underbrace{x_n*f^{(n)}}_{\in V})\in V^{\otimes n}.$$
\end{itemize}
Note that for all $n \geq 0$, $V^{\otimes n}*T(V) \subseteq V^{\otimes n}$, which induces the second point. 
Let us prove the first point with $f=x_1\ldots x_n \in V^{\otimes n}$. If $n=0$, $f*1=1*1=\varepsilon(1)1=1=f$. If $n=1$, $f\in V$, 
so $f*1=f$ by definition of the extension of $*$ on $V \otimes T(V)$. If $n\geq 2$:
$$f*1=(x_1\ldots x_n)*1=(x_1*1)\ldots (x_n*1)=x_1\ldots x_n=f.$$
Let us prove the third point for $f=y_1\ldots y_n$. Then:
$$x*(fy)=(x*f)*y-\sum x*(y_1\ldots (y_i*y)\ldots y_n).$$
Moreover, as $\Delta_{\shuffle}(y)=y\otimes 1+1\otimes y$:
$$f*y=\sum_{i=1}^n (y_1*1)\ldots (y_i*y)\ldots (y_n*1)
=\sum_{i=1}^n y_1\ldots (y_i*y)\ldots y_n.$$
So $x*(fy)=(x*f)*y-x*(f*y)$. 
Let us finally prove the last point for $f=x_1\ldots x_k$ and $g=x_{k+1}\ldots x_{k+l}$. Then:
\begin{align*}
(fg)*h&=\sum \left(x_1*h^{(1)}\right)\ldots \left(x_{k+l}*h^{(k+l)}\right)\\
&= \sum \left(x_1*(h^{(1)})^{(1)}\right) \ldots \left(x_1*(h^{(1)})^{(k)}\right)
\left(x_{k+1}*(h^{(2)})^{(1)}\right) \ldots \left(x_{k+l}*(h^{(2)})^{(l)}\right)\\
&=\sum \left((x_1\ldots x_k)*h^{(1)}\right) \left((x_{k+1}\ldots x_{k+l})*h^{(2)}\right)\\
&=\sum \left(f*h^{(1)}\right)\left(g*h^{(2)}\right).
\end{align*}
We used the coassociativity of $\Delta_{\shuffle}$ for the second equality. \\

\textit{Unicity}. The first and third points uniquely determine $x*(x_1\ldots x_n)$ for $x,x_1,\ldots, x_n \in V$, by induction on $n$;
the second and fourth points then uniquely determine $f*(x_1\ldots x_n)$ for all $f\in T(V)$ by induction on the length of $f$. \end{proof}\\

\textbf{Examples.} If $x_1,x_2,x_3,x_4 \in V$:
\begin{align*}
(x_1x_2)*x_3&=(x_1*x_3) x_2+x_1(x_2*x_3),\\
x_1*(x_2x_3)&=(x_1*x_2)*x_3-x_1*(x_2*x_3),\\
(x_1x_2x_3)*x_4&=(x_1*x_4)x_2x_3+x_1(x_2*x_4)x_3+x_1x_2(x_3*x_4),\\
(x_1x_2)*(x_3x_4)&=((x_1*x_3)*x_4)x_2-(x_1*(x_3*x_4))x_2+x_1((x_2*x_3)*x_4),\\
&-x_1(x_2*(x_3*x_4))+(x_1*x_3)(x_2*x_4)+(x_1*x_4)(x_2*x_3),\\
x_1*(x_2x_3x_4)&=((x_1*x_2)*x_3)*x_4-(x_1*(x_2*x_3))*x_4-(x_1*(x_2*x_4))*x_3\\
&+x_1*((x_2*x_4)*x_3)-(x_1*x_2)*(x_3*x_4)+x_1*(x_2*(x_3*x_4)).
\end{align*}

\begin{lemma}\label{lem3} \begin{enumerate}
\item For all $k \in \mathbb{N}$, $V^{\otimes k}*T(V) \subseteq V^{\otimes k}$.
\item For all $f,g\in T(V)$, $\varepsilon (f*g)=\varepsilon(f)\varepsilon(g)$.
\item For all $f,g \in T(V)$, $\Delta_{\shuffle}(f*g)=\Delta_{\shuffle}(f)*\Delta_{\shuffle}(g)$.
\item For all $f,g \in T(V)$, $y\in V$, $f*(gy)=(f*g)*y-f*(g*y)$.
\item For all $f,g,h \in T(V)$, $(f*g)*h=\sum f*\left(\left(g*h^{(1)}\right) h^{(2)}\right)$.
\end{enumerate}\end{lemma}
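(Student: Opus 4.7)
The five claims build on one another, so I would prove them in the given order.

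Points (1) and (2) are essentially bookkeeping consequences of the construction of $*$. Point (1) is in fact already visible in the existence argument of Proposition \ref{prop2}: the inductive formula for $x*y_1\ldots y_n$ keeps the result in $V$, and then $(x_1\ldots x_k)*f = \sum (x_1*f^{(1)})\ldots (x_k*f^{(k)})$ stays in $V^{\otimes k}$. Point (2) follows by splitting on whether $f=1$ (so $f*g=\varepsilon(g)\cdot 1$) or $f\in V^{\otimes k}$ with $k\ge 1$ (where (1) forces $f*g\in V^{\otimes k}$ to have vanishing counit).

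For (3), I would introduce the componentwise product $\tilde{*}$ on $T(V)\otimes T(V)$ defined by $(a\otimes b)\tilde{*}(c\otimes d)=(a*c)\otimes (b*d)$, and prove $\Delta_{\shuffle}(f*g)=\Delta_{\shuffle}(f)\tilde{*}\Delta_{\shuffle}(g)$ by induction on the length of $f$. The cases $f=1$ and $f\in V$ reduce, via (1), (2), and the identity $\sum g^{(1)}\varepsilon(g^{(2)})=g$, to the fact that an element of $V$ has only primitive part in its coproduct. For the inductive step, writing $f=f_1f_2$ with both factors nonempty, expand $\Delta_{\shuffle}(f*g)$ using property (iv) of Proposition \ref{prop2} together with multiplicativity of $\Delta_{\shuffle}$, and apply induction to each piece. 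What makes the two sides match is that $\tilde{*}$ satisfies the analogue of (iv) with respect to the tensor product coproduct on $T(V)\otimes T(V)$, which is a direct check.

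Claim (4) is then an induction on the length of $f$. The case $f=1$ reduces, via (2), to $\varepsilon(gy)=0=\varepsilon(g*y)$, and the case $f\in V$ is property (iii) of Proposition \ref{prop2}. For $f=f_1f_2$ with nonempty factors, one expands $\Delta_{\shuffle}(gy)=\sum g^{(1)}y\otimes g^{(2)}+g^{(1)}\otimes g^{(2)}y$, applies (iv) and the inductive hypothesis to each subterm $f_i*(g^{(j)}y)$, and matches the output against $(f*g)*y-f*(g*y)$. The first term is expanded via (iv) with $\Delta_{\shuffle}(y)=y\otimes 1+1\otimes y$, and the second requires point (3) to write $\Delta_{\shuffle}(g*y)=\sum(g^{(1)}*y)\otimes g^{(2)}+g^{(1)}\otimes(g^{(2)}*y)$; this is the key place where (3) enters.

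Finally, (5) is an induction on the length of $h$. The case $h=y\in V$ is a direct rewriting of (4). For $h=h'y$ with $|h'|\ge 1$, apply (4) to the left-hand side $(f*g)*(h'y)=((f*g)*h')*y-(f*g)*(h'*y)$ and use the inductive hypothesis on both subterms, noting that $|h'*y|=|h'|<|h|$ by (1). The main obstacle is the bookkeeping required to match, after expanding $\Delta_{\shuffle}(h'*y)$ via (3) and $\Delta_{\shuffle}(h'y)$ via multiplicativity, the terms arising from $(f*A)*y=f*(Ay)+f*(A*y)$ with $A=(g*(h')^{(1)})(h')^{(2)}$ against those appearing on the right-hand side $\sum f*((g*h^{(1)})h^{(2)})$. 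The algebraic identity that drives the matching is the expansion of $A*y$ via property (iv), which produces the missing terms, while two terms of the form $f*((g*((h')^{(1)}*y))(h')^{(2)})$ cancel between the two sides.
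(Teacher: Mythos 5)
Your proposal is correct and follows essentially the same route as the paper: points (1) and (2) read off from the construction, points (3) and (4) by induction on the length of $f$ using the defining properties of Proposition \ref{prop2} (with (3) needed to expand $\Delta_{\shuffle}(g*y)$ in (4)), and point (5) by induction on the length of $h$, peeling off the last letter and using (1) to control the length of $h'*y$. The only cosmetic difference is that in (3) you split $f$ into two arbitrary nonempty factors rather than isolating the last letter, which changes nothing.
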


\begin{proof} 1. This was observed in the proof of Proposition \ref{prop2}.\\

2. From the first point, $Ker(\varepsilon)*T(V)+T(V)*Ker(\varepsilon)\subseteq Ker(\varepsilon)$,
so if $\varepsilon(f)=0$ or $\varepsilon(g)=0$, then $\varepsilon(f*g)=0$. As $\varepsilon(1* 1)=1$, the second point holds for all $f,g$. \\

3. We prove it for $f=x_1\ldots x_n$, by induction on $n$. If $n=0$, then $f=1$. Moreover, 
$\Delta_{\shuffle}(1*g)=\varepsilon(g) \Delta_{\shuffle}(1)=\varepsilon(g) 1\otimes 1$, and:
$$\Delta_{\shuffle}(f)*\Delta_{\shuffle}(g)=\sum 1*g^{(1)}\otimes 1*g^{(2)}=\varepsilon\left(g^{(1)}\right)\varepsilon\left(g^{(2)}\right)1\otimes 1=\varepsilon(g)1\otimes 1.$$
If $n=1$, then $f\in V$. In this case, from the second point, $f*g \in V$, so $\Delta_{\shuffle}(f*g)=f*g \otimes 1+1\otimes f*g$. Moreover:
\begin{align*}
\Delta_{\shuffle}(f)*\Delta_{\shuffle}(g)&=(f\otimes 1+1\otimes f)*\Delta_{\shuffle}(g)\\
&=\sum f*g^{(1)} \otimes 1*g^{(2)}+\sum 1*g^{(1)}\otimes f*g^{(2)}\\
&=\sum f*g^{(1)} \otimes  \varepsilon\left(g^{(2)}\right)1+\sum \varepsilon\left(g^{(1)}\right)1\otimes f*g^{(2)}\\
&=f*g\otimes 1+1\otimes f*g.
\end{align*}
If $n\geq 2$, we put $f_1=x_1\ldots x_{n-1}$ and $f_2=x_n$.  By the induction hypothesis applied to $f_1$:
\begin{align*}
\Delta_{\shuffle}(f*g)&=\sum \Delta_{\shuffle}\left(\left(f_1*g^{(1)}\right)\left (f_2*g^{(2)}\right)\right)\\
&=\Delta_{\shuffle}\left(f_1*g^{(1)}\right)\Delta_{\shuffle}\left(f_2*g^{(2)}\right)\\
&=\sum \left(f_1^{(1)}*(g^{(1)})^{(1)}\right) \left(f_2^{(1)}*(g^{(2)})^{(1)}\right)
 \otimes \left(f_1^{(2)}*(g^{(1)})^{(2)}\right) \left(f_2^{(2)}*(g^{(2)})^{(2)}\right) \\
&=\sum (f_1f_2)^{(1)}*g^{(1)} \otimes (f_1f_2)^{(2)}*g^{(2)}\\
&=\Delta_{\shuffle}(f)*\Delta_{\shuffle}(g).
\end{align*}
We used the cocommutativity of $\Delta_{\shuffle}$ for the fourth equality.\\

4. We prove it for $f=x_1\ldots x_n$, by induction on $n$. If $n=0$, then $f=1$ and:
$$1*(gy)=(1*g)*y-1*(g*y)=\varepsilon(g)\varepsilon(y)-\varepsilon(g*y)=0.$$
For $n=1$, this comes immediately from Proposition \ref{prop2}-3. 
If $n \geq 2$, we put $f_1=x_1\ldots x_{n-1}$ and $f_2=x_n$. The induction hypothesis holds for $f_1$. Moreover:
\begin{align*}
f*(gy)&=\sum \left(f_1*g^{(1)}\right)\left(f_2*\left(g^{(2)}y\right)\right)+\sum\left(f_1*\left(g^{(1)}y\right)\right)\left(f_2*g^{(2)}\right)\\
&=\sum \left(f_1*g^{(1)}\right)\left(\left(f_2*g^{(2)}\right)*y\right)-\sum\left(f_1*g^{(1)}\right)\left(f_2*\left(g^{(2)}*y\right)\right)\\
&+\sum\left(\left(f_1*g^{(1)}\right)*y\right)\left(f_2*g^{(2)}\right)-\sum\left(f_1*\left(g^{(1)}*y\right)\right)\left(f_2*g^{(2)}\right), \\ 
\left(f*g\right)*y&=\sum\left(\left(f_1*g^{(1)}\right)\left(f_2*g^{(2)}\right)\right)*y\\
&=\sum\left(\left(f_1*g^{(1)}\right)*y\right)\left(f_2*g^{(2)}\right)+\sum\left(f_1*g^{(1)}\right)\left(\left(f_2*g^{(2)}\right)*y\right),\\ 
f*(g*y)&=\sum\left(f_1*\left(g*y\right)^{(1)}\right)\left(f_2*\left(g*y\right)^{(2)}\right)\\
&=\sum\left(f_1*\left(g^{(1)}*y\right)\right)\left(f_2*g^{(2)}\right)+\sum\left(f_1*g^{(1)}\right)\left(f_2*\left(g^{(2)}*y\right)\right).
\end{align*}
We use the third point for the third computation. So the result holds for all $f$.\\

5. We prove this for $h=z_1\ldots z_n$ and we proceed by induction on $n$. If $n=0$, then $h=1$ and
$(f*g)*1=f*g$. Moreover, $\sum f*\left(\left(g*h^{(1)}\right) h^{(2)}\right)=f*((g*1) 1)=(f*g) 1=f*g$.
If $n=1$, then $h \in V$, so $\Delta_{\shuffle}(h)=h\otimes 1+1\otimes h$. So:
\begin{align*}
\sum f*\left(\left(g*h^{(1)}\right)h^{(2)}\right)&=f*((g*h)1)+f*((g*1)h)\\
&=f*(g*h)+f*(gh)\\
&=f*(g*h)+(f*g)*h-f*(g*h)\\
&=(f*g)*h.
\end{align*}
We use Proposition \ref{prop2}-3 for the third equality. If $n \geq 2$, we put $h_1=z_1\ldots z_{n-1}$ and $h_2=z_n$.
From the fourth point:
\begin{align*}
(f*g)*h&=((f*g)*h_1)*h_2-(f*g)*(h_1*h_2)\\
&=\sum \left(f*\left( \left(g*h_1^{(1)}\right)h_1^{(2)}\right)\right)*h_2-\sum f*\left(\left(g*\left(h_1*h_2\right)^{(1)}\right) \left(h_1*h_2\right)^{(2)}\right)\\
&=\sum f*\left(\left(\left(g*h_1^{(1)}\right)h_1^{(2)}\right)*h_2\right)+\sum f*\left(\left(g*h_1^{(1)}\right)h_1^{(2)} h_2\right)\\
&-\sum f*\left(\left(g*\left(h_1^{(1)}*h_2^{(1)}\right)\right) \left(h_1^{(2)}*h_2^{(2)}\right)\right)\\
&=\sum f*\left(\left(\left(g*h_1^{(1)}\right)*h_2\right)h_1^{(2)}\right)+\sum f*\left(\left(g*h_1^{(1)}\right)\left(h_1^{(2)}*h_2\right)\right)\\
&+\sum f*\left(\left(g*h_1^{(1)}\right)h_1^{(2)} h_2\right)-\sum f*\left(\left(g*\left(h_1^{(1)}*h_2\right)\right) h_1^{(2)} \right)\\
&-\sum f*\left(\left(g*h_1^{(1)} \right) \left(h_1^{(2)}*h_2\right)\right)\\
&=\sum f*\left(\left(g*\left(h_1^{(1)}*h_2\right)\right)h_1^{(2)}\right)+\sum f*\left(\left(g*\left(h_1^{(1)} h_2\right)\right)h_1^{(2)}\right)\\
&+\sum f*\left(\left(g*h_1^{(1)}\right)\left(h_1^{(2)}*h_2\right)\right)+\sum f*\left(\left(g*h_1^{(1)}\right)h_1^{(2)} h_2\right)\\
&-\sum f*\left(\left(g*\left(h_1^{(1)}*h_2\right)\right) h_1^{(2)} \right)-\sum f*\left(\left(g*h_1^{(1)} \right) \left(h_1^{(2)}*h_2\right)\right)\\
&=\sum f*\left(\left(g*\left(h_1^{(1)} h_2\right)\right)h_1^{(2)}\right)+\sum f*\left(\left(g*h_1^{(1)}\right)h_1^{(2)} h_2\right).
\end{align*}
For the second equality, we used the induction hypothesis on $h_1$ and $h_1*h_2 \in V^{\otimes (k-1)}$ by the first point;
we used the third point for the third equality. As $\Delta_{\shuffle}(h_2)=h_2\otimes 1+1\otimes h_2$, 
$\Delta_{\shuffle}(h)=\sum h_1^{(1)}h_2\otimes h_1^{(2)}+\sum h_1^{(1)} \otimes h_1^{(2)}h_2$, so the result holds for $h$. \end{proof}

\subsection{Associated Hopf algebra and post-Lie algebra}

\begin{theorem} \label{theo4}
Let $*$ be a magmatic product on $V$. This product is extended to $T(V)$ by Proposition \ref{prop2}.
We define a product $\circledast$ on $T(V)$ by:
$$\forall f,g\in T(V),\:f\circledast g=\sum \left(f*g^{(1)}\right) g^{(2)}.$$
Then $(T(V),\circledast,\Delta_{\shuffle})$ is a Hopf algebra.
\end{theorem}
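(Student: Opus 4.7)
The plan is to verify the bialgebra axioms for $(T(V), \circledast, \Delta_{\shuffle})$ one at a time, using Lemma \ref{lem3} as the main toolkit, and then invoke a connected-filtration argument to produce the antipode.

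The unit and counit checks are immediate. For the unit, $f \circledast 1 = (f*1) \cdot 1 = f$ by Proposition \ref{prop2}, while $1 \circledast g = \sum \varepsilon(g^{(1)}) g^{(2)} = g$ using $1*g = \varepsilon(g) 1$ together with the counit axiom. That $\varepsilon$ is an algebra morphism for $\circledast$ is similarly short: $\varepsilon(f \circledast g) = \sum \varepsilon(f*g^{(1)}) \varepsilon(g^{(2)}) = \varepsilon(f) \varepsilon(g)$ by Lemma \ref{lem3}-2 combined with the counit axiom.

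The heart of the argument is associativity. Expanding the left-hand side and using Proposition \ref{prop2}-4 together with coassociativity of $\Delta_{\shuffle}$ (to bundle two nested coproducts of $h$ into one three-fold coproduct) yields
$$(f \circledast g) \circledast h = \sum \bigl((f*g^{(1)})*h^{(1)}\bigr)\bigl(g^{(2)}*h^{(2)}\bigr) h^{(3)}.$$
For the right-hand side, I expand $\Delta_{\shuffle}((g*h^{(1)})h^{(2)})$ using that $\Delta_{\shuffle}$ is an algebra morphism for concatenation together with Lemma \ref{lem3}-3, and re-bundle via coassociativity to obtain a sum indexed by $\Delta^{(3)}(h)$:
$$f \circledast (g \circledast h) = \sum \bigl(f * \bigl((g^{(1)}*h^{(1)})h^{(3)}\bigr)\bigr)\bigl(g^{(2)}*h^{(2)}\bigr) h^{(4)}.$$
Cocommutativity of $\Delta_{\shuffle}$ allows me to swap the Sweedler indices $h^{(2)}$ and $h^{(3)}$, so that the two copies of $h$ entering the innermost $*$ appear as the two halves of a single coproduct. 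At that point Lemma \ref{lem3}-5, which was engineered precisely for this, collapses $f*((b*c^{(1)})c^{(2)})$ back to $(f*b)*c$, and both sides coincide.

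Compatibility of $\Delta_{\shuffle}$ with $\circledast$ follows the same template: using that $\Delta_{\shuffle}$ is an algebra morphism for concatenation and then Lemma \ref{lem3}-3, I rewrite $\Delta_{\shuffle}(f \circledast g)$ as a sum over $\Delta^{(3)}(g)$, and by a cocommutative reordering it matches $\Delta_{\shuffle}(f) \circledast \Delta_{\shuffle}(g)$ computed componentwise in $T(V) \otimes T(V)$. Finally, the length filtration $F_n = \bigoplus_{k \leq n} V^{\otimes k}$ is preserved by $\Delta_{\shuffle}$ and, thanks to Lemma \ref{lem3}-1 ($V^{\otimes m} * T(V) \subseteq V^{\otimes m}$), by $\circledast$; since $F_0 = \K \cdot 1$, the bialgebra is filtered and connected, so the antipode exists via the standard recursive formula. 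The main obstacle throughout is the Sweedler bookkeeping in the associativity step: one must permute four nested coproduct components of $h$ into precisely the shape demanded by Lemma \ref{lem3}-5.
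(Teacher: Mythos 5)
Your proof is correct and follows essentially the same route as the paper: the same unit checks, the same Sweedler expansion of both sides of associativity into sums over $\Delta^{(3)}(h)$ reconciled by cocommutativity and Lemma \ref{lem3}-5, and the same use of Lemma \ref{lem3}-3 for the compatibility of $\Delta_{\shuffle}$ with $\circledast$. The only difference is that you additionally make explicit the existence of the antipode via the connected length filtration (using Lemma \ref{lem3}-1), a point the paper leaves implicit.
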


\begin{proof} For all $f\in T(V)$:
\begin{align*}
1 \circledast f&\sum \left(1*f^{(1)}\right) f^{(2)}=\sum \varepsilon \left(f^{(1)}\right)f^{(2)}=f;&\circledast 1=(f*1)1=f.
\end{align*}
For all $f,g,h \in T(V)$, by Lemma \ref{lem3}-5:
\begin{align*}
(f \circledast g)\circledast h&=\sum \left(\left(f*g^{(1)}\right)g^{(2)}\right)\circledast h\\
&=\sum\left(\left(\left(f*g^{(1)}\right)g^{(2)}\right)*h^{(1)}\right)h^{(2)}\\
&=\sum \left(\left(f*g^{(1)}\right)*h^{(1)}\right)\left(g^{(2)}*h^{(2)}\right) h^{(3)}\\
&=\sum \left(f*\left(\left(g^{(1)}*h^{(1)}\right)h^{(2)}\right)\right)\left(g^{(2)}*h^{(3)}\right) h^{(4)};\\
f \circledast (g \circledast h)&=\sum f\circledast \left(\left(g*h^{(1)}\right) h^{(2)}\right)\\
&=\sum \left(f*\left(\left(g^{(1)}*h^{(1)}\right)h^{(3)}\right)\right)\left(g^{(2)}*h^{(2)}\right) h^{(4)}.
\end{align*}
As $\Delta_{\shuffle}$ is cocommutative, $(f \circledast g)\circledast h=f\circledast (g\circledast h)$, so $(T(V),\circledast)$ is a unitary, associative algebra.\\

For all $f,g \in T(V)$, by Lemma \ref{lem3}-3:
\begin{align*}
\Delta_{\shuffle}(f \circledast g)&=\sum \Delta_{\shuffle}\left(\left(f*g^{(1)}\right)g^{(2)}\right)\\
&=\sum \left(f^{(1)}*\left(g^{(1)}\right)^{(1)}\right) \left(g^{(2)}\right)^{(1)}\otimes \left(f^{(2)}*\left(g^{(1)}\right)^{(2)}\right)\left(g^{(2)}\right)^{(2)}\\
&=\sum \left(f^{(1)}*\left(g^{(1)}\right)^{(1)}\right)\left(g^{(1)}\right)^{(2)} \otimes \left(f^{(2)}*\left(g^{(2)}\right)^{(1)}\right)\left(g^{(2)}\right)^{(2)}\\
&=\sum f^{(1)}\circledast g^{(1)} \otimes f^{(2)}\circledast g^{(2)}.
\end{align*}
Note that we used the cocommutativity of $\Delta_{\shuffle}$ for the third equality. Hence, $(T(V), \circledast,\Delta_{\shuffle})$ is a Hopf algebra. 
\end{proof}\\

\textbf{Remark.} By Lemma \ref{lem3}:
\begin{itemize}
\item For all $f,g,h\in T(V)$, $(f*g)*h=f*(g\circledast h)$: $(T(V),*)$ is a right $(T(V),\circledast)$-module.
\item By restriction, for all $n\geq 0$, $(V^{\otimes n},*)$ is a right $(T(V),\circledast)$-module.
Moreover, for all $n\geq 0$, $(V^{\otimes n},*)=(V,*)^{\otimes n}$ as a right module over the Hopf algebra $(T(V), \circledast,\Delta_{\shuffle})$.
\end{itemize}

 \textbf{Examples.} Let $x_1,x_2,x_3 \in V$.
 \begin{align*}
 x_1\circledast x_2&=x_1x_2+x_1*x_2\\
 x_1\circledast x_2x_3&=x_1x_2x_3+(x_1*x_2)x_3+(x_1*x_3)x_2+(x_1*x_2)*x_3-x_1*(x_2*x_3)\\
 x_1x_2\circledast x_3&=x_1x_2x_3+(x_1*x_3)x_2+x_1(x_2*x_3).
 \end{align*}
 
The vector space of primitive elements of $(T(V), \circledast,\Delta_{\shuffle})$ is $\Lie(V)$.
Let us now describe the Lie bracket induced on $\Lie(V)$ by $\circledast$.
 
\begin{proposition} \label{prop5}
\begin{enumerate}
\item Let $*$ be a magmatic product on $V$. The Hopf algebras $(T(V),\circledast,\Delta_{\shuffle})$ and $(T(V),.,\Delta_{\shuffle})$ are isomorphic,
via the following algebra morphism:
$$\phi_*:\left\{\begin{array}{rcl}
(T(V),.,\Delta_{\shuffle})&\longrightarrow&(T(V),\circledast,\Delta_{\shuffle})\\
x_1\ldots x_k\in V^{\otimes k}&\longrightarrow&x_1\circledast \ldots \circledast x_k.
\end{array}\right.$$
\item $\Lie(V)*T(V)\subseteq \Lie(V)$. Moreover, $(\Lie(V),[-,-],*)$ is a post-Lie algebra.
The induced Lie bracket on $\Lie(V)$ is denoted by $\{-,-\}_*$:
$$\forall f,g\in \Lie(V),\: \{f,g\}_*=[f,g]+f*g-g*f=fg-gf+f*g-g*f.$$
The Lie algebra $(\Lie(V),\{-,-\}_*)$ is isomorphic to $\Lie(V)$.
\end{enumerate}\end{proposition}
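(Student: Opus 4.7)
For the first point, I would appeal to the universal property of $(T(V),.)$ as the free associative algebra on $V$: this furnishes a unique algebra morphism $\phi_*:(T(V),.)\longrightarrow(T(V),\circledast)$ that is the identity on $V$, and this morphism necessarily sends $x_1\ldots x_k$ to $x_1\circledast\cdots\circledast x_k$. To prove bijectivity, I would filter $T(V)$ by length: by Lemma \ref{lem3}-1, $V^{\otimes k}*T(V)\subseteq V^{\otimes k}$, so by unfolding the definition of $\circledast$ the element $x_1\circledast\cdots\circledast x_k$ equals $x_1\ldots x_k$ plus a sum of tensors of strictly smaller length. Hence $\phi_*$ is unitriangular, hence a linear bijection. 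To upgrade it to a Hopf algebra isomorphism, I would verify that $\phi_*$ commutes with $\Delta_\shuffle$: since $\Delta_\shuffle$ is an algebra morphism for $.$ trivially and for $\circledast$ by Theorem \ref{theo4}, the maps $\Delta_\shuffle\circ\phi_*$ and $(\phi_*\otimes\phi_*)\circ\Delta_\shuffle$ are both algebra morphisms from $(T(V),.)$ to $(T(V)\otimes T(V),\circledast\otimes\circledast)$ and they coincide on the generators $x\in V$, where both return $x\otimes 1+1\otimes x$.

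For the second point, the inclusion $\Lie(V)*T(V)\subseteq\Lie(V)$ falls out of Lemma \ref{lem3}-3 combined with Lemma \ref{lem3}-2: for $f\in\Lie(V)$ primitive and $g\in T(V)$, expanding $\Delta_\shuffle(f)=f\otimes 1+1\otimes f$ in $\Delta_\shuffle(f*g)=\Delta_\shuffle(f)*\Delta_\shuffle(g)$ and collapsing the counital factors $1*g^{(i)}=\varepsilon(g^{(i)})1$ yields $\Delta_\shuffle(f*g)=f*g\otimes 1+1\otimes f*g$. I would then verify the two post-Lie axioms on $(\Lie(V),[-,-],*)$. For \eqref{EQ1}, I would apply Lemma \ref{lem3}-5 with $g=y$ and $h=z\in\Lie(V)$ primitive; since $\Delta_\shuffle(z)=z\otimes 1+1\otimes z$, the sum $\sum x*\bigl((y*z^{(1)})z^{(2)}\bigr)$ collapses to $x*(y*z)+x*(yz)$, giving $(x*y)*z=x*(y*z)+x*(yz)$. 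Subtracting the analogous identity with $y,z$ swapped produces $(x*y)*z-x*(y*z)-(x*z)*y+x*(z*y)=x*(yz-zy)=x*[y,z]$. For \eqref{EQ2}, I would expand $[x,y]*z=(xy)*z-(yx)*z$ using the Leibniz-type rule $(fg)*h=\sum(f*h^{(1)})(g*h^{(2)})$; primitivity of $z$ gives $(xy)*z=(x*z)y+x(y*z)$ and similarly for $(yx)*z$, and combining both produces $[x,y]*z=[x*z,y]+[x,y*z]$.

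The remaining isomorphism statement follows from part 1 by a short computation: for primitive $f,g$, the definition of $\circledast$ together with $\Delta_\shuffle(g)=g\otimes 1+1\otimes g$ gives $f\circledast g=fg+f*g$, hence the commutator bracket of $\circledast$ on $\Lie(V)$ is $\{f,g\}_*=[f,g]+f*g-g*f$. Since $\phi_*$ from part 1 is a Hopf algebra isomorphism, it restricts to a bijection between the spaces of primitive elements of the two Hopf structures (which coincide, as sets, with $\Lie(V)$) and is there a Lie algebra isomorphism from $(\Lie(V),[-,-])$ to $(\Lie(V),\{-,-\}_*)$.

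The main obstacle I anticipate is the verification of \eqref{EQ1}: one has to choose exactly the right specialization of Lemma \ref{lem3}-5 (taking $h$ primitive rather than $g$ or $f$) so that the tree of iterated products collapses to expressions of the form $x*(y*z)$ and $x*(yz)$; once this is set up, the antisymmetry in $(y,z)$ does the rest. The bijectivity of $\phi_*$ is purely formal once the length filtration is in hand, axiom \eqref{EQ2} is a direct Leibniz computation, and the final Lie isomorphism is immediate from the Hopf-level statement.
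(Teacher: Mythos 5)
Your proof is correct and follows essentially the same route as the paper: the same unitriangular $\phi_*$ built from the universal property and Lemma \ref{lem3}-1, the same specialization of Lemma \ref{lem3}-5 at a primitive third argument for axiom \eqref{EQ1}, the same Leibniz computation for \eqref{EQ2}, and the same commutator computation identifying $\{-,-\}_*$ and transporting the Lie structure along $\phi_*$. The only (harmless) divergence is the stability $\Lie(V)*T(V)\subseteq \Lie(V)$, which you get in one line from Lemma \ref{lem3}-2,3 by using that $\Lie(V)$ is exactly the set of primitives of $\Delta_{\shuffle}$, whereas the paper avoids invoking that identification at this point and instead shows that $\{f\in\Lie(V)\mid f*T(V)\subseteq\Lie(V)\}$ is a Lie subalgebra containing $V$.
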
 

\begin{proof} 1. There exists a unique algebra morphism $\phi_*:(T(V),.)\longrightarrow (T(V),\circledast)$,
sending any $x\in V$ on itself. As the elements of $V$ are primitive in both Hopf algebras, $\phi_*$ is a Hopf algebra morphism.
As $V^{\otimes k}*T(V)\subseteq V^{\otimes k}$ for all $k\geq 0$, we deduce that for all $x_1,\ldots,x_{k+l} \in V$:
$$x_1\ldots x_k\circledast x_{k+1}\ldots x_{k+l}=x_1\ldots x_{k+l}+\mbox{a sum of words of length $<k+l$}.$$
Hence, if $x_1,\ldots,x_k \in V$:
$$\phi_*(x_1\ldots x_k)=x_1\circledast\ldots \circledast x_k=x_1\ldots x_k+\mbox{a sum of words of length $<k$}.$$
Consequently:
\begin{itemize}
\item If $k\geq 0$ and $x_1,\ldots,x_k\in V$, an induction on $k$ proves that $x_1\ldots x_k\in \phi_*(T(V))$, so $\phi_*$ is surjective.
\item If $f$ is a nonzero element of $T(V)$, let us write $f=f_0+\ldots+f_k$, with $f_i\in V^{\otimes i}$ for all $i$ and $f_k\neq 0$.
Then:
$$\phi_*(f)=f_k+\mbox{terms in }\K\oplus \ldots \oplus V^{\otimes (k-1)},$$
so $\phi_*(f)\neq 0$: $\phi_*$ is injective.
\end{itemize}
Hence, $\phi_*$ is an isomorphism. \\

2.  We consider $A=\{f\in \Lie(V)\mid f*T(V)\subseteq \Lie(V)\}$.
By Lemma \ref{lem3}-3, $V\subseteq A$. Let $f,g \in A$. For all $h\in T(V)$:
\begin{align*}
[f,g]*h&=(fg)*h-(gf)*h\\
&=\sum \left(f*h^{(1)}\right)\left(g*h^{(2)}\right)-\sum\left(g*h^{(1)}\right)\left(f*h^{(2)}\right)\\
&=\sum \left(f*h^{(1)}\right)\left(g*h^{(2)}\right)-\sum\left(g*h^{(2)})(f*h^{(1)}\right)\\
&=\sum\left[f*h^{(1)},g*h^{(2)}\right].
\end{align*}
We used the cocommutativity for the third equality. By hypothesis, $f*h^{(1)}$, $g*h^{(2)} \in \Lie(V)$,
so $[f,g]\in A$. As $A$ is a Lie subalgebra  of $\Lie(V)$ containing $V$, it is equal to $\Lie(V)$.\\

Let $f,g,h \in \Lie(V)$. Then $g \circledast h=\sum\left(g*h^{(1)}\right)h^{(2)}=gh+g*h$. 
Similarly, $\sum\left(h*g^{(1)}\right)g^{(2)}=hg+h*g$, so, by Lemma \ref{lem3}-5:
\begin{align*}
f*[g,h]&=f*(gh)-f*(hg)\\
&=\sum f*\left(\left(g*h^{(1)}\right)h^{(2)}\right)-f*(g*h)-\sum f*\left(\left(h*g^{(1)}\right)g^{(2)}\right)+f*(h*g)\\
&=(f*g)*h-f*(g*h)-(f*h)*g+f*(g*h).
\end{align*}
Moreover:
\begin{align*}
[f,g]*h&=(fg)*h-(gf)*h\\
&=(f*h)g+f(g*h)-(g*h)f-g(f*h)\\
&=[f*h,g]+[f,g*h].
\end{align*}
So $\Lie(V)$ is a post-Lie algebra. \\

Consequently, $\{-,-\}_*$ is a second Lie bracket on $\Lie(V)$. In $(T(V),\circledast)$, if $f$ and $g$ are primitive:
$$f\circledast g-g\circledast f=fg+f*g-gf-g*f=\{f,g\}_*.$$
So, by the Cartier-Quillen-Milnor-Moore's theorem, $(T(V),\circledast,\Delta_{\shuffle})$ is the enveloping algebra of $(\Lie(V),\{-,-\}_*)$.
As it is isomorphic to the enveloping algebra of $\Lie(V)$, namely $(T(V),.,\Delta_{\shuffle})$, these two Lie algebras are isomorphic. \end{proof}\\

Let us give a combinatorial description of $\phi_*$.

\begin{proposition} Let $(V,*)$ be a magmatic algebra, and $x_1,\ldots,x_k\in V$.
\begin{itemize}
\item Let $I=\{i_1,\ldots,i_p\}\subseteq [k]$, with $i_1<\ldots<i_p$. We put:
$$x_I^*=(\ldots ((x_{i_1}*x_{i_2})*x_{i_3})*\ldots)*x_{i_p} \in V.$$
\item Let $P$ be a partition of $[p]$. We denote it by $P=\{P_1,\ldots,P_p\}$, with the convention $\min(P_1)<\ldots<\min(P_p)$.
We put:
$$x_P^*=x_{P_1}^* \ldots x_{P_p}^*\in V^{\otimes p}.$$
\end{itemize} 
Then:
$$\phi^*(x_1\ldots x_k)=\sum_{\mbox{\scriptsize $P$ partition of $[k]$}} x_P^*.$$
\end{proposition}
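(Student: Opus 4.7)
The plan is to induct on $k$, using the recursion $\phi_*(x_1\ldots x_{k+1})=\phi_*(x_1\ldots x_k)\circledast x_{k+1}$ and the fact that $x_{k+1}$ is primitive for $\Delta_{\shuffle}$. The base cases $k=0$ (the empty word corresponds to the empty partition, both giving $1$) and $k=1$ (only partition is $\{\{1\}\}$, with $x_{\{1\}}^*=x_1$) are trivial.

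For the inductive step, assume the formula holds for $k$. Since $\Delta_{\shuffle}(x_{k+1})=x_{k+1}\otimes 1+1\otimes x_{k+1}$, the definition of $\circledast$ gives, for any partition $P=\{P_1,\ldots,P_p\}$ of $[k]$,
\[
x_P^*\circledast x_{k+1}=x_P^*\, x_{k+1}+x_P^**x_{k+1}.
\]
The first term corresponds to adjoining $\{k+1\}$ as a new block to $P$: since $\min(\{k+1\})=k+1$ exceeds all $\min(P_j)$, the ordering convention places it at the end, so $x_P^* x_{k+1}=x_{P\cup\{\{k+1\}\}}^*$. For the second term, I would apply the magmatic Leibniz rule $(fg)*h=\sum(f*h^{(1)})(g*h^{(2)})$ from Proposition \ref{prop2} inductively to $x_{P_1}^*\ldots x_{P_p}^*$ with $h=x_{k+1}$ primitive, obtaining
\[
x_P^**x_{k+1}=\sum_{j=1}^{p} x_{P_1}^*\ldots(x_{P_j}^**x_{k+1})\ldots x_{P_p}^*.
\]
Since $k+1$ is strictly greater than every element of $P_j$, the definition of $x_{P_j}^*$ as a left-combed product gives $x_{P_j}^**x_{k+1}=x_{P_j\cup\{k+1\}}^*$, and augmenting $P_j$ does not change its minimum, preserving the ordering convention.

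Summing over all partitions $P$ of $[k]$, the contributions $x_{P\cup\{\{k+1\}\}}^*$ and $x_{(P\setminus P_j)\cup\{P_j\cup\{k+1\}\}}^*$ precisely enumerate every partition of $[k+1]$ exactly once, according to whether $k+1$ forms its own block or joins an existing block $P_j$. This yields
\[
\phi_*(x_1\ldots x_{k+1})=\sum_{P'\text{ partition of }[k+1]} x_{P'}^*,
\]
completing the induction.

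The only real subtlety is bookkeeping: confirming that the convention $\min(P_1)<\ldots<\min(P_p)$ is preserved under both operations (adding a singleton block, and augmenting an existing block), and that $x_{P_j}^**x_{k+1}=x_{P_j\cup\{k+1\}}^*$ holds because $k+1$ is larger than all indices already appearing. Neither point requires any calculation beyond what is already provided by Proposition \ref{prop2}, so there is no serious obstacle.
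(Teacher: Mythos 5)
Your proof is correct and follows essentially the same route as the paper: induction on $k$ via $\phi_*(x_1\ldots x_{k+1})=\phi_*(x_1\ldots x_k)\circledast x_{k+1}$, expansion of $\circledast$ against the primitive element $x_{k+1}$, the Leibniz rule for $*$ on products, the identification $x_{P_j}^**x_{k+1}=x_{P_j\cup\{k+1\}}^*$, and the bijection between the resulting terms and partitions of $[k+1]$. No issues.
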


\begin{proof} By induction on $k$. As $\phi_*(x)=x$ for all $x\in V$, it is obvious if $k=1$. Let us assume the result at rank $k$.
\begin{align*}
\phi_*(x_1\ldots x_{k+1})&=\phi_*(x_1\ldots x_k) \circledast x_{k+1}\\
&=\phi_*(x_1\ldots x_k)x_{k+1}+\phi_*(x_1\ldots x_k)*x_{k+1}\\
&=\sum_{\mbox{\scriptsize $P$ partition of $[k]$}}x_P^*x_{k+1}+
\sum_{\substack{\mbox{\scriptsize $P=\{P_1,\ldots,P_p\}$}\\ \mbox{\scriptsize partition of $[k]$}}}
\sum_{i=1}^p x_{P_1}^*\ldots (x_{P_i}^**x_{k+1})\ldots x_{p_p}^*\\
&=\sum_{\substack{\mbox{\scriptsize $P=\{P_1,\ldots,P_p\}$}\\ \mbox{\scriptsize partition of $[k]$}}}
x_{\{P_1,\ldots,P_p,\{k+1\}\}}^*+\sum_{\substack{\mbox{\scriptsize $P=\{P_1,\ldots,P_p\}$}\\ \mbox{\scriptsize partition of $[k]$}}}
\sum_{i=1}^p x_{\{P_1,\ldots,P_i\cup\{k+1\},\ldots,P_p\}}^*\\
&=\sum_{\mbox{\scriptsize $P$ partition of $[k+1]$}}x_P^*.
\end{align*}
So the result holds for all $k$. \end{proof}\\

\textbf{Examples.} Let $x_1,x_2,x_3\in V$.
\begin{align*}
\phi_*(x_1)&=x_1,\\
\phi_*(x_1x_2)&=x_1x_2+x_1*x_2,\\
\phi_*(x_1x_2x_3)&=x_1x_2x_3+(x_1*x_2)x_3+(x_1*x_3)x_2+x_1(x_2*x_3)+(x_1*x_2)*x_3.
\end{align*}

\begin{theorem}\label{theo7}
Let $(V,*)$ be a magmatic algebra and let $(L,\{-,-\},\star)$ be a post-Lie algebra. Let $\phi:(V,*)\longrightarrow (L,\star)$ be a morphism
of magmatic algebras. There exists a unique morphism of post-Lie algebras $\overline{\phi} : \Lie(V)\longrightarrow L$ extending $\phi$.
\end{theorem}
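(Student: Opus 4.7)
The universal property of the free Lie algebra on $V$ produces a unique Lie algebra morphism $\overline{\phi}:(\Lie(V),[-,-])\to(L,\{-,-\})$ extending the linear map $\phi:V\to L$. Since any post-Lie morphism is in particular a Lie morphism and $V$ generates $\Lie(V)$ as a Lie algebra, uniqueness of $\overline{\phi}$ as a post-Lie morphism is automatic. The substance of the theorem is therefore to verify
\[
\overline{\phi}(f*g)=\overline{\phi}(f)\star \overline{\phi}(g), \qquad f,g\in \Lie(V),
\]
the left-hand side being well-defined by Proposition \ref{prop5}.

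I plan to prove this identity by induction on $n=\deg(f)+\deg(g)$, where the degree is taken with respect to the length grading $\Lie(V)=\bigoplus_{k\geq 1}\Lie(V)_k$, $\Lie(V)_k=\Lie(V)\cap V^{\otimes k}$. The base case $n=2$, i.e.\ $f,g\in V$, is exactly the hypothesis that $\phi$ is a magmatic morphism, the extended $*$ restricting to the original product on $V\otimes V$.

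For the inductive step, at least one of $f,g$ has degree $>1$. If $\deg(f)>1$, by linearity I may assume $f=[f_1,f_2]$ with $\deg(f_1),\deg(f_2)<\deg(f)$; the post-Lie axiom \eqref{EQ2} gives $[f_1,f_2]*g=[f_1*g,f_2]+[f_1,f_2*g]$, and applying the Lie morphism $\overline{\phi}$, the induction hypothesis for $(f_1,g)$ and $(f_2,g)$, and then \eqref{EQ2} in $L$ yields the identity. If instead $\deg(g)>1$, I may assume $g=[g_1,g_2]$ with $\deg(g_1),\deg(g_2)<\deg(g)$; the post-Lie axiom \eqref{EQ1} expands
\[
f*[g_1,g_2]=(f*g_1)*g_2-f*(g_1*g_2)-(f*g_2)*g_1+f*(g_2*g_1),
\]
in which every recursive call has total degree strictly less than $n$ thanks to $\deg(g_i*g_j)=\deg(g_i)$ (Lemma \ref{lem3}(1)). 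Regrouping via \eqref{EQ1} in $L$ then produces $\overline{\phi}(f)\star\{\overline{\phi}(g_1),\overline{\phi}(g_2)\}=\overline{\phi}(f)\star\overline{\phi}(g)$.

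The main delicate point is the book-keeping of the induction: the right-hand sides of \eqref{EQ1} and \eqref{EQ2} introduce nested products $g_i*g_j$ and $f_i*g$ that are not syntactically simpler than the originals, so well-foundedness of the recursion hinges on Lemma \ref{lem3}(1) — the crucial fact that $*$ preserves the degree of its first argument, so that each intermediate product lands in a strictly lower total degree.
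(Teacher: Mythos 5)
Your proof is correct and follows essentially the same route as the paper: both arguments reduce in the first variable via axiom \eqref{EQ2}, reduce in the second variable via axiom \eqref{EQ1} after writing a higher-degree element as a sum of brackets, and rely on Lemma \ref{lem3}(1) (degree preservation of $*$ in its first slot) to make the recursion well-founded. The only difference is organizational: you run a single induction on the total degree $\deg(f)+\deg(g)$, whereas the paper first shows that, for fixed $h$, the set of $f$ with $\psi(f*h)=\psi(f)\star\psi(h)$ is a Lie subalgebra containing $V$, and then inducts on the degree of the second argument.
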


\begin{proof} Let $\psi:\Lie(V)\longrightarrow L$ be the unique Lie algebra morphism extending $\phi$. 
Let us fix $h\in \Lie(V)$. We consider:
$$A_h=\{h\in \Lie(V)\:\mid\: \forall f \in \Lie(V), \: \psi(f*h)=\psi(f) \star\psi(h)\}.$$
If $f,g \in A_h$, then:
\begin{align*}
\psi([f,g]*h)&=\psi([f*h,b]+[f,g*h])\\
&=\{\psi(f*h),\psi(g)\}+\{\psi(f),\psi(g*h)\}\\
&=\{\psi(f)\star\psi(h),\psi(g)\}+\{\psi(f),\psi(g)\star\psi(h)\}\\
&=\{\psi(f),\psi(g)\}\star\psi(h)\\
&=\psi([f,g])\star\psi(h).
\end{align*}
So $[f,g]\in A_h$: for all $h\in \Lie(V)$, $A_h$ is a Lie subalgebra of $\Lie(V)$.
Moreover, if $h\in V$, as $\psi_{\mid V}=\phi$ is a morphism of magmatic algebras, $V \subseteq A_h$; as a consequence, if $h\in V$, $A_h=\Lie(V)$. \\

Let $A=\{h\in \Lie(V)\:\mid\: A_h=\Lie(V)\}$. We put $\Lie(V)_n=\Lie(V) \cap V^{\otimes n}$;
let us prove  inductively that $\Lie(V)_n \subseteq A$ for all $n$. We already proved that $V \subseteq A$, so this is true for $n=1$.
Let us assume the result at all rank $k<n$. Let $h\in \Lie(V)_n$. We can assume that $h=[h_1,h_2]$, with $h_1 \in \Lie(V)_k$, $h_2\in \Lie(V)_{n-k}$,
$1\leq k\leq n-1$. From Lemma \ref{lem3} and Proposition \ref{prop5}, $_1f*h_2 \in \Lie(V)_k$ and $h_2*h_1\in \Lie(V)_{n-k}$, 
so the induction hypothesis holds for $h_1$, $h_2$, $h_1*h_2$ and $h_2*h_1$. Hence, for all $f\in T(V)$:
\begin{align*}
\psi(f*h)&=\psi(f*[h_1,h_2])\\
&=\psi((f*h_1)*h_2-f*(h_1*h_2)-(f*h_2)*h_1+f*(h_2*h_1))\\
&=(\psi(f)\star\psi(h_1))\star\psi(h_2)-\psi(f)\star(\psi(h_1)\star\psi(h_2))\\
&-(\psi(f)\star\psi(h_2))\star\psi(h_1)+\psi(f)\star(\psi(h_2)\star\psi(h_1))\\
&=\psi(f) \star\{\psi(h_1),\psi(h_2)\}\\
&=\psi(f) \star\psi(h).
\end{align*}
As a consequence, $\Lie(V)_n \subseteq A$. Finally, $A=\Lie(V)$, so for all $f,g\in \Lie(V)$, $\psi(f*g)=\psi(f)*\psi(g)$. \end{proof}
 
\begin{corollary}\label{cor8}
Let $V$ be a vector space. The free magmatic algebra generated by $V$ is denoted by $\Mag(V)$.
Then $\Lie(\Mag(V))$ is the free post-Lie algebra generated by $V$.
\end{corollary}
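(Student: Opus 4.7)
The plan is to chain together two universal properties: the universal property defining the free magmatic algebra $\Mag(V)$ on $V$, together with the universal property of $\Lie(\Mag(V))$ as a post-Lie algebra over the magmatic algebra $\Mag(V)$, just established in Theorem~\ref{theo7}. Concretely, given a post-Lie algebra $(L,\{-,-\},\star)$ and a linear map $\phi:V\longrightarrow L$, the aim is to produce a unique morphism of post-Lie algebras $\Phi:\Lie(\Mag(V))\longrightarrow L$ extending $\phi$.

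For existence, I would first observe that $(L,\star)$ is, in particular, a magmatic algebra; by the universal property of $\Mag(V)$, the linear map $\phi$ then extends uniquely to a magmatic morphism $\widetilde{\phi}:(\Mag(V),*)\longrightarrow (L,\star)$. I would then apply Theorem~\ref{theo7}, with $(\Mag(V),*)$ playing the role of the generic magmatic algebra $(V,*)$ there, to extend $\widetilde{\phi}$ to a post-Lie morphism $\overline{\widetilde{\phi}}:\Lie(\Mag(V))\longrightarrow L$. Since $\overline{\widetilde{\phi}}$ restricts to $\widetilde{\phi}$ on $\Mag(V)$ and $\widetilde{\phi}$ restricts to $\phi$ on $V$, this produces the desired extension.

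For uniqueness, suppose $\Phi:\Lie(\Mag(V))\longrightarrow L$ is any post-Lie morphism with $\Phi_{|V}=\phi$. The key observation is that $\Mag(V)$, viewed inside $\Lie(\Mag(V))$, is a magmatic subalgebra whose magmatic product is the original one on $\Mag(V)$. This is built directly into Proposition~\ref{prop2}: the construction there takes as input the magmatic product on the generating space and extends it, so its restriction to $\Mag(V)\otimes \Mag(V)$ reproduces the original $*$. Consequently $\Phi_{|\Mag(V)}$ is a magmatic morphism extending $\phi$, hence equals $\widetilde{\phi}$ by the universal property of $\Mag(V)$; the uniqueness clause of Theorem~\ref{theo7} then forces $\Phi=\overline{\widetilde{\phi}}$.

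No serious obstacle is expected: the corollary is a formal consequence of Theorem~\ref{theo7} once one correctly identifies $\Mag(V)$ as a magmatic subalgebra of its enveloping post-Lie algebra, and the only technical step really deserving attention is this identification, which is immediate from the inductive definition in Proposition~\ref{prop2}.
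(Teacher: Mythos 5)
Your proof is correct and follows the same route as the paper: extend $\phi$ to a magmatic morphism $\widetilde{\phi}:\Mag(V)\to(L,\star)$ by the universal property of $\Mag(V)$, then apply Theorem~\ref{theo7} to get the post-Lie extension, with uniqueness obtained by chaining the two uniqueness clauses. The extra care you take in checking that $\Mag(V)$ sits inside $\Lie(\Mag(V))$ as a magmatic subalgebra with its original product (via Proposition~\ref{prop2} and Lemma~\ref{lem3}) is the right justification for the uniqueness step, which the paper leaves implicit.
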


\begin{proof} Let $L$ be a post-Lie algebra and let $\phi$ be a linear map from $V$ to $L$. From the universal property of $\Mag(V)$, there exists a unique
morphism of magmatic algebras from $\Mag(V)$ to $L$ extending $\phi$; from the universal property of $\Lie(\Mag(V))$, this morphism can be uniquely
extended as a morphism of post-Lie algebras from $\Lie(\Mag(V))$ to $V$. So $\Lie(\Mag(V))$ satisfies the required universal property to 
be a  post-Lie algebra generated by $V$. \end{proof} \\

\textbf{Remark.} Describing the free magmatic algebra generated by $V$ is terms of planar rooted trees with a grafting operation,
we get back the construction of free post-Lie algebras of \cite{MK}.

\subsection{Enveloping algebra of a post-Lie algebra}

Let $(V,\{-,-\},*)$ be a post-Lie algebra.  We extend $*$ onto $T(V)$ as previously in Proposition \ref{prop2}. 
The usual bracket of $\Lie(V) \subseteq T(V)$ is denoted by $[f,g]=fg-gf$, and should not be confused with the bracket $\{-,-\}$ of the post-Lie algebra $V$.

\begin{lemma}
Let $I$ be the two-sided ideal of $T(V)$ generated by the elements $xy-yx-\{x,y\}$, $x,y \in V$.
Then $I*T(V)\subseteq I$ and $T(V)*I=(0)$.
\end{lemma}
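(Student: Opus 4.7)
My plan is to prove the two inclusions in the order $I*T(V)\subseteq I$ then $T(V)*I = (0)$, because the first half yields a sharp identity for $u*w$ that is essential input for the second.

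Two preliminary facts will drive everything. First, every generator $u = u_{x,y} := xy - yx - \{x,y\}$ is primitive for $\Delta_{\shuffle}$: the crossed terms $x\otimes y + y\otimes x$ in $\Delta_{\shuffle}(xy)$ and $\Delta_{\shuffle}(yx)$ cancel, and $\{x,y\}\in V$ is automatically primitive. Second, $z*u = 0$ for every $z \in V$: expanding via Proposition~\ref{prop2}(3) rewrites $z*u_{x,y}$ as $(z*x)*y - z*(x*y) - (z*y)*x + z*(y*x) - z*\{x,y\}$, which vanishes by axiom~\eqref{EQ1}. Combined with Proposition~\ref{prop2}(4) and the primitivity of $u$, this extends at once to $a*u = 0$ for every $a \in T(V)$, since $(x_1\cdots x_n)*u = \sum_i x_1 \cdots (x_i*u) \cdots x_n = 0$.

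\textit{The inclusion $I*T(V)\subseteq I$.} Iterating Proposition~\ref{prop2}(4) gives
\[
(aub)*f = \sum (a*f^{(1)})(u*f^{(2)})(b*f^{(3)}),
\]
so the claim reduces to showing $u*h \in I$ for every generator $u$ and every $h \in T(V)$. I induct on $|h|$. The key base case $h = w \in V$ is a direct expansion: from $(xy)*w = (x*w)y + x(y*w)$ and its companion for $yx$, one gets
\[
u_{x,y}*w = [x*w,\,y] + [x,\,y*w] - \{x,y\}*w,
\]
where $[a,b] = ab - ba$. Writing $[a,b] = u_{a,b} + \{a,b\}$ and applying axiom~\eqref{EQ2}, the post-Lie bracket terms cancel, leaving the sharp identity
\[
u_{x,y}*w = u_{x*w,\,y} + u_{x,\,y*w},
\]
itself a sum of two generators. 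For $|h| \geq 1$, write $h = h'w$ and use Lemma~\ref{lem3}(4): the terms $u*h'$ and $u*(h'*w)$ lie in $I$ by induction (using $|h'*w| = |h'|$ by Lemma~\ref{lem3}(1)), while $(u*h')*w$ needs the sublemma $I*V \subseteq I$, which follows from the expansion $(aub)*w = (a*w)ub + a(u*w)b + au(b*w)$: the outer terms are in $I$ by the ideal property, and the middle one is in $I$ because of the sharp identity.

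\textit{The inclusion $T(V)*I = (0)$.} Primitivity of $u$ forces $\Delta^{(n-1)}_{\shuffle}(aub)$ to place the generator in exactly one of the $n$ tensor factors, so by Proposition~\ref{prop2}(4) every summand of $(x_1\cdots x_n)*(aub)$ carries a single factor $x_i * (\widetilde a u \widetilde b)$ with $x_i \in V$. It suffices therefore to prove $x * (aub) = 0$ for $x \in V$ and $a,b \in T(V)$. I induct on $|b|$. For the base $|b| = 0$, Lemma~\ref{lem3}(5) with $h=u$ primitive yields $x*(au) = (x*a)*u - x*(a*u)$; since $x*a \in V$ by Lemma~\ref{lem3}(1), the first term vanishes by $V*u = 0$, and the second vanishes because $a*u = 0$. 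For $|b| \geq 1$, write $b = cw$ with $w \in V$, apply Lemma~\ref{lem3}(4), and expand $(auc)*w = (a*w)uc + a(u*w)c + au(c*w)$; each summand is $x$ acting on a word of the form $\widetilde a u' \widetilde b$ with $|\widetilde b| < |b|$, the middle one precisely because $u*w$ is a sum of generators, so the induction closes.

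The main obstacle is this last step: the middle summand $a(u*w)c$ can be fed back into the induction on $|b|$ only because the first half delivered $u*w$ as a \emph{sum of generators} of the same shape, not merely as an abstract element of $I$. It is exactly this sharpening, resting on the second post-Lie axiom~\eqref{EQ2}, that makes the two inductions compatible and closes the argument.
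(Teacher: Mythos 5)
Your proof is correct and follows essentially the same strategy as the paper's: both arguments rest on the primitivity of the generators $u_{x,y}=xy-yx-\{x,y\}$, on axiom (\ref{EQ2}) to show that $u_{x,y}*w$ remains a combination of generators, and on axiom (\ref{EQ1}) to obtain $z*u_{x,y}=0$ for $z\in V$, with the inductions on word length driven by Lemma \ref{lem3}. The only organizational differences are that the paper proves the stronger closure $J*T(V)\subseteq J$ for the span $J$ of the generators (you prove $u*w\in J$ only for $w\in V$ and settle for $u*h\in I$ in general, compensating with the sublemma $I*V\subseteq I$), and that the paper establishes $x*(gu_{y,z})=0$ by a direct expansion using (\ref{EQ1}) where you derive it more economically from $a*u=0$ via Lemma \ref{lem3}-5.
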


\begin{proof} \textit{First step.} Let us prove that for all $x,y\in V$, for all $h \in T(V)$:
$$\{x,y\}*h=\sum \left\{x*h^{(1)}, y*h^{(2)}\right\}.$$
Note that the second member of this formula makes sense, as $V*T(V) \subseteq V$ by Lemma \ref{lem3}.\\
We assume that $h=z_1\ldots z_n$ and we work by induction on $n$. If $n=0$, then $h=1$ and $\{x,y\}*1=\{x,y\}=\{x*1,y*1\}$.
If $n=1$, then $h\in V$, so $\Delta_{\shuffle}(h)=h\otimes 1+1\otimes h$.
$$\{x,y\}*h=\{x*h,y\}+\{x,y*h\}=\{x*h,y*1\}+\{x*1,y*h\}=\sum \{x*h^{(1)},y*h^{(2)}\}.$$
If $n \geq 2$, we put $h_1=z_1\ldots z_{n-1}$ and $h_2=z_n$. The induction hypothesis holds for $h_1$, $h_2$ and $h_1*h_2$:
\begin{align*}
\{x,y\}*h&=(\{x,y\}*h_1)*h_2-\{x,y\}*(h_1*h_2)\\
&=\sum \left\{x*h_1^{(1)}, y*h_1^{(2)}\right\}*h_2-\sum \left\{x*\left(h_1*h_2\right)^{(1)} ,y*\left(h_1*h_2\right)^{(2)}\right\}\\
&=\sum \left\{\left(x*h_1^{(1)}\right)*h_2^{(1)}, \left(y*h_1^{(2)}\right)*h_2^{(2)}\right\}
-\sum \left\{x*\left(h_1^{(1)}*h_2^{(1)}\right) ,y*\left(h_1 ^{(2)}*h_2^{(2)}\right)\right\}\\
&=\sum \left\{\left(x*h_1^{(1)}\right)*h_2, y*h_1^{(2)}\right\}+\sum \left\{x*h_1^{(1)}, \left(y*h_1^{(2)}\right)*h_2\right\}\\
&-\sum \left\{x*\left(h_1^{(1)}*h_2\right) ,y*h_1 ^{(2)}\right\}-\sum \left\{x*h_1^{(1)},y*\left(h_1 ^{(2)}*h_2\right)\right\}\\
&=\sum \left\{\left(x*h_1^{(1)}\right)*h_2-x*\left(h_1^{(1)}*h_2\right),y*h_1^{(2)}\right\}\\
&+\sum \left\{x*h_1^{(1)}, \left(y*h_1^{(2)}\right)*h_2-y*\left(h_1^{(2)}*h_2\right)\right\}\\
&=\sum \left\{x*\left(h_1^{(1)}h_2\right),y*h_1^{(2)}\right\}+\sum \left\{x*h_1^{(1)}, y*\left(h_1^{(2)}h_2\right)\right\}\\
&=\sum \left\{x*h^{(1)},y*h^{(2)}\right\}.
\end{align*}
Consequently, the result holds for all $h\in T(V)$.\\

\textit{Second step.} Let $J=Vect(xy-yx-\{x,y\}\:\mid \:x,y\in V)$. For all $x,y \in V$, for all $h\in T(V)$, by the first step:
$$(xy-yx-\{x,y\})*h=\sum\left(x*h^{(1)}\right)\left(y*h^{(2)}\right)-\left(y*h^{(1)}\right)\left(y*h^{(2)}\right)-\left\{x*h^{(1)},y*h^{(2)}\right\}\in J.$$
So $J*T(V) \subseteq J$. If $g\in J$, $f_1,f_2,h \in T(V)$:
$$(f_1gf_2)*h=\sum \left(f_1*d^{(1)}\right)\underbrace{\left(g*h^{(2)}\right)}_{\in J} \left(f_2*h^{(3)}\right) \in I.$$
So $I*T(V) \subseteq I$. \\

Let us prove that $T(V)*(T(V)JV^{\otimes n})=(0)$ for all $n \geq 0$. We start with $n=0$. 
First, $1*(T(V)J)=\varepsilon(T(V)J)=(0)$. Let $x,y,z \in V$, $g\in T(V)$. Then:
\begin{align*}
&x*(gyz-gzy-g\{y,z\})\\
&=(x*(gy))*z-x*((gy)*z)-(x*(gz))*y+x*((gz)*y)\\
&-(x*g)*\{y,z\}+x*(g*\{y,z\})\\
&=((x*g)*y)*z-(x*(g*y)*z-x*((g*z)y)\\
&-x*(g(y*z))-((x*g)*z)*y-(x*(g*z))*y\\
&+x*((g*y)z)+x*(g(z*y))-(x*g)*\{y,z\}+x*(g*\{y,z\})\\
&=((x*g)*y)*z-(x*(g*y))*z-(x*(g*z))*y+x*((g*z)*y)\\
&-(x*g)*(y*z)+x*(g*(y*z))-((x*g)*z)*y+(x*(g*z))*y\\
&(x*(g*y))*z-x*((g*y)*z)+(x*g)*(z*y)-x*(g*(z*y))\\
&-(x*g)*\{y,z\}+x*(g*\{y,z\})\\
&=x*((g*z)*y)+x*(g*(y*z))-x*((g*y)*z)-x*(g*(z*y))+x*(g*\{y,z\})\\
&+((x*g)*y)*z-(x*g)*(y*z)-((x*g)*z)*y+(x*g)*(z*y)-(x*g)*\{y,z\}\\
&=0+0.
\end{align*}
So $V*(T(V)J)=(0)$. As the elements of $J$ are primitive, $T(V)J$ is a coideal. If $n \geq 1$,
 $x_1,\ldots,x_n\in V$ and $g\in T(V)J$, we put $\Delta_{\shuffle}^{(n-1)}(g)=\sum g^{(1)}\otimes\ldots \otimes g^{(n)}$,
with at least one $g_i \in T(V)J$. Then $(x_1\ldots x_n)*g=\sum (x_1*g^{(1)})\ldots (x_n*g^{(n)})=0$,
so $T(V)*(T(V)J)=(0)$.

If $n\geq 1$, we take $f \in T(V)$, $g\in T(V)JV^{\otimes (n-1)}$ and $y\in V$.  
We put $g=g_1g_2g_3$, with $g_1\in T(V)$, $g_2 \in J$, $g_3\in V^{\otimes (n-1)}$. Then:
$$g*y=(g_1*y) g_2 g_3+g_1 \underbrace{(g_2*y)}_{\in J*T(V) \subseteq J} g_3
+g_1 g_2 \underbrace{(g_3*y)}_{\in V^{\otimes n}} \in T(V)JV^{\otimes n}.$$
So the induction hypothesis holds for $g$ and for $g*y$. Then $f*(gy)=(f*g)*y-f*(g*y)=0$.
So $T(V)*I=(0)$. \end{proof}\\

As a consequence, the quotient $T(V)/I$ inherits  a magmatic product $*$. Moreover, $I$ is a Hopf ideal, and this implies that
 it is also a two-sided ideal for $\circledast$. As $T(V)/I$  is the enveloping algebra $\U(V,\{-,-\})$, we obtain Proposition 3.1 of \cite{Ebrahimi}:

\begin{proposition}
Let $(\g,\{-,-\},*)$ be a post-Lie algebra. Its magmatic product can be uniquely extended to $\U(\g)$ such that for all $f,g,h\in \U(\g)$, $x,y\in \g$:
\begin{itemize}
\item $f*1=f$.
\item $1*f=\varepsilon(f)1$.
\item $f*(gy)=(f*g)*y-f*(g*y)$.
\item $(fg)*h=\sum \left(f*h^{(1)}\right)\left(g*h^{(2)}\right)$, where $\Delta(h)=\sum h^{(1)}\otimes h^{(2)}$ is the usual coproduct of $\U(\g)$.
\end{itemize}
We define a product $\circledast$ on $\U(\g)$ by $f*g=\sum \left(f*g^{(1)}\right) g^{(2)}$. Then $(\U(\g),\circledast,\Delta)$ is a Hopf algebra,
isomorphic to $\U(\g,\{-,-\}_*)$.
\end{proposition}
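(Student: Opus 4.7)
The plan is to deduce everything from the preceding lemma together with Proposition \ref{prop2}, Theorem \ref{theo4} and Proposition \ref{prop5} applied to the underlying vector space $\g$. Writing $\U(\g)=T(\g)/I$, where $I$ is the two-sided ideal generated by $xy-yx-\{x,y\}$, $x,y\in \g$, the lemma says $I*T(\g)\subseteq I$ and $T(\g)*I=(0)$. Since $*$ on $T(\g)$ is bilinear and $f*g-f'*g'=(f-f')*g+f'*(g-g')$, this implies $*$ passes to the quotient to a well-defined bilinear map $*:\U(\g)\otimes \U(\g)\longrightarrow \U(\g)$. The four bulleted properties then hold on $\U(\g)$ because they hold on $T(\g)$ by Proposition \ref{prop2} and the quotient map is compatible with $\Delta_\shuffle$ (which descends to the usual coproduct $\Delta$ of $\U(\g)$, $I$ being a Hopf ideal). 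Uniqueness is clear on $\g\cdot \U(\g)$ from the first and third properties by induction on the length of the second argument in any PBW basis; the second and fourth properties then extend this uniquely to all of $\U(\g)$.

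Next, I would show that $I$ is also a two-sided ideal for $\circledast$ on $T(\g)$, which will allow $\circledast$ to descend. For $f\in I$ and $g\in T(\g)$, each term $(f*g^{(1)})g^{(2)}$ of $f\circledast g$ lies in $I$ since $I*T(\g)\subseteq I$ and $I$ is a two-sided ideal for the concatenation product. For $f\in T(\g)$ and $g\in I$, the fact that $I$ is a coideal of $(T(\g),\Delta_\shuffle)$ gives $\Delta_\shuffle(g)\in I\otimes T(\g)+T(\g)\otimes I$; in each term either $f*g^{(1)}\in T(\g)*I=(0)$ or $g^{(2)}\in I$, so the contribution lies in $I$. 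Thus $\circledast$ descends to $\U(\g)$, and by Theorem \ref{theo4} passed to the quotient, $(\U(\g),\circledast,\Delta)$ is a Hopf algebra.

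Finally, to identify the isomorphism type, I would compute the commutator bracket of $\circledast$ on primitive elements. For $x,y\in \g\subseteq \U(\g)$, one has $\Delta(y)=y\otimes 1+1\otimes y$, hence
\[
x\circledast y=(x*1)y+(x*y)\cdot 1=xy+x*y,
\]
so $x\circledast y-y\circledast x=(xy-yx)+(x*y-y*x)=\{x,y\}+x*y-y*x=\{x,y\}_*$, where $xy-yx=\{x,y\}$ in $\U(\g)$. Thus the Lie algebra of primitives of $(\U(\g),\circledast,\Delta)$ contains $\g$ equipped with the bracket $\{-,-\}_*$. Since $\U(\g)$ is generated as an associative algebra by $\g$ and is graded connected (through the PBW filtration from $T(\g)$), Cartier--Quillen--Milnor--Moore identifies $(\U(\g),\circledast,\Delta)$ with the enveloping algebra $\U(\g,\{-,-\}_*)$.

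The main obstacle is the first paragraph: proving that the four properties do not conflict in the quotient and that they still characterize $*$ uniquely on $\U(\g)$. Once this descent is cleanly established using both inclusions $I*T(\g)\subseteq I$ and $T(\g)*I=(0)$ of the preceding lemma, the rest is a routine transfer through the quotient of what was already proved in Section~1.
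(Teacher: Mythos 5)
Your proposal is correct and follows essentially the same route as the paper: descend $*$ and then $\circledast$ through the ideal $I$ using the two inclusions $I*T(\g)\subseteq I$ and $T(\g)*I=(0)$ from the preceding lemma together with the fact that $I$ is a Hopf ideal, then apply Cartier--Quillen--Milnor--Moore and compute the commutator bracket $x\circledast y-y\circledast x=\{x,y\}+x*y-y*x=\{x,y\}_*$ on primitives. The only difference is that you spell out the verification that $I$ is a two-sided ideal for $\circledast$, which the paper leaves as an immediate consequence of $I$ being a Hopf ideal.
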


\begin{proof} By Cartier-Quillen-Milnor-Moore's theorem, $(\U(\g),\circledast,\Delta)$ is an enveloping algebra; the underlying Lie algebra
is $Prim(\U(\g))=\g$, with the Lie bracket defined by:
\begin{align*}
\{x,y\}_\circledast&=x\circledast y-y\circledast x=xy+x*y-yx-y*x.
\end{align*}
This is the bracket $\{-,-\}_*$. \end{proof}\\

\textbf{Remarks.} \begin{enumerate}
\item If $\g$ is a post-Lie algebra with $\{-,-\}=0$, it is a pre-Lie algebra, and $\U(\g)=S(\g)$. We obtain again the Oudom-Guin construction
\cite{Oudom2,Oudom1}.
\item By Lemma \ref{lem3}, $(\U(\g),*)$ is a right $(\U(\g),\circledast)$-module. By restriction, $(\g,*)$ is also a right $(\U(\g),\circledast)$-module.
\end{enumerate}

\subsection{The particular case of associative algebras}

Let $(V,\triangleleft)$ be an associative algebra. The associated Lie bracket is denoted by $[-,-]_\triangleleft$. As $(V,0,\triangleleft)$
is post-Lie, the construction of the enveloping algebra of $(V,[-,-]_\triangleleft)$ can be done: we obtain a product $\triangleleft$
defined on $S(V)$ and an associative product $\blacktriangleleft$ making $(S(V),\blacktriangleleft,\Delta)$ a Hopf algebra,
isomorphic to the enveloping algebra of $(V,[-,-]_\triangleleft)$. 

\begin{lemma}\label{lem12}
If $x_1,\ldots,x_k,y_1,\ldots,y_l \in V$:
\begin{align*}
x_1\ldots x_k \triangleleft y_1\ldots y_l&=\sum_{\theta:[l]\hookrightarrow [k]} \left(\prod_{i\notin Im(\theta)} x_i\right)
\left(\prod_{i=1}^k x_{\theta(i)} \triangleleft y_i\right),\\
x_1\ldots x_k \blacktriangleleft y_1\ldots y_l&=\sum_{I\subseteq [l]}\sum_{\theta:I\hookrightarrow [k]} 
\left(\prod_{i\notin Im(\theta)} x_i\right)\left(\prod_{j\notin I} y_j\right)\left(\prod_{i\in I}x_{\theta(i)} \triangleleft y_i\right).
\end{align*}\end{lemma}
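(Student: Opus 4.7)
The approach is a direct computation that exploits how associativity of $\triangleleft$ on $V$ drastically prunes the summands appearing in the extension constructed in Proposition \ref{prop2}.

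The crucial preliminary fact to establish is: for all $x,y_1,\ldots,y_l \in V$ with $l \geq 2$, one has $x \triangleleft (y_1 \cdots y_l) = 0$. For $l = 2$ the inductive definition from the proof of Proposition \ref{prop2} yields $x \triangleleft (y_1 y_2) = (x \triangleleft y_1) \triangleleft y_2 - x \triangleleft (y_1 \triangleleft y_2)$, which vanishes by associativity of $\triangleleft$ on $V$. For $l \geq 3$ the same definition expresses $x \triangleleft (y_1 \cdots y_l)$ as a sum of terms of the form $x \triangleleft w$ with $w$ a word of length $l-1 \geq 2$, so an induction on $l$ closes the case. Consequently $x \triangleleft w$ can be nonzero only when $w$ has length at most one.

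For the first formula, I iterate the last bullet of Proposition \ref{prop2} (which, using coassociativity of $\Delta_{\shuffle}$, gives $(x_1 \cdots x_k) \triangleleft h = \sum (x_1 \triangleleft h^{(1)}) \cdots (x_k \triangleleft h^{(k)})$, where the $h^{(i)}$ range over the $k$-fold deshuffle components of $h$) and specialize to $h = y_1 \cdots y_l$. These deshuffle components are naturally parametrized by maps $\phi : [l] \to [k]$, slot $i$ receiving the subword of $y_1 \cdots y_l$ indexed by $\phi^{-1}(i)$ in the original order. By the preliminary fact, only those $\phi$ for which every fiber $\phi^{-1}(i)$ has at most one element contribute, namely the injections $\theta : [l] \hookrightarrow [k]$. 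For such a $\theta$, slot $\theta(j)$ contributes $x_{\theta(j)} \triangleleft y_j$ while each slot $i \notin Im(\theta)$ contributes $x_i \triangleleft 1 = x_i$; commutativity of $S(V)$ then allows grouping the factors in the stated form.

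For the second formula, I apply the definition $f \blacktriangleleft g = \sum (f \triangleleft g^{(1)}) g^{(2)}$. The coproduct on $S(V)$ acts on the monomial $y_1 \cdots y_l$ by $\Delta(y_1 \cdots y_l) = \sum_{I \subseteq [l]} \bigl(\prod_{i \in I} y_i\bigr) \otimes \bigl(\prod_{j \notin I} y_j\bigr)$, reducing the left-hand side to $\sum_{I \subseteq [l]} \bigl((x_1 \cdots x_k) \triangleleft \prod_{i \in I} y_i\bigr) \prod_{j \notin I} y_j$. Expanding each inner $\triangleleft$-product by the first formula yields exactly the double sum in the statement. There is no serious obstacle beyond the indexing bookkeeping, which amounts to the routine identification of the three kinds of factors: untouched $x_i$'s for $i \notin Im(\theta)$, untouched $y_j$'s for $j \notin I$, and paired factors $x_{\theta(i)} \triangleleft y_i$ for $i \in I$.
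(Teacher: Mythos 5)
Your proof is correct and follows essentially the same route as the paper: the same preliminary vanishing fact $x\triangleleft y_1\cdots y_l=0$ for $l\geq 2$ (proved by the same induction using associativity), the same pruning of the iterated deshuffle expansion down to injections $\theta:[l]\hookrightarrow[k]$, and the same coproduct expansion of $\blacktriangleleft$ over subsets $I\subseteq[l]$. No gaps.
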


\begin{proof}  We first prove that for all $k\geq 2$, $x,y_1,\ldots,y_k \in V$, $x\triangleleft y_1\ldots y_k=0$.
We proceed by induction on $k$. For $k=2$, $x\triangleleft y_1y_2=(x\triangleleft y_1)\triangleleft y_2-x\triangleleft (y_1\triangleleft y_2)=0$,
as $\triangleleft$ is associative. Let us assume the result at rank $k$. Then:
$$x\triangleleft y_1\ldots y_{k+1}=(x\triangleleft y_1\ldots y_k)\triangleleft y_{k+1}-\sum_{i=1}^k 
x\triangleleft (y_1\ldots (y_i\triangleleft y_{k+1})\ldots y_k)=0.$$

Let us now prove the formula for $\triangleleft$. 
\begin{align*}
x_1\ldots x_k \triangleleft y_1\ldots y_l&=\sum_{[l]=I_1\sqcup \ldots \sqcup I_k}
\left(x_1\triangleleft \prod_{i\in I_1} y_i\right)\ldots \left(x_k\triangleleft \prod_{i\in I_k} y_i\right).
\end{align*}
Moreover, for all $j$:
$$x_j\triangleleft \prod_{i\in I_j} y_i=\begin{cases}
x_j\mbox{ if } I_j=\emptyset,\\
x_j\triangleleft y_p\mbox{ if }I_j=\{p\},\\
0\mbox{ otherwise}.
\end{cases}$$
Hence:
\begin{align*}
x_1\ldots x_k \triangleleft y_1\ldots y_l&=\sum_{\substack{[l]=I_1\sqcup \ldots \sqcup I_k\\ \forall p, \: |I_p|\leq 1}}
\left(x_1\triangleleft \prod_{i\in I_1} y_i\right)\ldots \left(x_k\triangleleft \prod_{i\in I_k} y_i\right)\\
&=\sum_{\theta:[l]\hookrightarrow [k]} \left(\prod_{i\notin Im(\theta)} x_i\right)
\left(\prod_{i=1}^k x_{\theta(i)} \triangleleft y_i\right).
\end{align*}
Finally:
$$x_1\ldots x_k\blacktriangleleft y_1\ldots y_l=\sum_{I\subseteq [l]} 
\left(\prod_{i\notin I} y_i\right) x_1\ldots x_k \triangleleft \left(\prod_{i\in I} y_i\right),$$
as announced. \end{proof}\\

\textbf{Examples.} Let $x_1,x_2,y_2,y_2 \in V$.
\begin{align*}
x_1\blacktriangleleft y_1&=x_1y_1+x_1\triangleleft y_1,\\
x_1x_2\blacktriangleleft y_1&=x_1x_2y_1+(x_1\triangleleft y_1)x_2+x_1(x_2\triangleleft y_1),\\
x_1 \blacktriangleleft y_1y_2&=x_1y_1y_2+(x_1\triangleleft y_1)y_2+(x_1\triangleleft y_2)y_1,\\
x_1x_2\blacktriangleleft y_1y_2&=x_1x_2y_1y_2+(x_1\triangleleft y_1)x_2y_2+(x_1\triangleleft y_2)x_2y_1+x_1(x_2\triangleleft y_1)y_2\\
&+x_1(x_2\triangleleft y_2)y_1+(x_1\triangleleft y_1)(x_2\triangleleft y_2)+(x_1\triangleleft y_2)(x_2\triangleleft y_1).
\end{align*}

\textbf{Remark.} The number of terms in $x_1\ldots x_k \triangleleft y_1\ldots y_l$ is:
$$\sum_{i=0}^{\min(k,l)} \binom{l}{i}\binom{k}{i}i!,$$
see sequences A086885 and A176120 of \cite{Sloane}.

\section{A family of solvable Lie algebras}

\subsection{Definition}

\begin{definition} 
 Let us fix $a=(a_1,\ldots,a_N)\in \K^N$. We define an associative product $\triangleleft$ on $\K^N$:
 $$\forall i,j\in [N],\: \epsilon_i\triangleleft \epsilon_j=a_j\epsilon_i.$$
The associated Lie bracket is denoted by $[-,-]_a$:
$$\forall i,j\in[N],\: [\epsilon_i,\epsilon_j]_a=a_j\epsilon_i-a_i\epsilon_j.$$
This Lie algebra is denoted by $\g_a$.
\end{definition}

%

\textbf{Remarks.} \begin{enumerate}
\item Let $A\in M_{N,M}(\K)$, and $a\in \K^N$. The following map is a Lie algebra morphism:
$$\left\{\begin{array}{rcl}
\g_{a. \:^tA}&\longrightarrow&\g_a\\
x&\longrightarrow&Ax.
\end{array}\right.$$
Consequently, if $a\neq(0,\ldots,0)$, $g_a$ is isomorphic to $\g_{(1,0,\ldots,0)}$.
\item These Lie algebras $\g_a$ are characterized by the following property: if $\g$
 is a $n$-dimensional Lie algebra such that any $2$-dimensional subspace of $\g$ is a Lie subalgebra, there exists $a\in \K^n$
 such that $\g$ and $\g_a$ are isomorphic.
\end{enumerate}

\begin{definition}
Let $A=T(V)^N$. The elements of $A$ will be denoted by:
$$f=\left(\begin{array}{c} f_1\\ \vdots\\ f_N\end{array}\right)=f_1\epsilon_1+\ldots+f_N\epsilon_N.$$
For all $i,j \in [N]$, we define bilinear products $\shufg{i}$ and $\shufd{j}$:
\begin{align*}
&\forall  f,g \in T(V)^N,&f \shufg{i} g&=\left(\begin{array}{c} f_i \shuffle g_1\\ \vdots\\ f_i \shuffle g_N\end{array}\right),&
f \shufd{j} g&=\left(\begin{array}{c} f_1 \shuffle g_j\\ \vdots\\ f_N \shuffle g_j\end{array}\right).
\end{align*}
In other words, if $f,g \in T(V)$, for all $k,l\in [N]$:
\begin{align*}
f\epsilon_k \shufg{i} g\epsilon_l&=\delta_{i,k} (f\shuffle g) \epsilon_l,&
f\epsilon_k \shufd{j} g\epsilon_l&=\delta_{j,l} (f\shuffle g) \epsilon_k.
\end{align*}
If $a=(a_1,\ldots,a_N)\in \K^N$, we put $_a\shuffle=a_1\shufg{1}+\ldots+a_N\shufg{N}$ and $\shufd{a}=a_1\shufd{1}+\ldots+a_N \shufd{N}$.
\end{definition}

\begin{proposition}
Let $f,g \in \K^N$. For all $f,g,h\in A$:
\begin{align*}
(f\shufd{a} g)\shufd{b} h&=f \shufd{a} (g\shufd{b} h),&(f\shufd{a} g)\shufg{b} h&=f \shufd{a} (g\shufg{b} h),\\
(f\shufg{a} g)\shufd{b} h&=f \shufg{a} (g\shufd{b} h),&(f\shufg{a} g)\shufg{b} h&=f \shufg{a} (g\shufg{b} h),\\
f \shufd{a} g&=g\shufg{a} f.
\end{align*}\end{proposition}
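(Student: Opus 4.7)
My plan is to reduce each identity to the associativity and commutativity of the shuffle product $\shuffle$ on $T(V)$. First I would correct the (apparent) typo and interpret the statement as ``let $a,b\in \K^N$'', since the elements $f,g,h$ are taken in $A=T(V)^N$. Every product appearing in the five identities is linear in the parameter $a$ or $b$: by definition $_a\shuffle=\sum_i a_i\shufg{i}$ and $\shufd{a}=\sum_j a_j\shufd{j}$. Hence it suffices, by bilinearity in $(a,b)$, to check the five equalities when $a=\epsilon_i$ and $b=\epsilon_j$ for arbitrary $i,j\in [N]$, i.e.\ for the basic products $\shufg{i}$ and $\shufd{j}$.

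Next I would use linearity in $f,g,h$ to reduce to the case of pure ``monomials'' $f=f_1\epsilon_k$, $g=g_1\epsilon_l$, $h=h_1\epsilon_m$ with $f_1,g_1,h_1\in T(V)$ and $k,l,m\in [N]$. Applying the explicit formulas
\[
f_1\epsilon_k \shufg{i} g_1\epsilon_l = \delta_{i,k}(f_1\shuffle g_1)\epsilon_l,\qquad f_1\epsilon_k \shufd{j} g_1\epsilon_l=\delta_{j,l}(f_1\shuffle g_1)\epsilon_k,
\]
each of the two sides of any of the four associativity-type identities becomes a Kronecker product of two deltas (picking out the appropriate components of $g$ and $h$) times an iterated shuffle of $f_1,g_1,h_1$, carried in a prescribed copy of $\K\epsilon_{?}$. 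A direct inspection shows that the two sides carry the same $\delta$-factors and the same destination component; associativity of $\shuffle$ then gives the equality. For the fifth identity, the computation
\[
f_1\epsilon_k \shufd{i} g_1\epsilon_l = \delta_{i,l}(f_1\shuffle g_1)\epsilon_k, \qquad g_1\epsilon_l\shufg{i} f_1\epsilon_k=\delta_{i,l}(g_1\shuffle f_1)\epsilon_k,
\]
reduces to the commutativity of $\shuffle$.

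There is no serious obstacle here: once the reduction to basis vectors of $\K^N$ is made, the whole content of the proposition is that the four combinations ``right/right, right/left, left/right, left/left'' of shuffling on different coordinates all descend to the associativity of $\shuffle$, while the swap identity descends to its commutativity. The only care needed is bookkeeping of which index survives and which disappears under the $\delta$s, but the pattern is symmetric enough that writing out one case (say $(f\shufd{a}g)\shufg{b}h$) makes the other three immediate. I would therefore present one case in detail and indicate that the remaining cases are identical.
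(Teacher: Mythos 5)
Your reduction --- bilinearity in $a,b$ and in $f,g,h$, the explicit formulas $f'\epsilon_k\shufg{i}g'\epsilon_l=\delta_{i,k}(f'\shuffle g')\epsilon_l$ and $f'\epsilon_k\shufd{j}g'\epsilon_l=\delta_{j,l}(f'\shuffle g')\epsilon_k$, then associativity and commutativity of $\shuffle$ --- is exactly the ``direct verification'' the paper intends, and it settles the first, third, fourth and fifth identities. The gap is the step where you assert, without writing it out, that ``the two sides carry the same $\delta$-factors and the same destination component''. For the very case you propose to present in detail, $(f\shufd{a}g)\shufg{b}h$ versus $f\shufd{a}(g\shufg{b}h)$, this assertion is false. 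With the shorthand $u_c=\sum_i c_iu_i$, the $m$-th component of $(f\shufd{a}g)\shufg{b}h$ is $f_b\shuffle g_a\shuffle h_m$, whereas the $m$-th component of $f\shufd{a}(g\shufg{b}h)$ is $f_m\shuffle g_b\shuffle h_a$: the index that survives attaches to a different argument on each side. Concretely, for $N=2$, $a=b=\epsilon_1$, $f=f'\epsilon_2$, $g=g'\epsilon_1$, $h=h'\epsilon_1$, one finds
$$(f\shufd{1}g)\shufg{1}h=(f'\shuffle g')\epsilon_2\shufg{1}h'\epsilon_1=0,\qquad f\shufd{1}(g\shufg{1}h)=(f'\shuffle g'\shuffle h')\epsilon_2\neq 0.$$

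So the second identity is false as printed; the valid mixed identities are the third one and the commutation $(f\shufd{a}g)\shufg{b}h=(f\shufg{b}h)\shufd{a}g$, whose two sides both have $m$-th component $f_b\shuffle g_a\shuffle h_m$ and which is presumably what was meant. The point your sketch misses is that $\shufg{b}$ reads the index of its \emph{left} argument and outputs into the slot of its right argument, while $\shufd{a}$ does the opposite; of the two ways of interleaving one left product and one right product, only one is an associativity statement, the other is the commutation of a left action with a right action. A ``the pattern is symmetric enough'' argument cannot distinguish these, so you must actually write out each of the four cases --- had you done so for the case you singled out, you would have caught the discrepancy.
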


\begin{proof} Direct verifications, using the associativity and the commutativity of $\shuffle$. \end{proof}

\begin{definition}
Let $a\in \K^N$. We define a Lie bracket on $A$:
$$\forall f,g \in A,\: \:_a[f,g]=f \shufg{a} g-g \shufg{a} f=g\shufd{a} f-f\shufd{a} g.$$
This Lie algebra is denoted by $\g'_a$.
\end{definition}

\textbf{Remark.} If $A$ is an associative commutative algebra and $\g$ is a Lie algebra, then $A\otimes \g$ is a Lie algebra, with the following Lie bracket:
$$\forall f,g \in A,\: x,y\in \g,\: [f\otimes x,g\otimes y]=fg\otimes [x,y].$$
Then, as a Lie algebra, $\g'_a$ is isomorphic to the tensor product of the associative commutative algebra $(T(V),\shuffle)$, and of the Lie algebra $\g_{-a}$.
Consequently, if $a\neq (0,\ldots,0)$, $\g'_a$ is isomorphic to $\g'_{(1,0,\ldots,0)}$.


\subsection{Enveloping algebra of $\g_a$}

Let us apply Lemma \ref{lem12} to the Lie algebra $\g_a$:

\begin{proposition}\label{prop19}
The symmetric algebra $S(\g_a)$ is given an associative product $\blacktriangleleft$ such that for all $i_1,\ldots,i_k,j_1,\ldots,j_l \in [N]$:
$$\epsilon_{i_1}\ldots \epsilon_{i_k}\blacktriangleleft \epsilon_{j_1}\ldots \epsilon_{j_l}
=\sum_{I\subseteq [l]} k(k-1)\ldots (k-|I|+1) \left(\prod_{q\in I} a_{j_q}\right)\left(\prod_{p\notin I} \epsilon_{j_p}\right)\epsilon_{i_1}\ldots \epsilon_{i_k}.$$
The Hopf algebra $(S(\g_a),\blacktriangleleft,\Delta)$ is isomorphic to the enveloping algebra of $\g_a$.
\end{proposition}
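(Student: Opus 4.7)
The plan is to apply Lemma \ref{lem12} directly to the associative algebra $(\g_a,\triangleleft)$ and simplify. Since $\triangleleft$ is associative (one checks $(\epsilon_i\triangleleft\epsilon_j)\triangleleft\epsilon_k=a_ja_k\epsilon_i=\epsilon_i\triangleleft(\epsilon_j\triangleleft\epsilon_k)$), the framework of the previous subsection applies verbatim: the product $\blacktriangleleft$ on $S(\g_a)$ is associative, makes $(S(\g_a),\blacktriangleleft,\Delta)$ into a Hopf algebra, and the Cartier--Quillen--Milnor--Moore argument identifies it with $\U(\g_a,[-,-]_\triangleleft)=\U(\g_a)$. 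So the Hopf algebra statement comes for free, and only the explicit formula needs to be established.

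For the formula, I would start from Lemma \ref{lem12} applied to $x_p=\epsilon_{i_p}$ and $y_q=\epsilon_{j_q}$:
\begin{align*}
\epsilon_{i_1}\ldots\epsilon_{i_k}\blacktriangleleft\epsilon_{j_1}\ldots\epsilon_{j_l}
=\sum_{I\subseteq[l]}\sum_{\theta:I\hookrightarrow[k]}
\Bigl(\prod_{p\notin Im(\theta)}\epsilon_{i_p}\Bigr)
\Bigl(\prod_{q\notin I}\epsilon_{j_q}\Bigr)
\Bigl(\prod_{i\in I}\epsilon_{i_{\theta(i)}}\triangleleft\epsilon_{j_i}\Bigr).
\end{align*}
Now I would substitute $\epsilon_{i_{\theta(i)}}\triangleleft\epsilon_{j_i}=a_{j_i}\epsilon_{i_{\theta(i)}}$. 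Since we are in the symmetric algebra and $\theta$ is injective, the factor
$$\Bigl(\prod_{p\notin Im(\theta)}\epsilon_{i_p}\Bigr)\Bigl(\prod_{i\in I}\epsilon_{i_{\theta(i)}}\Bigr)$$
reassembles into $\epsilon_{i_1}\ldots\epsilon_{i_k}$, independently of $\theta$. The scalar contribution $\prod_{i\in I}a_{j_i}$ depends only on $I$, not on $\theta$. Hence summing over $\theta:I\hookrightarrow[k]$ yields the counting factor $k(k-1)\ldots(k-|I|+1)$, and we obtain the announced formula.

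There is essentially no obstacle here: the work is a bookkeeping exercise, the only subtle point being the observation that commutativity of $S(\g_a)$ collapses the dependence on $\theta$ once the substitution $\epsilon_{i_{\theta(i)}}\triangleleft\epsilon_{j_i}=a_{j_i}\epsilon_{i_{\theta(i)}}$ is made. If one wishes, the formula for $\triangleleft$ itself (without the subset $I$) can also be recorded as the intermediate step
\begin{align*}
\epsilon_{i_1}\ldots\epsilon_{i_k}\triangleleft\epsilon_{j_1}\ldots\epsilon_{j_l}
=k(k-1)\ldots(k-l+1)\Bigl(\prod_{q=1}^l a_{j_q}\Bigr)\epsilon_{i_1}\ldots\epsilon_{i_k},
\end{align*}
valid for $l\leq k$ (and zero otherwise), and the $\blacktriangleleft$-formula is then recovered through the identity $x\blacktriangleleft y=\sum(x\triangleleft y^{(1)})y^{(2)}$ applied to $y=\epsilon_{j_1}\ldots\epsilon_{j_l}$, whose deshuffling coproduct splits $[l]$ into $I\sqcup([l]\setminus I)$.
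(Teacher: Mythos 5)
Your proposal is correct and follows exactly the route the paper intends: Proposition \ref{prop19} is stated in the paper as a direct application of Lemma \ref{lem12} to the product $\epsilon_i\triangleleft\epsilon_j=a_j\epsilon_i$, and your bookkeeping (substituting $a_{j_i}\epsilon_{i_{\theta(i)}}$, using commutativity of $S(\g_a)$ to remove the $\theta$-dependence, and counting the injections $I\hookrightarrow[k]$ to get the falling factorial $k(k-1)\cdots(k-|I|+1)$) is precisely the omitted verification, with the Hopf-algebra identification already supplied by the preceding subsection.
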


The enveloping algebra of $\g_a$ has two distinguished bases, the Poincaré-Birkhoff-Witt basis and the monomial basis:
\begin{align*}
&(\epsilon_{i_1}\blacktriangleleft\ldots \blacktriangleleft \epsilon_{i_k})_{k\geq 0,\: 1\leq i_1\leq \ldots \leq i_k\leq N},
&&(\epsilon_{i_1}\ldots \epsilon_{i_k})_{k\geq 0,\: 1\leq i_1\leq \ldots \leq i_k\leq N}.
\end{align*}
Here is the passage between them.

\begin{proposition}\label{prop20}
Let us fix $n\geq 1$. For all $I=\{i_1<\ldots<i_k\} \subseteq [n]$, we put:
\begin{align*}
\lambda(I)&=(i_1-1)\ldots (i_k-k),&
\mu(I)&=(-1)^k(i_1-1)i_2(i_3+1)\ldots (i_k+k-2).
\end{align*}
We use the following notation: if $[n]\setminus I=\{q_1<\ldots<q_l\}$,
$\displaystyle \prod_{q\notin I}^\blacktriangleleft \epsilon_{i_q}=\epsilon_{i_{q_1}}\blacktriangleleft\ldots \blacktriangleleft \epsilon_{i_{q_l}}$. Then:
\begin{align*}
\epsilon_{i_1}\blacktriangleleft \ldots \blacktriangleleft \epsilon_{i_n}&=\sum_{I\subseteq [n]} \lambda(I)\left(\prod_{p\in I} a_{i_p} \right)
\left(\prod_{q\notin I} \epsilon_{i_q}\right),\\
\epsilon_{i_1}\ldots \epsilon_{i_n}&=\sum_{I\subseteq [n]} \mu(I)\left(\prod_{p\in I} a_{i_p} \right)
\left(\prod_{q\notin I}^\blacktriangleleft \epsilon_{i_q}\right).
\end{align*}
\end{proposition}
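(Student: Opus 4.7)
The plan is to establish both formulas by induction on $n$, leveraging the explicit description of $\blacktriangleleft$ from Lemma \ref{lem12}.

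For the first identity, the base case $n = 1$ is trivial (only $I = \emptyset$ contributes, with $\lambda(\emptyset) = 1$). For the inductive step, I would apply $\blacktriangleleft\, \epsilon_{i_{n+1}}$ on the right to both sides of the hypothesis. Since $S(\g_a)$ is commutative and $\epsilon_{i_q} \triangleleft \epsilon_{i_{n+1}} = a_{i_{n+1}} \epsilon_{i_q}$, Lemma \ref{lem12} with a single right factor specializes, for any monomial $X$ of degree $d$ in $S(\g_a)$, to $X \blacktriangleleft \epsilon_j = X \epsilon_j + d\, a_j X$. Applied to $X_I := \prod_{q \notin I} \epsilon_{i_q}$, which has degree $n - |I|$, this yields
\[
X_I \blacktriangleleft \epsilon_{i_{n+1}} \;=\; X_I\, \epsilon_{i_{n+1}} \;+\; (n - |I|)\, a_{i_{n+1}}\, X_I.
\]
The first summand contributes to the term indexed by $I' = I \subseteq [n+1]$ with $n+1 \notin I'$, and the second to $I' = I \cup \{n+1\}$. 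The numerical identity to check is $\lambda(I)\,(n - |I|) = \lambda(I \cup \{n+1\})$ (as subsets of $[n+1]$), which is immediate from the definition of $\lambda$: appending the element $n+1$ to an $I$ of size $k$ introduces exactly the extra factor $(n+1) - (k+1) = n - k$.

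For the second identity, I would proceed by induction on $n$ again. Rearranging the first identity gives
\[
\epsilon_{i_1} \ldots \epsilon_{i_n} \;=\; \epsilon_{i_1} \blacktriangleleft \ldots \blacktriangleleft \epsilon_{i_n} \;-\; \sum_{\emptyset \neq I \subseteq [n]} \lambda(I) \left(\prod_{p \in I} a_{i_p}\right) X_I,
\]
and each $X_I$ is a commutative monomial in $n - |I| < n$ factors (read off from the ordered complement of $I$), to which the inductive hypothesis applies. Substituting these shorter expressions and collecting coefficients of $\bigl(\prod_{p\in I'} a_{i_p}\bigr) \bigl(\prod_{q\notin I'}^\blacktriangleleft \epsilon_{i_q}\bigr)$ produces a formula of the stated shape. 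Alternatively, since the change-of-basis matrix between the PBW and monomial bases is unitriangular for the filtration by word length, its inverse is uniquely determined, and it suffices to check that the claimed $\mu$ together with $\lambda$ satisfies the corresponding subset-convolution inversion identity.

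The main obstacle will be the combinatorial bookkeeping in the second part: verifying that the specific product defining $\mu(I)$ is indeed the inverse coefficient requires tracking how factors in successive applications of the first identity to the $X_I$ telescope across subsets. This telescoping is less transparent than its counterpart for $\lambda$, so I expect the cleanest argument to use a double induction on $n$ and on $|I|$, isolating the contribution produced by extracting the largest index of $I$ and reducing to the identity at rank $n - 1$.
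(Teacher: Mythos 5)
Your treatment of the first identity is complete and is exactly the paper's argument: induction on $n$, using that for a monomial $X$ of degree $d$ one has $X\blacktriangleleft\epsilon_j=X\epsilon_j+d\,a_jX$, and matching the two resulting terms with $I$ and $I\cup\{n+1\}$.

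For the second identity, however, there is a genuine gap. Your route (b) is indeed the paper's route: since the change of basis is unitriangular, the second formula is equivalent to the subset-convolution inversion
\begin{align*}
\sum_{J\subseteq I}\lambda(J)\,\mu(I\setminus J)=\delta_{I,\emptyset}\qquad\text{for all }I\subseteq[n],
\end{align*}
but you never verify this identity, and it is the \emph{only} place where the explicit product defining $\mu$ enters; without it nothing about the second formula has been proved. The verification is not the elaborate double induction on $n$ and $|I|$ you anticipate --- no induction on $n$ is needed at all. Write $I=\{i_1<\ldots<i_k\}$ and induct on $k$ alone (the cases $k=0,1$ are immediate). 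For $k\geq 2$, split the sum according to whether the maximal element $i_k$ lies in $J$ or in $I\setminus J$: appending $i_k$ as the new maximum to a set of size $m$ multiplies $\lambda$ by $(i_k-m-1)$, while appending it to the complementary set multiplies $\mu$ by $-(i_k+(k-m)-2)$. These two contributions add to the constant $1-k$, independent of $J$, so the whole sum equals $(1-k)$ times the convolution over $I\setminus\{i_k\}$, which vanishes by the induction hypothesis since $I\setminus\{i_k\}\neq\emptyset$. Once this identity is in hand, substituting the first formula into the right-hand side of the claimed second formula collapses the double sum to $\epsilon_{i_1}\ldots\epsilon_{i_n}$, exactly as in the paper. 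I would also caution against your route (a): the coefficients $\lambda(I)$, $\mu(I)$ depend on the positions $i_1,\ldots,i_k$ inside $[n]$ and not merely on the letters, so applying the inductive hypothesis to the shorter monomials $X_I$ forces a relabelling of $[n]\setminus I$ that makes the ``collecting coefficients'' step considerably messier than the convolution argument.
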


\begin{proof} \textit{First step.} Let us prove the first formula by induction on $n$. It is obvious if $n=1$, as $\lambda(\emptyset)=1$ and $\lambda(\{1\})=0$. 
Let us assume the result at rank $n$.
\begin{align*}
\epsilon_{i_1}\blacktriangleleft\ldots \blacktriangleleft\epsilon_{i_{n+1}}&=\sum_{I\subseteq [n]} \lambda(I)
\left(\prod_{p\in I} a_{i_p}\right) \left(\prod_{q\notin I} \epsilon_{i_q}\right)\blacktriangleleft \epsilon_{i_{n+1}}\\
&=\sum_{I\subseteq [n]} \lambda(I)\left(\prod_{p\in I} a_{i_p}\right) \left(\prod_{q\notin I} \epsilon_{i_q} \epsilon_{i_{n+1}}
+(k-|I|) a_{i_{n+1}}\prod_{q\notin I} \epsilon_{i_q}\right)\\
&=\sum_{\substack{I\subseteq [n+1],\\n+1\notin I}} \lambda(I)\left(\prod_{p\in I} a_{i_p} \right)\left(\prod_{q\notin I} \epsilon_{i_p}\right)
+\sum_{\substack{I\subseteq [n+1],\\n+1\in I}} \lambda(I)\left(\prod_{p\in I} a_{i_p} \right)\left(\prod_{q\notin I} \epsilon_{i_p}\right)\\
&=\sum_{I\subseteq [n+1]} \lambda(I)\left(\prod_{p\in I} a_{i_p} \right)\left(\prod_{q\notin I} \epsilon_{i_p}\right).
\end{align*}

\textit{Second step.} Let us prove that for all $I\subseteq [n]$, $\displaystyle \sum_{J\subseteq I} \lambda(J)\mu(I\setminus J)=\delta_{I,\emptyset}$.

We put $I=\{i_1<\ldots<i_k\}$ and we proceed by induction on $k$. As $\lambda(\emptyset)=\mu(\emptyset)=1$, 
the result is obvious at rank $k=0$ and $k=1$. Let us assume the result at rank $k-1$, with $k\geq 2$. 
\begin{align*}
\sum_{J \subseteq I} \lambda(J)\mu(I\setminus J)&=\sum_{\substack{J \subseteq I,\\ i_k\in J}} \lambda(J)\mu(I\setminus J)
+\sum_{\substack{J \subseteq I,\\ i_k\notin J}} \lambda(J)\mu(I\setminus J)\\
&=\sum_{J\subseteq I\setminus\{i_k\}}\lambda(J\cup \{i_k\})\mu(I\setminus \{i_k\}\setminus J)
+\sum_{J\subseteq I\setminus\{i_k\}}\lambda(J)\mu(I\setminus J)\\
&=\sum_{J\subseteq I\setminus\{i_k\}}\lambda(J)(i_k-|J|)\mu(I\setminus\{i_k\}\setminus J)\\
&-\sum_{J\subseteq I\setminus\{i_k\}}\lambda(J)\mu(I\setminus \{i_k\}\setminus J)(i_k+|I\setminus \{i_k\}\setminus J|+1)\\
&=\sum_{J\subseteq I\setminus\{i_k\}}\lambda(J)\mu(I\setminus \{i_k\}\setminus J)(i_k-|J|-i_k-|I|+1+|J|-1)\\
&=-|I|\sum_{J\subseteq I\setminus\{i_k\}}\lambda(J)\mu(I\setminus \{i_k\}\setminus J)\\
&=0.
\end{align*}
Therefore:
\begin{align*}
\sum_{I\subseteq [n]} \mu(I) \left(\prod_{p\in I} a_{i_p}\right)\left(\prod_{q\notin I}^\blacktriangleleft \epsilon_{i_q}\right)
&=\sum_{I\subseteq [n]}\sum_{J\subseteq [n]\setminus I} \mu(I)\lambda(J) \left(\prod_{p\in I} a_{i_p}\right)
\left(\prod_{p\in J} a_{i_p}\right)\left(\prod_{q\in [n]\setminus I\setminus J} \epsilon_{i_q}\right)\\
&=\sum_{A\sqcup B\sqcup C=[n]} \mu(A)\lambda(B) \left(\prod_{p\in A\sqcup B} a_{i_p}\right) \left(\prod_{q\in C} \epsilon_{i_q}\right)\\
&=\sum_{I\sqcup J=[n]}\left(\underbrace{\sum_{I'\subseteq I} \lambda(I')\mu(I\setminus I')}_{=\delta_{I,\emptyset}}\right)
\left(\prod_{p\in I}a_{i_p}\right) \left(\prod_{q\in J} \epsilon_{i_q}\right)\\
&=\epsilon_{i_1}\ldots \epsilon_{i_n},
\end{align*}
which ends the proof. \end{proof}

\subsection{Modules over $\g_{(1,0,\ldots,0)}$}

\begin{proposition}\label{prop21}\begin{enumerate}
\item Let $V$ be a module over the associative (non unitary) algebra $(\g_{(1,0,\ldots,0)},\triangleleft)$. Then $V=V^{(0)}\oplus V^{(1)}$, with:
\begin{itemize}
\item $\epsilon_1.v=v$ if $v\in V^{(1)}$ and $\epsilon_1.v=0$ if $v\in V^{(0)}$.
\item For all $i\geq 2$, $\epsilon_i.v\in V^{(0)}$ if $v\in V^{(1)}$ and $\epsilon_i.v=0$ if $i\in V^{(0)}$.
\end{itemize}
\item Conversely, let $V=V^{(1)}\oplus V^{(0)}$ be a vector space and let $f_i:V^{(1)}\longrightarrow V^{(0)}$ for all $2\leq i\leq N$.
One defines a structure of $(\g_{(1,0,\ldots,0)},\triangleleft)$-module over $V$:
\begin{align*}
\epsilon_1.v&=\begin{cases}
v\mbox{ if }v\in V^{(1)},\\
0\mbox{ if }v\in V^{(0)};
\end{cases}&
\mbox{ if }i\geq 2,\: \epsilon_i.v&=\begin{cases}
f_i(v)\mbox{ if }v\in V^{(1)},\\
0\mbox{ if }v\in V^{(0)}.
\end{cases}&
\end{align*}
Shortly:
\begin{align*}
\epsilon_1&: \left[\begin{array}{cc}
0&0\\
0&Id
\end{array}\right],&
\forall i\geq 2,\:\epsilon_i&: \left[\begin{array}{cc}
0&f_i\\
0&0
\end{array}\right].
\end{align*}\end{enumerate}\end{proposition}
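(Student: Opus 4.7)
The plan rests on the very sparse multiplication table of $(\g_{(1,0,\ldots,0)},\triangleleft)$: one has $\epsilon_1\triangleleft \epsilon_1=\epsilon_1$, $\epsilon_i\triangleleft \epsilon_1=\epsilon_i$ for every $i\in[N]$, and $\epsilon_i\triangleleft \epsilon_j=0$ as soon as $j\geq 2$. In particular, $\epsilon_1$ is idempotent, and right multiplication by any $\epsilon_j$ with $j\geq 2$ is zero. This skeletal structure forces any left module to decompose in the asserted way.

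For part 1, I would first use that $\epsilon_1\triangleleft \epsilon_1=\epsilon_1$ implies the operator $v\mapsto \epsilon_1.v$ on $V$ is idempotent. This yields the direct sum decomposition $V=V^{(1)}\oplus V^{(0)}$, where $V^{(1)}$ is its image and $V^{(0)}$ its kernel, and by construction $\epsilon_1$ acts as the identity on $V^{(1)}$ and as zero on $V^{(0)}$. For $i\geq 2$ I would then exploit two relations: $\epsilon_1\triangleleft \epsilon_i=0$ gives $\epsilon_1.(\epsilon_i.v)=0$ for every $v\in V$, so $\epsilon_i$ sends all of $V$ into $V^{(0)}$; and $\epsilon_i\triangleleft \epsilon_1=\epsilon_i$ gives $\epsilon_i.v=\epsilon_i.(\epsilon_1.v)$, which forces $\epsilon_i.v=0$ whenever $v\in V^{(0)}$.

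For part 2, I would define the action of each $\epsilon_i$ by the prescribed block matrices and then check the associative-module axiom $(\epsilon_i\triangleleft \epsilon_j).v=\epsilon_i.(\epsilon_j.v)$ by a four-case analysis on whether $i,j$ equal $1$ or are $\geq 2$. In every case where $j\geq 2$, one has $\epsilon_j.v\in V^{(0)}$ by the definition of the action, and since both $\epsilon_1$ and each $\epsilon_i$ ($i\geq 2$) annihilate $V^{(0)}$, both sides vanish. The cases $j=1$ reduce, after splitting $v=v_1+v_0$ along $V^{(1)}\oplus V^{(0)}$, to the idempotency of $\epsilon_1$ on $V^{(1)}$ (when $i=1$) or to the definition of $f_i$ on $V^{(1)}$ (when $i\geq 2$).

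There is no genuine obstacle: the only point demanding care is identifying which of the two relations $\epsilon_1\triangleleft \epsilon_i=0$ and $\epsilon_i\triangleleft \epsilon_1=\epsilon_i$ delivers each of the two properties of $\epsilon_i$ (that it lands in $V^{(0)}$, and that it kills $V^{(0)}$), since these are symmetric-looking statements coming from asymmetric identities. Once this is cleanly laid out, both implications are immediate.
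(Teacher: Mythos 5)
Your proof is correct and follows essentially the same route as the paper: the idempotency of $\epsilon_1$ furnishes the projection and hence the splitting $V=V^{(1)}\oplus V^{(0)}$, the relations $\epsilon_1\triangleleft\epsilon_i=0$ and $\epsilon_i\triangleleft\epsilon_1=\epsilon_i$ give the two properties of the $\epsilon_i$ for $i\geq 2$, and the converse is the same direct case check of $(\epsilon_i\triangleleft\epsilon_j).v=\epsilon_i.(\epsilon_j.v)$. No issues.
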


\begin{proof} Note that in $\g_{(1,0,\ldots,0)}$, $\epsilon_i\triangleleft \epsilon_j=\delta_{1,j} \epsilon_i$. \\

1. In particular, $\epsilon_1\triangleleft \epsilon_1=\epsilon_1$. If $F_1:V\longrightarrow V$ is defined by $F_1(v)=\epsilon_1.v$,
then:
$$F_1\circ F_1(v)=\epsilon_1.(\epsilon_1.v)=(\epsilon_1\triangleleft \epsilon_1).v=\epsilon.v=F_1(v),$$
so $F_1$ is a projection, which implies the decomposition of $V$ as $V^{(0)}\oplus V^{(1)}$. Let $x \in V^{(1)}$ and $i\geq 2$. Then
$F_1(\epsilon_i.v)=\epsilon_1.(\epsilon_i.v)=(\epsilon_1\triangleleft \epsilon_i).v=0$, so $\epsilon_i.v \in V^{(0)}$. Let $x \in V^{(0)}$. Then
$\epsilon_i.v=(\epsilon_i\triangleleft \epsilon_1).v=\epsilon_i.F_1(v)=0$, so $\epsilon_i.v=0$.\\

2. Let $i \geq 2$ and $j\in [N]$.  If $v\in V^{(1)}$:
\begin{align*}
\epsilon_1.(\epsilon_1.v)&=v=\epsilon_1.v,&\epsilon_i.(\epsilon_1.v)&=f_i(v)=\epsilon_i.v,&\epsilon_j.(\epsilon_i.v)&=\epsilon_j.f_i(v)=0.v.
\end{align*}
If $v\in V^{(0)}$:
\begin{align*}
\epsilon_1.(\epsilon_1.v)&=0=\epsilon_1.v,&\epsilon_i.(\epsilon_1.v)&=0=\epsilon_i.v,&\epsilon_j.(\epsilon_i.v)&=0=0.v.
\end{align*}
So $V$ is indeed a $(\g_{(1,0,\ldots,0)},\triangleleft)$-module. \end{proof}\\

\textbf{Example.} There are, up to an isomorphism, three indecomposable $(\g_{(1,0)},\triangleleft)$-modules:

$$\begin{array}{c|c|c|c}
\epsilon_1&(0)& (1)&\left(\begin{array}{cc}0&0\\0&1\end{array}\right)\\
\hline \epsilon_2&(0)&(0)&\left(\begin{array}{cc}0&1\\0&0\end{array}\right)
\end{array}$$

\begin{proposition} \label{prop22} (We assume $\K$ algebraically closed).
Let $V$ be an indecomposable finite-dimensional module over the Lie algebra $\g_{(1,0,\ldots,0)}$. There exists a scalar $\lambda$
and a decomposition: 
$$V=V^{(0)}\oplus\ldots \oplus V^{(k)}$$
such that, for all $0\leq p\leq k$:
\begin{itemize}
\item $\epsilon_1\left(V^{(p)}\right) \subseteq V^{(p)}$ and there exists $n\geq 1$ such that $(\epsilon_1-(\lambda+p)Id)^n_{\mid V^{(p)}}=(0)$.
\item If $i\geq 2$, $\epsilon_i\left(V^{(p)}\right)\subseteq V^{(p-1)}$, with the convention $V^{(-1)}=(0)$.
\end{itemize}
\end{proposition}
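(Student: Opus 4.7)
\textbf{Proof plan for Proposition \ref{prop22}.}
The strategy is to decompose $V$ using the generalized eigenspaces of $\epsilon_1$ and then exploit the very restrictive structure of $\g_{(1,0,\ldots,0)}$. From the formula $[\epsilon_i,\epsilon_j]_a=a_j\epsilon_i-a_i\epsilon_j$ with $a=(1,0,\ldots,0)$, the brackets reduce to $[\epsilon_1,\epsilon_i]_a=-\epsilon_i$ for $i\geq 2$ and $[\epsilon_i,\epsilon_j]_a=0$ for $i,j\geq 2$: the vectors $\epsilon_2,\ldots,\epsilon_N$ span an abelian ideal on which $\mathrm{ad}(\epsilon_1)=-\mathrm{Id}$.

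Since $\K$ is algebraically closed and $\dim V<\infty$, I would decompose $V=\bigoplus_{\mu\in\K}V_\mu$ into generalized eigenspaces of $\epsilon_1$. The relation $\epsilon_1\epsilon_i-\epsilon_i\epsilon_1=-\epsilon_i$ on $V$ rearranges as $(\epsilon_1-(\mu-1)\mathrm{Id})\epsilon_i=\epsilon_i(\epsilon_1-\mu\mathrm{Id})$, and by induction $(\epsilon_1-(\mu-1)\mathrm{Id})^n\epsilon_iv=\epsilon_i(\epsilon_1-\mu\mathrm{Id})^nv$. For $v\in V_\mu$ and $n$ large this vanishes, so $\epsilon_iV_\mu\subseteq V_{\mu-1}$ for every $i\geq 2$.

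Next, I group eigenvalues by their class in $\K/\mathbb{Z}$: for each class $\bar\lambda$, the subspace $W_{\bar\lambda}=\bigoplus_{\mu\in\bar\lambda}V_\mu$ is stable under $\epsilon_1$ (trivially) and under each $\epsilon_i$ with $i\geq 2$ (since $\mu\mapsto\mu-1$ preserves the class). Hence $V=\bigoplus_{\bar\lambda}W_{\bar\lambda}$ is a direct sum of $\g_{(1,0,\ldots,0)}$-submodules, and indecomposability forces exactly one class to be nonzero. Thus the eigenvalues of $\epsilon_1$ on $V$ can be listed as $\lambda+p_1<\cdots<\lambda+p_m$ for some $\lambda\in\K$ and integers $p_j$.

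Finally I would show the $p_j$ must be consecutive. Suppose $p_{j+1}\geq p_j+2$, and set $W_1=\bigoplus_{s\leq j}V_{\lambda+p_s}$, $W_2=\bigoplus_{s>j}V_{\lambda+p_s}$. Both are submodules: $W_1$ because each $\epsilon_i$ with $i\geq 2$ strictly lowers the eigenvalue, and $W_2$ because for $s>j$ one has $p_s-1\geq p_{j+1}-1>p_j$, so $V_{\lambda+p_s-1}$ is either zero or indexed by some $p_t$ with $t>j$. This contradicts indecomposability; hence $p_j=p_1+j-1$, and replacing $\lambda$ by $\lambda+p_1$ and setting $V^{(p)}=V_{\lambda+p}$ yields the desired decomposition, with $V^{(-1)}=V_{\lambda-1}=0$ as $\lambda$ is the minimum eigenvalue. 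The main obstacle is the consecutiveness step: one has to recognize that a gap in the spectrum of $\epsilon_1$ is precisely what produces a nontrivial splitting, because each higher $\epsilon_i$ can only shift the eigenvalue down by exactly one.
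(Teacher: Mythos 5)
Your proof is correct and follows essentially the same route as the paper: the generalized eigenspace decomposition of $V$ under $\epsilon_1$, the commutation relation $(\epsilon_1-(\mu-1)\mathrm{Id})\circ\epsilon_i=\epsilon_i\circ(\epsilon_1-\mu\,\mathrm{Id})$ showing each $\epsilon_i$ ($i\geq 2$) lowers the generalized eigenvalue by $1$, and the grouping of eigenvalues into $\mathbb{Z}$-orbits so that indecomposability singles out a single orbit. The only differences are cosmetic: the paper performs the commutation computation inside $\U(\g_{(1,0,\ldots,0)})$ rather than directly on operators, and it simply takes $V^{(p)}=E_{\lambda+p}$ with the spectrum merely \emph{included} in $\{\lambda,\ldots,\lambda+k\}$ (allowing zero components), whereas your additional, also correct, argument that a spectral gap would split the module proves a slightly stronger consecutiveness statement that the proposition does not actually require.
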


\begin{proof} First, observe that in the enveloping algebra of $\g_{(1,0,\ldots,0)}$, if $i\geq 2$ and $\lambda \in \K$:
\begin{align*}
\epsilon_i\blacktriangleleft (\epsilon_1-\lambda)&=\epsilon_i\epsilon_1+\epsilon_i-\lambda \epsilon_i
=\epsilon_i\epsilon_1+(1-\lambda) \epsilon_i=(\epsilon_1-\lambda+1)\blacktriangleleft \epsilon_i.
\end{align*}
Therefore, for all $i\geq 2$, for all $n \in \N$, for all $\lambda \in \K$:
$$\epsilon_i\blacktriangleleft (\epsilon_1-\lambda)^{\blacktriangleleft n}=(\epsilon_1-\lambda+1)^{\blacktriangleleft n}\blacktriangleleft \epsilon_i.$$

Let $V$ be a finite-dimensional module over the Lie algebra $\g_{(1,0,\ldots,0)}$. We denote by $E_\lambda$ the characteristic subspace of eigenvalue
$\lambda$ for the action of $\epsilon_1$. Let us prove that for all $\lambda \in \K$, if $i\geq 2$, $\epsilon_i(E_\lambda)\subseteq E_{\lambda-1}$.
If $x\in E_\lambda$, there exists $n \geq 1$, such that $(\epsilon_1-\lambda Id)^{\blacktriangleleft n}.v=0$. Hence:
$$0=\epsilon_i.((\epsilon_1-\lambda Id)^n.v)=(\epsilon_1-(\lambda-1)Id)^n.(\epsilon_i.v),$$
so $\epsilon_i \in E_{\lambda-1}$.

Let us take now $V$ an indecomposable module, and let $\Lambda$ be the spectrum of the action of $\epsilon_1$. 
The group $\mathbb{Z}$ acts on $\K$ by translation. We consider $\Lambda'=\Lambda+\mathbb{Z}$ and let $\Lambda''$ be a system of representants
of the orbits of $\Lambda'$. Then:
$$V=\bigoplus_{\lambda \in \Lambda''} \underbrace{\left(\bigoplus_{n\in \mathbb{Z}} E_{\lambda+n}\right)}_{V_\lambda}.$$
By the preceding remarks, $V_\lambda$ is a module. As $V$ is indecomposable, $\Lambda''$ is reduced to a single element.
As the spectrum of $\epsilon_1$ is finite, it is included in a set of the form $\{\lambda,\lambda+1,\ldots,\lambda+k\}$.
We then take $V^{(p)}=E_{\lambda+p}$ for all $p$. \end{proof}\\

\textbf{Example.} Let us give the indecomposable modules of $\g_{(1,0)}$ of dimension $\leq 3$. For any $\lambda \in \K$:
\begin{align*}&\begin{array}{c|c}
\epsilon_1&\epsilon_2\\
\hline (\lambda)&(0)\\
\hline \left(\begin{array}{cc}\lambda&0\\0&\lambda+1\end{array}\right)&\left(\begin{array}{cc}0&1\\0&0\end{array}\right)\\
\hline \left(\begin{array}{cc}\lambda&1\\0&\lambda\end{array}\right)&\left(\begin{array}{cc}0&0\\0&0\end{array}\right)\\
\hline \left(\begin{array}{ccc}\lambda&0&0\\0&\lambda+1&0\\0&0&\lambda+2\end{array}\right)
&\left(\begin{array}{ccc}0&1&0\\0&0&1\\0&0&0\end{array}\right)
\end{array}&&\begin{array}{c|c}
\epsilon_1&\epsilon_2\\
\hline \left(\begin{array}{ccc}\lambda&1&0\\0&\lambda&0\\0&0&\lambda+1\end{array}\right)&
\left(\begin{array}{ccc}0&0&1\\0&0&0\\0&0&0\end{array}\right)\\
\hline \left(\begin{array}{ccc}\lambda&0&0\\0&\lambda+1&1\\0&0&\lambda+1\end{array}\right)&
\left(\begin{array}{ccc}0&1&0\\0&0&0\\0&0&0\end{array}\right)\\
\hline \left(\begin{array}{ccc}\lambda&1&0\\0&\lambda&1\\0&0&\lambda\end{array}\right)
&\left(\begin{array}{ccc}0&0&0\\0&0&0\\0&0&0\end{array}\right)
\end{array}\end{align*}

\begin{definition}
Let $V$ be a module over the Lie algebra $\g_a$. The associated algebra morphism is:
$$\phi_V:\left\{\begin{array}{rcl}
\mathcal{U}(\g_a)=(S(\g_a),\blacktriangleleft)&\longrightarrow&End(V)\\
\epsilon_i&\longrightarrow&\left\{\begin{array}{rcl}
V&\longrightarrow&V\\
v&\longrightarrow&\epsilon_i.v.
\end{array}\right.
\end{array}\right.$$
For all $i_1,\ldots,i_k \in [N]$, we put $F_{i_1,\ldots,i_k}=\phi_V(\epsilon_{i_1}\ldots \epsilon_{i_k})$; 
this does not depend on the order on the indices $i_p$.
\end{definition}

By Proposition \ref{prop20}:

\begin{proposition}
For all $i_1,\ldots,i_n \in [N]$:
\begin{align*}
F_{i_1}\circ \ldots \circ F_{i_n}&=\sum_{\substack{I\subseteq [n], \\ I\setminus J=\{j_1<\ldots<j_l\}}} \lambda(I) \left(\prod_{p\in I} a_{i_p}\right)
F_{i_{j_1},\ldots,i_{j_l}},\\
F_{i_1,\ldots,i_n}&=\sum_{\substack{I\subseteq [n], \\ I\setminus J=\{j_1<\ldots<j_l\}}} \mu(I) \left(\prod_{p\in I} a_{i_p}\right)
F_{i_{j_1}}\circ \ldots \circ F_{i_{j_l}}.
\end{align*}
\end{proposition}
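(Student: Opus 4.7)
The plan is to obtain both formulas as the image, under the representation morphism $\phi_V$, of the two identities established in Proposition \ref{prop20}. There is essentially nothing new to prove beyond rewriting what Proposition \ref{prop20} already says.

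More precisely, recall that $\phi_V:(\U(\g_a),\blacktriangleleft)\longrightarrow(\mathrm{End}(V),\circ)$ is an algebra morphism. Consequently, for any $j_1,\ldots,j_l\in[N]$,
\[
\phi_V(\epsilon_{j_1}\blacktriangleleft\ldots\blacktriangleleft\epsilon_{j_l})=F_{j_1}\circ\ldots\circ F_{j_l},
\]
while by the very definition of $F_{j_1,\ldots,j_l}$,
\[
\phi_V(\epsilon_{j_1}\ldots\epsilon_{j_l})=F_{j_1,\ldots,j_l},
\]
where the left-hand product is taken in the commutative symmetric algebra $S(\g_a)$. These two identities already explain the order-independence assertion contained in the definition of $F_{j_1,\ldots,j_l}$.

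For the first formula of the statement, I would apply $\phi_V$ to the identity
\[
\epsilon_{i_1}\blacktriangleleft \ldots \blacktriangleleft \epsilon_{i_n}=\sum_{I\subseteq [n]} \lambda(I)\Big(\prod_{p\in I} a_{i_p} \Big)\Big(\prod_{q\notin I} \epsilon_{i_q}\Big)
\]
of Proposition \ref{prop20}. The left-hand side becomes $F_{i_1}\circ\ldots\circ F_{i_n}$; on the right-hand side, the scalars $\lambda(I)\prod_{p\in I}a_{i_p}$ are untouched, and the symmetric monomial $\prod_{q\notin I}\epsilon_{i_q}$ becomes $F_{i_{j_1},\ldots,i_{j_l}}$, where $[n]\setminus I=\{j_1<\ldots<j_l\}$. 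This gives exactly the first equality.

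Symmetrically, for the second formula, I would apply $\phi_V$ to
\[
\epsilon_{i_1}\ldots \epsilon_{i_n}=\sum_{I\subseteq [n]} \mu(I)\Big(\prod_{p\in I} a_{i_p} \Big)\Big(\prod_{q\notin I}^\blacktriangleleft \epsilon_{i_q}\Big),
\]
converting the left-hand side to $F_{i_1,\ldots,i_n}$ and each iterated $\blacktriangleleft$-product $\prod_{q\notin I}^\blacktriangleleft \epsilon_{i_q}$ into a composition $F_{i_{j_1}}\circ\ldots\circ F_{i_{j_l}}$. There is no genuine obstacle here: the only point one must keep in mind is that the two bases of $\U(\g_a)$ described just before Proposition \ref{prop20} are sent by $\phi_V$ respectively to compositions of the $F_i$'s and to the symmetric tensors $F_{i_1,\ldots,i_n}$, so Proposition \ref{prop20} transfers verbatim to $\mathrm{End}(V)$.
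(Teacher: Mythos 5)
Your proof is correct and is exactly the paper's intended argument: the paper offers no proof beyond the phrase ``By Proposition \ref{prop20}'', meaning precisely that one applies the algebra morphism $\phi_V:(S(\g_a),\blacktriangleleft)\to(\mathrm{End}(V),\circ)$ to the two change-of-basis identities, sending $\blacktriangleleft$-products to compositions and symmetric monomials to the operators $F_{i_1,\ldots,i_k}$.
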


When $V$ is a module over the associative algebra $(\g_A,\triangleleft)$, these morphisms are easy to describe:

\begin{proposition}\label{prop25}
Let $V$ be a module over the associative algebra $(\g_a,\triangleleft)$; it is also a module over the Lie algebra $(\g_a,[-,-]_a)$. 
For all $k\geq 2$, $i_1,\ldots,i_k \in [N]$, $F_{i_1,\ldots,i_k}=0$.
\end{proposition}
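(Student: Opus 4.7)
The plan is a short induction on $k$, exploiting the fact that in an associative $(\g_a,\triangleleft)$-module, the action of $\g_a$ on itself collapses dramatically: for any $v\in V$ and $i,j\in[N]$,
\[
F_i\circ F_j(v)=\epsilon_i.(\epsilon_j.v)=(\epsilon_i\triangleleft\epsilon_j).v=a_j\,\epsilon_i.v=a_j F_i(v),
\]
so $F_i\circ F_j=a_j F_i$ as endomorphisms of $V$. This is the only input from the hypothesis that $V$ is an associative module; once we have it, everything else is algebraic manipulation in $\mathcal{U}(\g_a)=(S(\g_a),\blacktriangleleft)$.

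The second ingredient is the specialization of Lemma \ref{lem12} to the case $l=1$: for any $k\geq 1$,
\[
x_1\cdots x_k\blacktriangleleft y_1=x_1\cdots x_k y_1+\sum_{m=1}^k\Bigl(\prod_{i\neq m}x_i\Bigr)(x_m\triangleleft y_1).
\]
Applied to $x_p=\epsilon_{i_p}$ and $y_1=\epsilon_{i_{k+1}}$, and using $\epsilon_{i_m}\triangleleft\epsilon_{i_{k+1}}=a_{i_{k+1}}\epsilon_{i_m}$, this collapses to
\[
\epsilon_{i_1}\cdots\epsilon_{i_k}\blacktriangleleft\epsilon_{i_{k+1}}=\epsilon_{i_1}\cdots\epsilon_{i_{k+1}}+k\,a_{i_{k+1}}\,\epsilon_{i_1}\cdots\epsilon_{i_k}.
\]
Applying the algebra morphism $\phi_V$ and using $\phi_V(\epsilon_{i_1}\cdots\epsilon_{i_k})=F_{i_1,\ldots,i_k}$, we obtain the key recursion
\[
F_{i_1,\ldots,i_k}\circ F_{i_{k+1}}=F_{i_1,\ldots,i_{k+1}}+k\,a_{i_{k+1}}\,F_{i_1,\ldots,i_k}.
\]

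The proof now proceeds by induction on $k\geq 2$. For the base case $k=2$, the recursion reads $F_{i_1}\circ F_{i_2}=F_{i_1,i_2}+a_{i_2}F_{i_1}$, while the first paragraph gives $F_{i_1}\circ F_{i_2}=a_{i_2}F_{i_1}$; subtracting yields $F_{i_1,i_2}=0$. For the inductive step, assume $F_{j_1,\ldots,j_m}=0$ whenever $2\leq m\leq k$. Then in the recursion above, the left-hand side $F_{i_1,\ldots,i_k}\circ F_{i_{k+1}}$ vanishes (since $k\geq 2$), and the second term $k a_{i_{k+1}}F_{i_1,\ldots,i_k}$ also vanishes, so $F_{i_1,\ldots,i_{k+1}}=0$.

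I do not expect any significant obstacle here; the only point that requires a moment of care is the correct specialization of Lemma \ref{lem12} and checking that the coefficient $k$ in the collapsed formula is accurate, which is a direct sum over the $k$ choices of the position into which $\epsilon_{i_{k+1}}$ is contracted via $\triangleleft$.
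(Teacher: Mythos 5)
Your proof is correct and follows essentially the same route as the paper: establish $F_{i}\circ F_{j}=a_jF_i$ from associativity of the module, use the identity $\epsilon_{i_1}\cdots\epsilon_{i_k}\blacktriangleleft\epsilon_{i_{k+1}}=\epsilon_{i_1}\cdots\epsilon_{i_{k+1}}+k\,a_{i_{k+1}}\epsilon_{i_1}\cdots\epsilon_{i_k}$, apply $\phi_V$, and induct on $k$. The only difference is cosmetic: you derive that identity explicitly from Lemma~\ref{lem12} with $l=1$, whereas the paper states it directly.
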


\begin{proof} As $V$ is a module over the associative algebra $(\g_a,\triangleleft)$, for any $i_1,i_2\in [N]$:
$$F_{i_1}\circ F_{i_2}=a_{i_2} F_{i_1}.$$
We proceed by induction on $k$. If $k=2$, $\epsilon_{i_1}\epsilon_{i_2}=\epsilon_{i_1} \blacktriangleleft \epsilon_{i_2}-a_{i_2} \epsilon_{i_1}$, so:
$$F_{i_1,i_2}=F_{i_1} \circ F_{i_2}-a_{i_2}F_{i_1}=a_{i_2}F_{i_1}-a_{i_2}F_{i_1}=0.$$
Let us assume the result at rank $k$. Then $\epsilon_1\ldots \epsilon_{i_{k+1}}=\epsilon_{i_1}\ldots \epsilon_{i_k} \blacktriangleleft \epsilon_{i_{k+1}}
-k a_{i_{k+1}} \epsilon_{i_1}\ldots \epsilon_{i_k}$, 
and $F_{i_1,\ldots,i_{k+1}}=F_{i_1,\ldots,i_k} \circ F_{i_{k+1}}-ka_{i_{k+1}} F_{i_1,\ldots,i_k}=0$. \end{proof}

\section{A family of post-Lie algebras}

\subsection{Reminders}

We defined in \cite{Foissy2} a family of pre-Lie algebras, associated to endomorphisms. Let us briefly recall this construction.

\begin{proposition}
Let $V$ be a vector space and $F:V\longrightarrow V$ be an endomorphism. We define a product $*$ on $T(V)$:
for all $f,g \in T(V)$, for all $x\in V$,
\begin{align*}
\emptyset*g&=0,&
xf*g&=x(f*g)+F(x)(f\shuffle g).
\end{align*}
This product is pre-Lie. The pre-Lie algebra $(T(V),*)$ is denoted by $T(V,F)$. Moreover, for all $f,g,h\in T(V,F)$:
$$(f\shuffle g)*h=(f*h)\shuffle g+f\shuffle (g*h).$$
\end{proposition}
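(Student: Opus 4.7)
The plan is to prove the two claims in the opposite order of their statement: first establish the derivation property of $*$ with respect to $\shuffle$, then use it to deduce the pre-Lie identity by a direct computation on a word with one letter peeled off.

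For the derivation property $(f\shuffle g)*h=(f*h)\shuffle g+f\shuffle(g*h)$, I would work on words and induct on the total length $|f|+|g|$. The base case and the two cases where one of $f,g$ is empty are immediate from $\emptyset*h=0$ and $\emptyset\shuffle u=u$. The real case is $f=xf'$ and $g=yg'$. Using the recursive shuffle identity $(xf')\shuffle(yg')=x(f'\shuffle yg')+y(xf'\shuffle g')$ on the left and expanding via the definition $zw*h=z(w*h)+F(z)(w\shuffle h)$, one obtains four groups of terms, indexed by $x$, $y$, $F(x)$ and $F(y)$. The two terms initiated by $F(x)$ and $F(y)$ carry $(f'\shuffle g)\shuffle h$ and $(f\shuffle g')\shuffle h$. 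On the right-hand side, expanding $f*h$ and $g*h$ and shuffling produces matching $x$- and $y$-initiated pieces (after applying the induction hypothesis to the strictly smaller pairs $(f',g)$ and $(f,g')$), plus $F(x)((f'\shuffle h)\shuffle g)$ and $F(y)((f\shuffle g'\shuffle h))$. The two sides then agree on the nose using the associativity and commutativity of $\shuffle$.

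For the pre-Lie identity $(f*g)*h-f*(g*h)=(f*h)*g-f*(h*g)$, I would induct on $|f|$. The case $f=\emptyset$ is immediate since $\emptyset*u=0$. For the step with $f=xf'$, I expand
\[
(xf'*g)*h-xf'*(g*h) = x\bigl[(f'*g)*h-f'*(g*h)\bigr]+F(x)\bigl[(f'*g)\shuffle h+(f'\shuffle g)*h-f'\shuffle(g*h)\bigr]+F(F(x))\bigl((f'\shuffle g)\shuffle h\bigr).
\]
The first bracket is symmetric in $(g,h)$ by the induction hypothesis. The last $F(F(x))$-term is symmetric in $(g,h)$ by commutativity of $\shuffle$. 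For the middle $F(x)$-term, I substitute the derivation property $(f'\shuffle g)*h=(f'*h)\shuffle g+f'\shuffle(g*h)$, which cancels the $f'\shuffle(g*h)$ piece and leaves $F(x)[(f'*g)\shuffle h+(f'*h)\shuffle g]$, again symmetric in $(g,h)$. Swapping $g\leftrightarrow h$ then yields the pre-Lie identity.

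The main obstacle is the bookkeeping in the derivation-property induction, where the recursive shuffle splits a word pair into two smaller pairs and each side of the identity must be expanded and regrouped so that the induction hypothesis applies to strictly smaller pairs and the leftover $F$-terms line up via associativity/commutativity of $\shuffle$. Once the derivation property is in hand, the pre-Lie identity follows almost mechanically, with the derivation property doing all the real work by collapsing the asymmetric term $f'\shuffle(g*h)$.
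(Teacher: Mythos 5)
Your proof is correct. The paper itself gives no argument for this proposition --- it is recalled verbatim from \cite{Foissy2} --- so there is nothing internal to compare against, but your self-contained verification is the natural one: the derivation property by induction on $|f|+|g|$ using the recursive expansion of $xf'\shuffle yg'$, with the two $F$-initiated leftovers matching via associativity and commutativity of $\shuffle$, and then the pre-Lie identity by peeling one letter off $f$, where the only non-obviously symmetric term $F(x)\bigl[(f'*g)\shuffle h+(f'\shuffle g)*h-f'\shuffle(g*h)\bigr]$ collapses to $F(x)\bigl[(f'*g)\shuffle h+(f'*h)\shuffle g\bigr]$ exactly as you say. This is also consistent with the generalized statement and proof the paper does carry out for $T(V)^N$ (Proposition \ref{prop28}, Corollary \ref{cor29} and Theorem \ref{theo30}), of which the present proposition is the case $N=1$, $a=0$.
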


We also proved the following result:

\begin{proposition}\label{prop27}
Let $k,l \geq 0$. 
\begin{itemize}
\item The set $Sh(k,l)$ of $(k,l)$-shuffles is the set of permutation $\sigma \in \mathfrak{S}_{k+l}$ such that
$\sigma(1)<\ldots<\sigma(k)$ and $\sigma(k+1)<\ldots <\sigma(k+l)$.
\item If $\sigma\in Sh(k,l)$, we put $m_k(\sigma)=\max\{i\in [k]\mid \sigma(1)=1,\ldots,\sigma(i)=i\}$.
In particular, if $\sigma(1)\neq 1$, $m_k(\sigma)=0$. 
\end{itemize}
For all $x_1,\ldots,x_k$, $y_1,\ldots,y_l\in V$:
\begin{align*}
x_1\ldots x_k*y_1\ldots y_l&=\sum_{\sigma \in Sh(k,l)} \sum_{p=1}^{m_k(\sigma)} \left(Id^{\otimes(p_1)}\otimes F\otimes Id^{\otimes(k+l-p)}\right)
\sigma.(x_1\ldots x_ky_1\ldots y_l).
\end{align*}\end{proposition}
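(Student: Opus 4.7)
The plan is to prove the formula by a two-step approach: first establish a cleaner intermediate formula by direct induction on $k$, then reinterpret the indexing in terms of shuffles and the statistic $m_k(\sigma)$.

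The intermediate formula I would prove first is
\begin{equation*}
x_1\ldots x_k * y_1\ldots y_l = \sum_{p=1}^{k} x_1\ldots x_{p-1}\, F(x_p)\, \bigl(x_{p+1}\ldots x_k \shuffle y_1\ldots y_l\bigr).
\end{equation*}
This is immediate by induction on $k$. For $k=0$, the left-hand side vanishes by definition ($\emptyset * g = 0$) and the right-hand side is an empty sum. For the inductive step, the defining relation gives
\begin{equation*}
x_1\ldots x_k * y_1\ldots y_l = x_1(x_2\ldots x_k * y_1\ldots y_l) + F(x_1)(x_2\ldots x_k \shuffle y_1\ldots y_l);
\end{equation*}
applying the induction hypothesis to $x_2\ldots x_k * y_1\ldots y_l$ and relabeling indices reproduces exactly the right-hand side of the claim, with the term $F(x_1)(x_2\ldots x_k \shuffle y_1\ldots y_l)$ contributing the $p=1$ summand and the other terms contributing $p=2,\ldots,k$.

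Next I would rewrite the intermediate formula in the shuffle language. Expanding $x_{p+1}\ldots x_k \shuffle y_1\ldots y_l$ as a sum over $(k-p,l)$-shuffles $\tau$, each term $x_1\ldots x_{p-1}F(x_p)\,\tau.(x_{p+1}\ldots x_k y_1\ldots y_l)$ is precisely $\bigl(\mathrm{Id}^{\otimes(p-1)}\otimes F\otimes \mathrm{Id}^{\otimes(k+l-p)}\bigr)\sigma.(x_1\ldots x_k y_1\ldots y_l)$ for the unique $(k,l)$-shuffle $\sigma\in \mathfrak{S}_{k+l}$ that fixes $1,\ldots,p$ pointwise and agrees with $\tau$ (shifted by $p$) on the remaining positions. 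This correspondence is a bijection between $(k-p,l)$-shuffles and $(k,l)$-shuffles $\sigma$ satisfying $\sigma(i)=i$ for $i\le p$, which is exactly the condition $m_k(\sigma)\geq p$.

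Therefore the double sum reorganizes as
\begin{equation*}
\sum_{p=1}^{k}\sum_{\substack{\sigma\in Sh(k,l)\\ m_k(\sigma)\geq p}} \bigl(\mathrm{Id}^{\otimes(p-1)}\otimes F\otimes \mathrm{Id}^{\otimes(k+l-p)}\bigr)\sigma.(x_1\ldots x_k y_1\ldots y_l) = \sum_{\sigma\in Sh(k,l)}\sum_{p=1}^{m_k(\sigma)} (\ldots),
\end{equation*}
which is the stated formula. The only real obstacle is notational bookkeeping: checking carefully that the sub-shuffle $\tau$ applied to $x_{p+1}\ldots x_ky_1\ldots y_l$ really does match the restriction of $\sigma$ to the last $k+l-p$ positions, and that $F$ indeed lands at position $p$ under the tensor-factor convention. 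Once this bijective indexing is set up, no further computation is needed.
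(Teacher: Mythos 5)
Your proof is correct, and it is essentially the paper's own argument: Proposition \ref{prop27} is merely quoted from \cite{Foissy2}, but the in-text proof of its $N$-component analogue (Proposition \ref{prop28}) is exactly the same induction on $k$ via the recursion $xf*g=x(f*g)+F(x)(f\shuffle g)$, with the term $F(x_1)(x_2\ldots x_k\shuffle y_1\ldots y_l)$ supplying the $p=1$ contributions and the shuffles with $\sigma(1)=1$ supplying $p\geq 2$. Your intermediate formula $\sum_{p=1}^{k}x_1\ldots x_{p-1}F(x_p)(x_{p+1}\ldots x_k\shuffle y_1\ldots y_l)$ simply unrolls that induction before reindexing over shuffles with $m_k(\sigma)\geq p$, and you correctly read the exponent $p_1$ in the statement as the typo it is for $p-1$.
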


\subsection{Construction}

Let us fix a vector space $V$, a family of $N$ endomorphisms $(F_1,\ldots,F_N)$ of $V$ and $a=(a_1,\ldots,a_N) \in \K^N$.
We define inductively a product $*$ on $T(V)^N$: for all $f,g \in T(V)^N$, $x\in V$, $i\in [N]$,
\begin{align*}
\emptyset \epsilon_i*g&=0,&
xf*g&=x(f*g)+F_1(x)(f \shuffle_1 g)+\ldots+F_N(x)(f\shuffle_N g).
\end{align*}
We define a second product $\bullet$ on $T(V)^N$: 
$$\forall f,g \in T(V)^N,\: f\bullet g=f*g+ f\shufg{a} g.$$

\textbf{Examples.} Let $x,y,z\in V$, $g\in T(V)$, $i,j \in [N]$. Then:
\begin{align*}
x\epsilon_i*g\epsilon_j&=F_j(x)g\epsilon_j,\\
xy\epsilon_i*g\epsilon_j&=(xF_j(y)g+F_j(x)(y\shuffle g)) \epsilon_i,\\
xyz\epsilon_i*g\epsilon_j&=(xyF_j(z)g+xF_j(y)(z\shuffle g)+F_j(x)(yz\shuffle g))\epsilon_i.
\end{align*}

\begin{proposition}\label{prop28}
Let $x_1,\ldots,x_k,y_1,\ldots,y_l \in V$, $i,j \in [N]$.
\begin{align*}
x_1\ldots x_k \epsilon_i*y_1\ldots y_l\epsilon_j&=\sum_{\sigma \in Sh(k,l)} \sum_{p=1}^{m_k(\sigma)} \left(Id^{\otimes(p_1)}\otimes F_j 
\otimes Id^{\otimes(k+l-p)}\right)\sigma.(x_1\ldots x_ky_1\ldots y_l)\epsilon_i.
\end{align*}\end{proposition}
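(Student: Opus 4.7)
The plan is to reduce Proposition \ref{prop28} to Proposition \ref{prop27} applied to the endomorphism $F_j$. The key observation is that when the second argument of $*$ lives in the $j$-th ``coordinate'' of $T(V)^N$, only the term involving $F_j$ and $\shufd{j}$ survives in the inductive definition of $*$, and the $\epsilon_i$-coordinate of the first argument is preserved.

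First I would verify the following reduction: for $f,g\in T(V)$ and $i,j\in[N]$, one has
\begin{equation}\label{eq:reduction}
f\epsilon_i*g\epsilon_j=(f*_jg)\,\epsilon_i,
\end{equation}
where $*_j$ denotes the pre-Lie product on $T(V,F_j)$ recalled in Section 3.1. This is proved by induction on the length of $f$. For $f=\emptyset$, both sides are zero. For $f=xf'$, the inductive definition of $*$ on $T(V)^N$ gives
\[
(xf')\epsilon_i*g\epsilon_j = x(f'\epsilon_i*g\epsilon_j)+\sum_{k=1}^N F_k(x)\bigl(f'\epsilon_i\shufd{k}g\epsilon_j\bigr).
\]
By the very definition of $\shufd{k}$, the term $f'\epsilon_i\shufd{k}g\epsilon_j=\delta_{k,j}(f'\shuffle g)\epsilon_i$, so only $k=j$ contributes, producing $F_j(x)(f'\shuffle g)\epsilon_i$. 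Combining with the induction hypothesis applied to $f'$, we obtain
\[
(xf')\epsilon_i*g\epsilon_j=\bigl(x(f'*_jg)+F_j(x)(f'\shuffle g)\bigr)\epsilon_i,
\]
which is exactly $(xf'*_jg)\epsilon_i$ by the inductive definition of the pre-Lie product on $T(V,F_j)$. This proves \eqref{eq:reduction}.

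Once \eqref{eq:reduction} is established, applying Proposition \ref{prop27} to the endomorphism $F_j$ gives the announced formula immediately. The only step requiring actual work is the single induction above, and it is a straightforward unravelling of definitions; there is no real obstacle in this proof. In fact the main conceptual content is the reduction \eqref{eq:reduction}, which shows that the family of products $(*)$ on $T(V)^N$ is, coordinate by coordinate, assembled from the pre-Lie products $(T(V),*_j)$ of \cite{Foissy2}, with the role of $F$ in Proposition \ref{prop27} played by $F_j$ determined by the ``target'' coordinate $\epsilon_j$.
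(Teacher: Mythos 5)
Your proof is correct, but it is organized differently from the paper's. The paper proves Proposition \ref{prop28} by a direct induction on $k$: it splits the shuffles $\sigma\in Sh(k,l)$ according to whether $\sigma(1)=1$, exactly mirroring the proof of Proposition \ref{prop27} in the new setting, and only afterwards records the identity $f\epsilon_i*g\epsilon_j=(f*_jg)\epsilon_i$ as a remark. You invert this: you first prove that identity by a short induction on the length of $f$ (using that $f'\epsilon_i\shufd{k}g\epsilon_j=\delta_{k,j}(f'\shuffle g)\epsilon_i$ kills all terms except $k=j$ in the inductive definition), and then import the closed shuffle formula wholesale from Proposition \ref{prop27} applied to $F=F_j$. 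Your induction is lighter because all the combinatorics of $Sh(k,l)$ and $m_k(\sigma)$ is delegated to the cited result, whereas the paper's version is self-contained at the cost of repeating that combinatorial argument; conceptually your route also makes transparent that $*$ on $T(V)^N$ is assembled coordinatewise from the pre-Lie products of \cite{Foissy2}, with $F_j$ selected by the target coordinate $\epsilon_j$. Both arguments are complete and rest on the same base case ($f=\emptyset$ gives $0$ on both sides).
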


\begin{proof} By induction on $k$. It is immediate if $k=0$, as both sides are equal to $0$. Let us assume the result at rank $k-1$.
\begin{align*}
x_1\ldots x_k \epsilon_i*y_1\ldots y_l\epsilon_j&=x_1(x_2\ldots x_k\epsilon_i*y_1\ldots y_l\epsilon_j)+F_j(x_1)
(x_2\ldots x_k\shuffle y_1\ldots y_l)\epsilon_i\\
&=\sum_{\substack{\sigma \in Sh(k,l),\\ \sigma(1)=1}}
\sum_{p=2}^{m_k(\sigma)} (Id^{\otimes(p_1)}\otimes F_j 
\otimes Id^{\otimes(k+l-p)})\sigma.(x_1\ldots x_ky_1\ldots y_l)\epsilon_i\\
&+\sum_{\substack{\sigma \in Sh(k,l),\\ \sigma(1)=1}}
(F_j \otimes Id^{\otimes(k+l-1)})\sigma.(x_1\ldots x_ky_1\ldots y_l)\epsilon_i\\
&=\sum_{\substack{\sigma \in Sh(k,l),\\ \sigma(1)=1}}\sum_{p=1}^{m_k(\sigma)} (Id^{\otimes(p_1)}\otimes F_j 
\otimes Id^{\otimes(k+l-p)})\sigma.(x_1\ldots x_ky_1\ldots y_l)\epsilon_i\\
&=\sum_{\sigma \in Sh(k,l)}\sum_{p=1}^{m_k(\sigma)} (Id^{\otimes(p_1)}\otimes F_j 
\otimes Id^{\otimes(k+l-p)})\sigma.(x_1\ldots x_ky_1\ldots y_l)\epsilon_i,
\end{align*}
so the result holds for all $k$. \end{proof}\\

\textbf{Remark.} Let $*_j$ be the pre-Lie product of $T(V,F_j)$, described in \cite{Foissy2}. For 	all $f,g \in T(V)$, for all $i,j \in [N]$:
$$f\epsilon_i*g\epsilon_j=(f*_j g)\epsilon_i.$$

\begin{corollary}\label{cor29}
For all $f,g,h \in T(V)^N$, for all $i\in [N]$:
\begin{align*}
(f \shufg{i} g)*h&=(f*h)\shufg{i} g+f \:\shufg{i} (g*h),\\
(f \shuffle_i g)*h&=(f*h)\shuffle_i g+f\shuffle_i (g*h),\\
(f\shuffle g)*h&=(f*h)\shuffle g+f\shuffle (g*h).
\end{align*}\end{corollary}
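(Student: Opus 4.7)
The plan is to reduce all three identities to the single-endomorphism statement already recorded in the Reminders section (Proposition 26, last line): for any $F : V \to V$ and $f',g',h' \in T(V)$,
\begin{equation*}
(f' \shuffle g') *_F h' = (f' *_F h') \shuffle g' + f' \shuffle (g' *_F h'),
\end{equation*}
where $*_F$ denotes the pre-Lie product of $T(V,F)$. By bilinearity of every operation involved, it suffices to check each identity on basis elements of the form $f = f'\epsilon_k$, $g = g'\epsilon_l$, $h = h'\epsilon_m$ with $f',g',h' \in T(V)$ and $k,l,m \in [N]$.

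The key simplification comes from the remark immediately after Proposition \ref{prop28}: the product $*$ on $T(V)^N$ satisfies $f'\epsilon_k * h'\epsilon_m = (f' *_m h')\epsilon_k$, where $*_m = *_{F_m}$ is the pre-Lie product of $T(V,F_m)$. Thus $*$ preserves the left $\epsilon$-index, and its action on the underlying $T(V)$-component is entirely controlled by the single-endomorphism theory of Proposition 26. Each of the bilinear products $\shufg{i}$, $\shufd{i}$, and the componentwise $\shuffle$ on $T(V)^N$ reduces, on such basis elements, to the ordinary shuffle of $T(V)$ multiplied by an appropriate Kronecker $\delta$ tracking the compatibility of indices.

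For the first identity, both sides become $\delta_{i,k}$ times an identity in $T(V,F_m)$ weighted by $\epsilon_l$: the left-hand side is $\delta_{i,k}((f'\shuffle g') *_m h')\epsilon_l$, while the right-hand side is $\delta_{i,k}\bigl((f' *_m h')\shuffle g' + f' \shuffle (g' *_m h')\bigr)\epsilon_l$, and equality is exactly Proposition 26 applied with $F=F_m$. The second identity is proved identically, with $\delta_{i,l}$ in place of $\delta_{i,k}$ and the surviving $\epsilon$-index being $k$ rather than $l$. Finally, interpreting the $\shuffle$ on $T(V)^N$ componentwise, the third identity reduces to the same calculation multiplied by $\delta_{k,l}$.

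I do not anticipate a real obstacle: once the reduction to the componentwise picture and to the product $*_m$ is made, the statement is a direct application of the already-proven Proposition 26 combined with bookkeeping of the Kronecker factors. The only step requiring care is verifying that the remark after Proposition \ref{prop28} is indeed applicable to each $*$-term appearing on both sides (so that the same index $m$, inherited from $h = h'\epsilon_m$, governs every pre-Lie product produced) — which it is, since $h$ is common to all three terms in each identity.
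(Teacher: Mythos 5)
Your proposal is correct and follows essentially the same route as the paper: reduce to homogeneous elements $f'\epsilon_k$, $g'\epsilon_l$, $h'\epsilon_m$, use the observation that $f'\epsilon_k * h'\epsilon_m = (f' *_m h')\epsilon_m$-index-preserving form $(f'*_m h')\epsilon_k$, and invoke the derivation property of the single-endomorphism pre-Lie product from the Reminders section, tracking the Kronecker deltas. The only cosmetic difference is that the paper deduces the second identity from the first via $\shuffle_i=\shufg{i}^{op}$ rather than repeating the computation.
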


\begin{proof} It is enough to prove these assertions for $f=f'\epsilon_k$, $g=g'\epsilon_l$ and $h=h'\epsilon_m$, with $f',g',h'\in T(V)$.
For the first assertion:
\begin{align*}
(f \:_i \shuffle g)*h&=\delta_{i,k} (f'\shuffle g' \epsilon_l)*h'\epsilon_m\\
&=\delta_{i,k}(f'\shuffle g')*_m h'\epsilon_l\\
&=\delta_{i,k}((f'*_mh')\shuffle g'+f'\shuffle (g'*_m h'))\epsilon_l\\
&=(f*h)\shufg{i} g+f \shufg{i} (g*h).
\end{align*}
The second point is deduced from the first one, as $\shuffle_i=\shufg{i}^{op}$. Finally:
\begin{align*}
(f \:_i \shuffle g)*h&=\delta_{k,l} (f'\shuffle g' \epsilon_l)*h'\epsilon_m\\
&=\delta_{k,l}(f'\shuffle g')*_m h'\epsilon_l\\
&=\delta_{k,l}((f'*_mh')\shuffle g'+f'\shuffle (g'*_m h'))\epsilon_l\\
&=(f*h)\shuffle g+f\shuffle (g*h).
\end{align*}
So the last point holds. \end{proof}

\begin{theorem} \label{theo30}
The following conditions are equivalent:
\begin{enumerate}
\item $(T(V)^N,\bullet)$ is a pre-Lie algebra.
\item $\g'_a=(T(V)^N,\:_a[-,-],*)$ is a post-Lie algebra.
\item $V$ is a module over the Lie algebra $\g_a$, with the action given by $\epsilon_i.v=F_i(v)$.
\end{enumerate}\end{theorem}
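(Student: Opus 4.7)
The strategy is to establish $(2)\Longleftrightarrow(1)$ by direct computation of the associator of $\bullet$ in terms of $*$ and $\shufg{a}$, and then $(2)\Longleftrightarrow(3)$ by reducing the post-Lie axioms to identities on the endomorphisms $F_i$.

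\textbf{Step 1 (EQ2 is automatic).} For all $x,y,z\in T(V)^N$, expand $\:_a[x,y]*z=(x\shufg{a}y)*z-(y\shufg{a}x)*z$. By Corollary \ref{cor29}, $(u\shufg{a}v)*w=(u*w)\shufg{a}v+u\shufg{a}(v*w)$, so this equals $(x*z)\shufg{a}y+x\shufg{a}(y*z)-(y*z)\shufg{a}x-y\shufg{a}(x*z)=\:_a[x*z,y]+\:_a[x,y*z]$. Hence EQ2 holds without any assumption on the $F_i$'s.

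\textbf{Step 2 ($(1)\Longleftrightarrow (2)$).} Using the associativity of $\shufg{a}$ (proved earlier) together with Corollary \ref{cor29}, expand
\[
\mathrm{ass}_\bullet(x,y,z)=(x\bullet y)\bullet z-x\bullet(y\bullet z)=[(x*y)*z-x*(y*z)]+[(x*y)\shufg{a}z-x*(y\shufg{a}z)]+(x*z)\shufg{a}y.
\]
Antisymmetrizing in $y,z$ makes the middle and final $\shufg{a}$-terms pair up and cancel, leaving
\[
\mathrm{ass}_\bullet(x,y,z)-\mathrm{ass}_\bullet(x,z,y)=[(x*y)*z-x*(y*z)-(x*z)*y+x*(z*y)]-x*\:_a[y,z].
\]
Hence $\bullet$ is pre-Lie iff EQ1 holds for the pair $(*,\:_a[-,-])$. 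Combined with Step 1, this is exactly the post-Lie condition, so $(1)\Longleftrightarrow(2)$.

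\textbf{Step 3 ($(2)\Longleftrightarrow(3)$).} In view of Step 1, only EQ1 is at stake. By bilinearity it suffices to test EQ1 on homogeneous elements $x=x'\epsilon_p$, $y=y'\epsilon_q$, $z=z'\epsilon_r$. Using $f\epsilon_i*g\epsilon_j=(f*_j g)\epsilon_i$ and specializing first to a single letter $x'=x\in V$ (so that $x*_j g=F_j(x)g$), one computes that the left-hand side of EQ1 becomes
\[
\bigl(a_qF_r(x)-a_rF_q(x)\bigr)(y'\shuffle z')\epsilon_p,
\]
while all length-$|y'|+|z'|+1$ contributions on the right-hand side collapse (after cancellation of the pre-Lie parts of $*_q$ and $*_r$ acting on the single letter $x$) to
\[
\bigl(F_rF_q(x)-F_qF_r(x)\bigr)(y'\shuffle z')\epsilon_p.
\]
Thus EQ1 on single-letter $x$ is equivalent to $[F_q,F_r]=a_rF_q-a_qF_r$ for every $q,r\in[N]$, which is precisely the statement that $\epsilon_i\mapsto F_i$ extends to a Lie algebra morphism $\g_a\to\mathrm{End}(V)$, i.e.\ condition (3). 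This yields $(2)\Longrightarrow(3)$. For $(3)\Longrightarrow(2)$, one lifts EQ1 from single letters to arbitrary words $x'=x_1\ldots x_k$ by induction on $k$, using the defining recursion $x_1f\epsilon_p*g\epsilon_j=x_1(f\epsilon_p*g\epsilon_j)+F_j(x_1)(f\shuffle g)\epsilon_p$ and the shuffle-compatibility statements of Corollary \ref{cor29}.

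\textbf{Main obstacle.} The base case of Step 3 is the real content: organizing the two associator products $((x*_q y')*_r z')$, $((x*_r z')*_q y')$, etc., so that the pre-Lie symmetries of each $*_j$ wipe out all purely $*_j$-contributions and leave exactly the commutator $[F_r,F_q](x)(y'\shuffle z')$. Once this is done, the inductive step is guided by Proposition \ref{prop28}'s closed formula and is straightforward but bookkeeping-heavy.
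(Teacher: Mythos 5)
Your proof follows the paper's own argument step for step: the automatic validity of (\ref{EQ2}) via Corollary \ref{cor29}, the antisymmetrized associator identity reducing the pre-Lie property of $\bullet$ to (\ref{EQ1}) for $(*,\:_a[-,-])$, the single-letter specialization yielding the commutation relation $[F_q,F_r]=a_rF_q-a_qF_r$ (i.e.\ the $\g_a$-module condition), and an induction on the length of the first argument for the converse implication. The computations you make explicit (Steps 1, 2, and the base case of Step 3) are correct, so this is essentially the paper's proof, with the inductive step of $3\Rightarrow 2$ left as a sketch exactly where the paper carries it out in full.
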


\begin{proof}  By Corollary \ref{cor29}, for all $f,g,h \in \g'_a$, $\:_a[f,g]*h=\:_a[f*h,g]+\:_a[f,g*h]$.\\

$1.\Longleftrightarrow 2$. Let $f,g,h\in \g$.
\begin{align*}
&(f\bullet g)\bullet h-f\bullet (g\bullet h)-(f\bullet h)\bullet g+f\bullet (h\bullet g)\\
&=(f*g)*h-f*(g*h)-(f*h)*g-f*(h*g)\\
&+(f \shufg{a} g)*h-f \shufg{a}(g*h)-(f\shufg{a} h)*g+f\shufg{a}(h*g)\\
&+(f*g)\shufg{a} h-f*(g\shufg{a} h)-(f*h)\shufg{a} g+f*(h\shufg{a} g)\\
&+(f\shufg{a} g)\shufg{a} h-f\shufg{a}(g\shufg{a} h)-(f\shufg{a} g)\shufg{a} h+f\shufg{a}(g \shufg{a} h)\\
&=(f*g)*h-f*(g*h)-(f*h)*g-f*(h*g)\\
&+f*(g \shufg{a} h)-f*(h \shufg{a} g)\\
&+[(f\shufg{a} g)*h-f \shufg{a} (g*h)-(f*h)\shufg{a} g]\\
&+[(f\shufg{a} h)*g-f \shufg{a} (h*g)-(f*g)\shufg{a} h]\\
&-[(f*h)\shufg{a} g-f\shufg{a}(h*g)-(f*g)\shufg{a} h]\\
&+[(f\shufg{a} g)\shufg{a} h-f\shufg{a}(g\shufg{a} h)]-[(f\shufg{a} g)\shufg{a} h-f\shufg{a}(g \shufg{a} h)]\\
&=(f*g)*h-f*(g*h)-(f*h)*g+f*(h*g)-f*\:_a[g,h].
\end{align*}
So $(\g'_a,\bullet)$ is pre-Lie if, and only if, $(\g'_a,\:_a[-,-],*)$ is post-Lie.\\

$2.\Longrightarrow 3$. Let $x,y,v \in V$ and $i,j,k\in [N]$. Then:
\begin{align*}
x\epsilon_i*y\epsilon_j&=F_j(x)y\epsilon_i,&xy\epsilon_i*z\epsilon_k&=xF_k(y)z\epsilon_i+F_k(x)(y\shuffle z)\epsilon_i.\\
x\epsilon_i*yz\epsilon_k&=F_k(x)yz\epsilon_i,
\end{align*}
Hence:
\begin{align*}
(x\epsilon_i*y\epsilon_j)*z\epsilon_k&=F_j(x)y\epsilon_i*z\epsilon_k\\
&=F_j(x)F_k(y)z\epsilon_i+F_k\circ F_j(x)y\shuffle z\epsilon_i,\\
x\epsilon_i*(y\epsilon_j*z\epsilon_k)&=x\epsilon_i*F_k(y)z\epsilon_j\\
&=F_j(x)F_k(y)z\epsilon_i,\\
x\epsilon_i \:_a[y\epsilon_j,z\epsilon_k]&=a_jx\epsilon_i*(y\shuffle z)\epsilon_k-a_kx\epsilon_i*(y\shuffle z)\epsilon_j\\
&=(a_jF_k(x)(y\shuffle z)-a_kF_j(x)(y\shuffle z))\epsilon_i.
\end{align*}
The post-Lie relation (\ref{EQ2}) gives:
\begin{align*}
&(a_jF_k(x)-a_kF_j(x))(y\shuffle z)\\
&=F_j(x)F_k(y)z+F_k\circ F_j(x)(y\shuffle z)-F_j(x)F_k(y)z-F_j\circ F_k(x)(y\shuffle z)\\
&=(F_j\circ F_k-F_k\circ F_j)(x)(y\shuffle z).
\end{align*}
Let $y=z$ be a nonzero element of $V$. Then $y\shuffle z\neq 0$, and we obtain that  for all $x\in V$, $a_jF_k(x)-a_kF_j(x)=(F_j\circ F_k-F_k\circ F_j)(x)$: $V$ is a $\g_a$-module.\\

$3.\Longrightarrow 2$. Let us prove the post-Lie relation (\ref{EQ2}) for $f\epsilon_i$, $g$ and $h$, with $f\in T(V)$, $i\in [N]$, $g,h\in \g'_a$.
We assume that $f$ is a word and we proceed by induction on the length $n$ of $f$. If $n=0$, then $f=\emptyset$ and every term is $0$
in the relation. Let us assume the result at rank $n-1$. We put $f=xf'$, with $x\in V$, and $f'$ a word of length $n-1$.
\begin{align*}
(f*g)*h&=x((f'\epsilon_i*g)*h)+\sum_{p=1}^N F_p(x)((f'\epsilon_i*g)\shuffle_p h)\\
&+\sum_{p=1}^N F_p(x)((f'\epsilon_i \shuffle_p g)*h+\sum_{p,q=1}^N F_q\circ F_p(x)(f'\epsilon_i \shuffle_p g\shuffle_q h),\\
f*(g*h)&=x(f'\epsilon_i*(g*h))+\sum_{p=1}^N F_p(x)(f'\epsilon_i\shuffle_p (g*h)),\\
\sum_{p=1}^N a_p f*(g \:_p\shuffle h)&=\sum_{p=1}^N a_p x(f'\epsilon_i*(g\:_p\shuffle h))+\sum_{p,q=1}^N
a_pF_q(x)(f'\epsilon_i \shuffle_q (g\:_p\shuffle h)).
\end{align*} 
We put:
$$P(f,g,h)=f*(g*h)-(f*g)*h+\sum_{p=1}^N a_p f*(g \:_p\shuffle h).$$
In order to prove the post-Lie relation (\ref{EQ2}), we have to prove that $P(f,g,h)=P(f,h,g)$. First:
\begin{align*}
(f*g)*h&=x((f'\epsilon_i*g)*h)+\sum_{p=1}^N F_p(x)((f'\epsilon_i*g)\shuffle_p h)\\
&+\sum_{p=1}^N F_p(x)((f'\epsilon_i \shuffle_p g)*h+\sum_{p,q=1}^N F_q\circ F_p(x)(f'\epsilon_i\shuffle_p g\shuffle_q h),\\
f*(g*h)&=x(f'\epsilon_i*(g*h))+\sum_{p=1}^N F_p(x)(f'\epsilon_i\shuffle_p (g*h)),\\
\sum_{p=1}^N a_p f*(g \:_p\shuffle h)&=\sum_{p=1}^N a_p x(f'\epsilon_i*(g\:_p\shuffle h))+\sum_{p,q=1}^N
a_pF_q(x)(f'\epsilon_i \shuffle_q (g\:_p\shuffle h)).
\end{align*} 
Consequently:
\begin{align*}
P(f,g,h)&=xP(f',g,h)\\
&+\sum_{p=1}^N F_p(x)(-(f'\epsilon_i*g)\shuffle_p h-((f'\epsilon_i \shuffle_p g)*h+f'\epsilon_i \shuffle_p(g*h))\\
&+\sum_{p,q=1}^N a_pF_q(x)(f'\epsilon_i \shuffle_q g \:_p\shuffle h)-F_q\circ F_p(x)(f'\epsilon_i \shuffle_p g\shuffle_q h)\\
&=xP(f',g,h)-\sum_{p=1}^N F_p(x)((f'\epsilon_i*g)\shuffle_p h+(f'\epsilon_i*h)\shuffle_p g)\\
&+\sum_{p,q=1}^N a_pF_q(x)(f'\epsilon_i \shuffle_q h \shuffle_p g)-F_q\circ F_p(x)(f'\epsilon_i \shuffle_p g\shuffle_q h)\\
&=xP(f',g,h)-\sum_{p=1}^N F_p(x)(f'\epsilon_i*g)\shuffle_p h+(f'\epsilon_i*h)\shuffle_p g)\\
&+\sum_{p,q=1}^N (a_pF_q(x)-F_q\circ F_p(x))(f'\epsilon_i \shuffle_p g \shuffle_q h).
\end{align*}
By the induction hypothesis, $P(f',g,h)=P(f',h,g)$, so the first row is symmetric in $g,h$. 
As $V$ is a $\g_a$-module, $a_pF_q-F_q\circ F_p=a_qF_p-F_p\circ F_q$, so the second row is symmetric in $g,h$, and finally
$P(f,g,h)=P(f,h,g)$: $\g'_a$ is a post-Lie algebra. \end{proof}\\

\textbf{Example.} The post-Lie algebra $\siso$ is associated to $a=(1,0)$, $V=Vect(x_1,x_2)$ and:
\begin{align*}
F_1=&\left(\begin{array}{cc}
0&0\\0&1
\end{array}\right),&
F_2&=\left(\begin{array}{cc}
0&1\\0&0
\end{array}\right).
\end{align*}
As $F_1$ and $F_2$ define a module over the Lie algebra $\g_{(1,0)}$, even in fact over the associative algebra $(\g_{(1,0)},\triangleleft)$,
we obtain indeed a post-Lie algebra. For all $f,g\in T(V)$, for all $i,j \in  \{1,2\}$:
\begin{align*}
\emptyset\epsilon_i*g\epsilon_j&=0,&x_2f\epsilon_i*g\epsilon_1&=x_2(f\epsilon_i*g\epsilon_1)+x_2(f\shuffle g)\epsilon_i,\\
x_1f\epsilon_i*g\epsilon_j&=x_1(f\epsilon_i*g\epsilon_j),&x_2f\epsilon_i*g\epsilon_2&=x_2(f\epsilon_i*g\epsilon_2)+x_1(f\shuffle g)\epsilon_i.
\end{align*}

\subsection{Extension of the post-Lie product}

We now extend the post-Lie product of $\g'_a$ to the enveloping algebra $\U(\g'_a)$.
As this Lie bracket is obtained from an associative product $\triangleleft=\shufg{a}$, we can see $\U(\g'_a)$ as $(S(\g'_a),\blacktriangleleft,\Delta)$.
The post-Lie product $*$ is extended to $\U(\g'_a)$, and we obtain a Hopf algebra $(\U(\g),\circledast,\Delta)$,
isomorphic to $\U(\g'_a,\:_a[-,-]_*)$, with:
$$\forall f,g \in \g,\: \:_a[f,g]_*=\:_a[f,g]+f*g-g*f=f\shufg{a} g+f*g-g\shufg{a} f-g*f.$$
As $\bullet$ is a pre-Lie product, it can also be extended to $S(\g)$ and gives a product $\odot$, making $S(\g'_a)$ a Hopf algebra
isomorphic to $\U(\g'_a,[-,-]_\bullet)$. \\

\textbf{Remark.} Let $f,g \in \g'_a$.
\begin{align*}
[f,g]_\bullet&=f\bullet g-g\bullet f\\
&=f \shufg{a} g+f*g-g \shufg{a} f-g*f\\
&=\:_a[f,g]+f*g-g*f\\
&=\:_a[f,g]_*.
\end{align*}
So $[-,-]_\bullet=\:_a[-,-]_*$.

\begin{lemma} 
Let $f_1,\ldots,f_k,g\in \g'_a$, $k\geq 1$. 
\begin{align*}
(f_1\blacktriangleleft\ldots\blacktriangleleft f_k)*g&=\sum_{p=1}^k f_1\blacktriangleleft\ldots \blacktriangleleft (f_p*g)\blacktriangleleft\ldots\blacktriangleleft f_k,\\
(f_1\ldots f_k)*g&=\sum_{p=1}^k f_1\ldots (f_p*g)\ldots f_k.
\end{align*}\end{lemma}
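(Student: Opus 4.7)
Both identities will be established by induction on $k$, exploiting the fact that any $g \in \g'_a \subseteq \U(\g'_a)$ is primitive, so $\Delta(g) = g \otimes 1 + 1 \otimes g$. For the first identity, the case $k = 1$ is tautological. In the inductive step apply the extension property $(ab)*g = \sum (a*g^{(1)})(b*g^{(2)})$ from the preceding Proposition with $a = f_1 \blacktriangleleft \cdots \blacktriangleleft f_{k-1}$ and $b = f_k$: primitivity of $g$ collapses this to the Leibniz rule
\[
(f_1 \blacktriangleleft \cdots \blacktriangleleft f_k) * g = \bigl((f_1 \blacktriangleleft \cdots \blacktriangleleft f_{k-1})*g\bigr) \blacktriangleleft f_k + (f_1 \blacktriangleleft \cdots \blacktriangleleft f_{k-1}) \blacktriangleleft (f_k * g),
\]
and substitution of the inductive hypothesis for the first summand yields the announced formula.

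For the second identity, $f_1 \cdots f_k$ denotes the commutative symmetric product in $S(\g'_a) = \U(\g'_a)$, which differs from $\blacktriangleleft$ by $\shufg{a}$-corrections: Lemma \ref{lem12} specialized to $l = 1$ gives
\[
f_1 \cdots f_k \blacktriangleleft f_{k+1} = f_1 \cdots f_{k+1} + \sum_{j=1}^{k} f_1 \cdots (f_j \shufg{a} f_{k+1}) \cdots f_k,
\]
so $f_1 \cdots f_{k+1}$ equals a $\blacktriangleleft$-product minus $k$ symmetric-product corrections of length $k$, each containing one factor $f_j \shufg{a} f_{k+1} \in \g'_a$. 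Again induct on $k$, and apply $*g$ to this relation using three ingredients: the Leibniz rule for $\blacktriangleleft$ recovered from primitivity of $g$ in the preceding Proposition, the inductive hypothesis applied both to $(f_1 \cdots f_k)*g$ and to each correction term (legitimate since $f_j \shufg{a} f_{k+1} \in \g'_a$ and $f_p*g \in \g'_a$), and Corollary \ref{cor29} (which passes by linearity from $\shufg{i}$ to $\shufg{a}$) in the form $(f_j \shufg{a} f_{k+1})*g = (f_j*g)\shufg{a} f_{k+1} + f_j \shufg{a} (f_{k+1}*g)$. A direct matching shows that every $\shufg{a}$-correction term is cancelled in a pair, leaving exactly $\sum_{p=1}^{k+1} f_1 \cdots (f_p * g) \cdots f_{k+1}$.

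The principal difficulty is the combinatorial bookkeeping for the second identity: three classes of $\shufg{a}$-corrections appear (those from re-expanding $(f_1\cdots(f_p*g)\cdots f_k)\blacktriangleleft f_{k+1}$ back to a symmetric product, those from $(f_1\cdots f_k)\blacktriangleleft (f_{k+1}*g)$, and those from the inductive hypothesis on the $k$ subtracted corrections where Corollary \ref{cor29} contributes), and one must verify they cancel in matching pairs. A hand check for $k = 2$, where $(f_1 f_2)*g = (f_1 \blacktriangleleft f_2)*g - (f_1 \shufg{a} f_2)*g$ simplifies via the Leibniz rule and Corollary \ref{cor29} to $(f_1*g) f_2 + f_1 (f_2*g)$ after the two $\shufg{a}$-corrections cancel, already exhibits the full mechanism, and the general inductive step requires no further idea beyond careful parameter tracking.
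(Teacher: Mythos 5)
Your proof is correct and follows essentially the same route as the paper's: the first identity is the Leibniz rule for $\blacktriangleleft$ obtained from primitivity of $g$, and the second is proved by induction using the relation $f_1\ldots f_{k+1}=f_1\ldots f_k\blacktriangleleft f_{k+1}-\sum_p f_1\ldots (f_p\shufg{a}f_{k+1})\ldots f_k$ together with Corollary \ref{cor29}, exactly as in the paper. The cancellation bookkeeping you describe (and verify for $k=2$) is the same mechanical computation the paper writes out in full.
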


\begin{proof} The first point comes by the very definition of $*$. For the second point, we proceed by induction on $k$. 
This is obvious if $k=1$. Let us assume the result at rank $k$, $k\geq 1$. Observe that:
$$f_1\ldots f_{k+1}=f_1\ldots f_k\blacktriangleleft f_{k+1}-\sum_{p=1}^k f_1\ldots (f_p \shufg{a} f_{k+1})\ldots f_k,$$
so:
\begin{align*}
f_1\ldots f_{k+1}*g&=(f_1\ldots f_k*g)\blacktriangleleft f_{k+1}
+f_1\ldots f_k\blacktriangleleft (f_{k+1}*g)-\sum_{p=1}^k f_1\ldots (f_p \shufg{a} f_{k+1})\ldots f_k*g\\
&=\sum_{p=1}^k f_1\ldots (f_p*g)\ldots f_k \blacktriangleleft f_{k+1}+f_1\ldots f_k \blacktriangleleft (f_{k+1}*g)\\
&-\sum_{p\neq q} f_1\ldots (f_p \shufg{a} f_{k+1})\ldots (f_q*g)\ldots f_k-\sum_{p=1}^k f_1\ldots ((f_p \shufg{a} f_{k+1})*g)\ldots f_k\\
&=\sum_{p=1}^k f_1\ldots (f_p*g)\ldots f_k f_{k+1}+\sum_{p\neq q} f_1\ldots (f_p \shufg{a} f_{k+1})\ldots (f_q*g)\ldots f_k\\
&+\sum_{p=1}^kf_1\ldots ((f_p*g) \shufg{a} f_{k+1})\ldots f_k-\sum_{p\neq q} f_1\ldots (f_p \shufg{a} f_{k+1})\ldots (f_q*g)\ldots f_k\\
&-\sum_{p=1}^kf_1\ldots ((f_p*g) \shufg{a} f_{k+1})\ldots f_k-\sum_{p=1}^k f_1\ldots (f_p\shufg{a}(f_{k+1}*g))\ldots f_k\\
&+f_1\ldots f_k(f_{k+1}*g)+\sum_{p=1}^k f_1\ldots (f_p \shufg{a}(f_{k+1}*g))\ldots f_k\\
&=\sum_{p=1}^{k+1} f_1\ldots (f_p*g)\ldots f_{k+1}.
\end{align*}
Finally, the result holds for all $k\geq 1$. \end{proof}\\

The following result allows to compute $f*g_1\ldots g_k$ by induction on the length of $f$:

\begin{proposition} \label{prop32}
Let $x\in V$, $k \geq 1$, $f,g_1,\ldots,g_k \in T(V)^N$, $i\in [N]$.
\begin{align*}
\emptyset\epsilon_i*(g_1\blacktriangleleft\ldots \blacktriangleleft g_k)&=0,\\
xf*(g_1\blacktriangleleft\ldots \blacktriangleleft g_k)&=\sum_{\substack{I=\{i_1<\ldots<i_l\}\subseteq [k],\\ j_1,\ldots,j_l\in [N]}}
F_{j_l}\circ \ldots \circ F_{j_1}(x)\left(\left(f*\prod_{i\notin I}^\blacktriangleleft g_i\right)\shufd{j_1} g_{i_1}\ldots \shufd{j_l} g_{i_l}\right);\\
\emptyset\epsilon_i*(g_1\ldots g_k)&=0,\\
xf*(g_1\ldots g_k)&=\sum_{\substack{I=\{i_1<\ldots<i_l\}\subseteq [k],\\ j_1,\ldots,j_l\in [N]}}
F_{j_1,\ldots,j_l}(x)\left(\left(f*\prod_{i\notin I} g_i\right)\shufd{j_1} g_{i_1}\ldots \shufd{j_l} g_{i_l}\right).
\end{align*}\end{proposition}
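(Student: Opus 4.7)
The strategy is to induct on $k$, after dispatching the two degenerate clauses $\emptyset\epsilon_i*g=0$ for the relevant $g$. Both follow from $\emptyset\epsilon_i*g=0$ for $g\in T(V)^N$ (the defining rule) together with the Ebrahimi-Fard extension rule $f*(gy)=(f*g)*y-f*(g*y)$, by a straightforward induction on $k$. The base case $k=1$ of each main formula is immediate: the two subsets $I=\emptyset$ and $I=\{1\}$ on the right-hand side reproduce respectively the two summands $x(f*g_1)$ and $\sum_j F_j(x)(f\shufd{j}g_1)$ of the defining recursion of $*$ on $T(V)^N$.

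For the inductive step of the $\blacktriangleleft$-version, I would set $H=g_1\blacktriangleleft\cdots\blacktriangleleft g_k$ and apply the extension rule to obtain
\[xf*(H\blacktriangleleft g_{k+1})=(xf*H)*g_{k+1}-xf*(H*g_{k+1}).\]
By induction, $xf*H$ is a sum over pairs $(I,(j_1,\ldots,j_l))$ whose typical term is $F_{j_l}\circ\cdots\circ F_{j_1}(x)\cdot\omega_{I,\bar{j}}$ with $\omega_{I,\bar{j}}=\bigl(f*\prod_{i\notin I}^\blacktriangleleft g_i\bigr)\shufd{j_1}g_{i_1}\cdots\shufd{j_l}g_{i_l}\in T(V)^N$. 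Applying $*g_{k+1}$ to such a term via the defining recursion of $*$ yields an outer $F_{j_{l+1}}\circ F_{j_l}\circ\cdots\circ F_{j_1}(x)$ factor multiplying $\omega_{I,\bar{j}}\shufd{j_{l+1}}g_{k+1}$---these provide the new $I'=I\cup\{k+1\}$ contributions at level $k+1$---plus an interior $\omega_{I,\bar{j}}*g_{k+1}$ contribution. I would distribute this latter $*g_{k+1}$ through the shuffle chain using the Leibniz-type rules of Corollary \ref{cor29}, and further split $(f*\prod_{i\notin I}^\blacktriangleleft g_i)*g_{k+1}=f*\bigl((\prod_{i\notin I}^\blacktriangleleft g_i)\blacktriangleleft g_{k+1}\bigr)+f*\bigl((\prod_{i\notin I}^\blacktriangleleft g_i)*g_{k+1}\bigr)$ via the extension rule. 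The first piece yields the $I'=I$ terms (with $k+1$ absorbed into the unshuffled $\blacktriangleleft$-product). The second piece, together with the $\shufd{j_p}(g_{i_p}*g_{k+1})$ corrections and the subtracted $xf*(H*g_{k+1})$---expanded via the preceding lemma as $xf*\sum_p g_1\blacktriangleleft\cdots\blacktriangleleft(g_p*g_{k+1})\blacktriangleleft\cdots\blacktriangleleft g_k$ and handled by the induction hypothesis with $k$ factors---must cancel identically.

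For the polynomial version, I would first establish an analogue of Proposition \ref{prop20} relative to the associative algebra $(\g'_a,\shufg{a})$, writing the commutative monomial $g_1\cdots g_k$ as a $\mu$-weighted sum of iterated $\blacktriangleleft$-products with $\shufg{a}$-insertions. Substituting this expansion into $xf*(g_1\cdots g_k)$ and applying the $\blacktriangleleft$-formula just proved, the conversion of $F_{j_l}\circ\cdots\circ F_{j_1}(x)$ into the symmetric operator $F_{j_1,\ldots,j_l}(x)=\phi_V(\epsilon_{j_1}\cdots\epsilon_{j_l})(x)$ is forced by the combinatorial identity $\sum_{J\subseteq I}\lambda(J)\mu(I\setminus J)=\delta_{I,\emptyset}$ already established in the proof of Proposition \ref{prop20}. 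The principal obstacle is the combinatorial bookkeeping in the $\blacktriangleleft$-induction: one must verify that the three sources of correction terms cancel exactly so that only the intended sum over $I'\subseteq[k+1]$ survives; once this cancellation is organized, the identity is an essentially mechanical verification using the associativity and Leibniz-type rules of the shuffle operators.
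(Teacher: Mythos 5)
Your treatment of the $\blacktriangleleft$-formula is essentially the paper's own proof: the same induction on $k$ driven by $xf*(H\blacktriangleleft g_{k+1})=(xf*H)*g_{k+1}-xf*(H*g_{k+1})$, the same expansion of $H*g_{k+1}$ as $\sum_p g_1\blacktriangleleft\cdots\blacktriangleleft(g_p*g_{k+1})\blacktriangleleft\cdots\blacktriangleleft g_k$, the same splitting of surviving terms into $k+1\in I$ versus $k+1\notin I$, and the same cancellation of corrections via Corollary \ref{cor29}. Where you genuinely diverge is the monomial formula. The paper does not pass through a closed-form change of basis; it runs a second induction of the same shape, starting from $g_1\cdots g_{k+1}=g_1\cdots g_k\blacktriangleleft g_{k+1}-\sum_p g_1\cdots(g_p\shufg{a}g_{k+1})\cdots g_k$, applying the rank-$k$ hypothesis to the modified lists, and converting compositions to symmetric operators at the very last step through the single one-step identity $F_{j_{l+1}}\circ F_{j_1,\ldots,j_l}=F_{j_1,\ldots,j_{l+1}}+\sum_p a_{j_p}F_{j_1,\ldots,\widehat{j_p},\ldots,j_{l+1}}$. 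Your proposed global route (invert the relation of Lemma \ref{lem12} for $(\g'_a,\shufg{a})$, substitute into the $\blacktriangleleft$-formula, and regroup via $\sum_{J\subseteq I}\lambda(J)\mu(I\setminus J)=\delta_{I,\emptyset}$) is workable in spirit but strictly harder than you indicate: Proposition \ref{prop20} inverts cleanly only because $\epsilon_i\triangleleft\epsilon_j=a_j\epsilon_i$ collapses insertions to scalars, whereas $g_p\shufg{a}g_{k+1}$ is a genuinely new element of $\g'_a$, so the PBW inversion there requires iterated insertions indexed by more than subsets of $[k]$, and you would still have to unravel $f*(\text{composite atom})$ and the $\shufd{j}$ of composite atoms with Corollary \ref{cor29} before your combinatorial identity applies. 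The paper's term-by-term recursion buys exactly this economy: it only ever peels off one factor $g_{k+1}$, so only the one-step operator identity is needed. Finally, in both halves your central cancellations are asserted rather than carried out; the pattern you describe for the $\blacktriangleleft$-case is the correct one, but the monomial half as written is a plan rather than a proof.
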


\begin{proof} In order to ease the redaction, we put:
$$I_k=\{(I,j_1,\ldots,j_l)\mid I=\{i_1<\ldots<i_l\}\subseteq [k], \: j_1,\ldots,j_l \in [N]\}.$$
We proceed by induction on $k$. It is immediate if $k=1$. Let us assume the result at rank $k$, $k\geq 1$. Then:
\begin{align*}
\emptyset\epsilon_i*(g_1\blacktriangleleft\ldots \blacktriangleleft g_{k+1})
&=(\emptyset\epsilon_i*(g_1\blacktriangleleft\ldots \blacktriangleleft g_k))*g_{k+1}\\
&-\sum_{p=1}^k\emptyset\epsilon_i*(g_1\blacktriangleleft\ldots \blacktriangleleft (g_p*g_{k+1})\blacktriangleleft\ldots\blacktriangleleft g_k)\\
&=0.
\end{align*}
Moreover:
\begin{align*}
&xf*(g_1\blacktriangleleft\ldots \blacktriangleleft g_{k+1})\\
&=( xf*(g_1\blacktriangleleft\ldots \blacktriangleleft g_k))*g_{k+1}
-\sum_{p=1}^k xf*(g_1\blacktriangleleft\ldots \blacktriangleleft (g_p*g_{k+1})\blacktriangleleft\ldots\blacktriangleleft g_k\\
&=\sum_{I_k} F_{j_l}\circ \ldots \circ F_{j_1}(x)\left(\left( f*\prod_{i\notin J\sqcup\{k+1\}}^\blacktriangleleft g_i\right)
\shufd{j_1} g_{i_1}\ldots \shufd{j_l} g_{i_l}\right)*g_{k+1}\\
&-\sum_{I_k} F_{j_l}\circ \ldots \circ F_{j_1}(x)\left( f*\left(\left(\prod_{i\notin J\sqcup\{k+1\}}^\blacktriangleleft g_i\right)*g_{k+1}\right)
\shufd{j_1} g_{i_1}\ldots \shufd{j_l} g_{i_l}\right)\\
&-\sum_{p=1}^k\sum_{I_k} F_{j_l}\circ \ldots \circ F_{j_1}(x)\left(\left( f*\prod_{i\notin J\sqcup\{k+1\}}^\blacktriangleleft g_i\right)
\shufd{j_1} g_{i_1}\ldots \shufd{j_p} (g_{i_p}*g_{k+1})\ldots \shufd{j_l} g_{i_l}\right)\\
&=\sum_{I_k}\sum_{j_{l+1}\in [N]}
F_{j_l}\circ \ldots \circ F_{j_1}(x)\left(\left( f*\prod_{i\notin J\sqcup\{k+1\}}^\blacktriangleleft g_i\right)
\shufd{j_1} g_{i_1}\ldots \shufd{j_l} g_{i_l}\shufd{j_{l+1}} g_{k+1}\right)\\
&+\sum_{I_k} F_{j_l}\circ \ldots \circ F_{j_1}(x)\\
&\hspace{1cm}\left(\left(\left(f*\prod_{i\notin J\sqcup\{k+1\}}^\blacktriangleleft g_i\right)*g_{k+1}-
f*\left(\left(\prod_{i\notin J\sqcup\{k+1\}}^\blacktriangleleft g_i\right)*g_{k+1}\right)\right)\shufd{j_1} g_{i_1}\ldots \shufd{j_l} g_{i_l}\right)\\
&=\sum_{I_k}\sum_{j_{l+1}\in [N]}F_{j_l}\circ \ldots \circ F_{j_1}(x)\left(\left( f*\prod_{i\notin J\sqcup\{k+1\}}^\blacktriangleleft g_i\right)
\shufd{j_1} g_{i_1}\ldots \shufd{j_l} g_{i_l}\shufd{j_{l+1}} g_{k+1}\right)\\
&+\sum_{I_k}F_{j_l}\circ \ldots \circ F_{j_1}(x)\left(\left( f*\prod_{i\notin J\sqcup\{k+1\}}^\blacktriangleleft g_i\blacktriangleleft g_{k+1}\right)
\shufd{j_1} g_{i_1}\ldots \shufd{j_l} g_{i_l}\shufd{j_l} g_{k+1}\right)\\
&=\sum_{I_{k+1}, \: k+1\in I} F_{j_l}\circ \ldots \circ F_{j_1}(x)\left(\left(f*\prod_{i\notin I}^\blacktriangleleft g_i\right)\shufd{j_1} 
g_{i_1}\ldots \shufd{j_l} g_{i_l}\right)\\
&+\sum_{I_{k+1}, \: k+1\notin I} F_{j_l}\circ \ldots \circ F_{j_1}(x)\left(\left(f*\prod_{i\notin I}^\blacktriangleleft g_i\right)\shufd{j_1} 
g_{i_1}\ldots \shufd{j_l} g_{i_l}\right)\\
&=\sum_{I_{k+1}} F_{j_l}\circ \ldots \circ F_{j_1}(x)\left(\left(f*\prod_{i\notin I}^\blacktriangleleft g_i\right)\shufd{j_1} 
g_{i_1}\ldots \shufd{j_l} g_{i_l}\right).
\end{align*}

So, for all $F\in \U(\g)_+$, $\emptyset\epsilon_i*F=0$.
As $g_1\ldots g_k \in \U(\g)_+$, the first point holds. Let us prove the second point by induction on $k$. The result is immediate if $k=1$.
Let us assume the result at rank $k\geq 1$.
\begin{align*}
&xf*g_1\ldots g_{k+1}\\
&=xf*(g_1\ldots g_k*g_{k+1})-\sum_{p=1}^k xf*(g_1\ldots (g_p \shufg{a} g_{k+1})\ldots g_k)\\
&=(xf*g_1\ldots g_k)*g_{k+1}\sum_{p=1}^k xf*(g_1\ldots (g_p \shufg{a} g_{k+1})\ldots g_k)-\sum_{p=1}^k xf*(g_1\ldots (g_p*g_{k+1})\ldots g_k)\\
&=\sum_{I_k} F_{j_1,\ldots,j_l}(x)\left(\left(\left(f*\prod_{i\notin J\sqcup\{k+1\}} g_i\right)*g_{k+1}\right)\shufd{j_1}g_{i_1}\ldots \shufd{j_l}g_{i_l}\right)\\
&+\sum_{I_k}\sum_{p=1}^k F_{j_1,\ldots,j_l}(x)\left(\left(f*\prod_{i\notin J\sqcup\{k+1\}} g_i\right)\shufd{j_1}g_{i_1}\ldots 
\shufd{j_p}(g_{i_p}*g_{k+1})\ldots \shufd{j_l}g_{i_l}\right)\\
&+\sum_{I_k}\sum_{j_{l+1} \in [N]} F_{j_{l+1}}\circ  F_{j_1,\ldots,j_l}(x)\left(\left(f*\prod_{i\notin J\sqcup\{k+1\}} g_i\right)\shufd{j_1}g_{i_1}\ldots 
\shufd{j_l}g_{i_l}\right)\\
&-\sum_{I_k}F_{j_1,\ldots,j_l}(x)\left(f*\left(\left(\prod_{i\notin J\sqcup\{k+1\}} g_i\right)*g_{k+1}\right)\shufd{j_1}g_{i_1}\ldots 
\shufd{j_l}g_{i_l}\right)\\
&-\sum_{I_k}\sum_{p=1}^k F_{j_1,\ldots,j_l}(x)\left(\left(f*\prod_{i\notin J\sqcup\{k+1\}} g_i\right)\shufd{j_1}g_{i_1}\ldots 
\shufd{j_p}(g_{i_p}*g_{k+1})\ldots \shufd{j_l}g_{i_l}\right)\\
&-\sum_{I_k}\sum_{p=1}^k F_{j_1,\ldots,j_l}(x)\left(\left(f*\prod_{i\notin J\sqcup\{k+1\}} g_i\right)\shufd{j_1}g_{i_1}\ldots 
\shufd{j_p}(g_{i_p}\shufg{a} g_{k+1})\ldots \shufd{j_l}g_{i_l}\right)\\
&-\sum_{I_k} F_{j_1,\ldots,j_l}(x)\left(f*\left(\left(\prod_{i\notin J\sqcup\{k+1\}} g_i\right)*g_{k+1}-\left(\prod_{i\notin J\sqcup\{k+1\}} g_i\right)g_{k+1}
\right)\shufd{j_1}g_{i_1}\ldots\shufd{j_l}g_{i_l}\right)\\
&=\sum_{I_k} F_{j_1,\ldots,j_l}(x)\left(f*\left(\left(\prod_{i\notin J\sqcup\{k+1\}} g_i\right)g_{k+1}\right)\shufd{j_1}g_{i_1}\ldots \shufd{j_l}g_{i_l}\right)\\
&+\sum_{I_k}\sum_{j_{l+1}\in [N]}F_{j_{l+1}}\circ F_{j_1,\ldots,j_l}(x)
\left(\left(f*\left(\prod_{i\notin J\sqcup\{k+1\}} g_i\right)\right)\shufd{j_1}g_{i_1}\ldots \shufd{j_l} g_{i_l} \shufd{j_{l+1}} g_{i_{l+1}}\right)\\
&-\sum_{I_k}\sum_{p=1}^k \sum_{j\in [N]}a_j F_{j_1,\ldots,j_l}(x)
\left(\left(f*\left(\prod_{i\notin J\sqcup\{k+1\}}\right)\right))\shufd{j_1}g_{i_1}\ldots \shufd{j_p} (g_{k+1} \shufd{j} g_p)\ldots \shufd{j_l}g_{i_l}\right)\\
&=\sum_{I_k} F_{j_1,\ldots,j_l}(x)\left(f*\left(\left(\prod_{i\notin J\sqcup\{k+1\}} g_i\right)g_{k+1}\right)\shufd{j_1}g_{i_1}\ldots \shufd{j_l}g_{i_l}\right)\\
&+\sum_{I_k}\sum_{j_{l+1}\in [N]}\left(F_{j_{l+1}}\circ F_{j_1,\ldots,j_l}-\sum_{p=1}^l a_{j_p} F_{j_1,\ldots,\widehat{j_p},\ldots,j_{k+1}}\right)(x)\\
&\hspace{2cm}\left(\left(f*\left(\prod_{i\notin J\sqcup\{k+1\}} g_i\right)\right)\shufd{j_1}g_{i_1}\ldots \shufd{j_l} g_{i_l} \shufd{j_{l+1}} g_{i_{l+1}}\right)
\end{align*}\begin{align*}
&=\sum_{I_{k+1},\: k+1\notin J}F_{j_1,\ldots,j_l}(x)\left(\left(f*\prod_{i\notin I} g_i\right)\shufd{j_1} g_{i_1}\ldots \shufd{j_l} g_{i_l}\right)\\
&+\sum_{I_{k+1},\: k+1\in J}F_{j_1,\ldots,j_l}(x)\left(\left(f*\prod_{i\notin I} g_i\right)\shufd{j_1} g_{i_1}\ldots \shufd{j_l} g_{i_l}\right)\\
&=\sum_{I_{k+1}}F_{j_1,\ldots,j_l}(x)\left(\left(f*\prod_{i\notin I} g_i\right)\shufd{j_1} g_{i_1}\ldots \shufd{j_l} g_{i_l}\right).
\end{align*}
Note that we used $\displaystyle F_{j_{l+1}}\circ F_{j_1,\ldots,j_l}=F_{j_1,\ldots,j_{l+1}}+\sum_{p=1}^la_{j_p} F_{j_1,\ldots,\widehat{j_p},\ldots,j_{k+1}}$.
\end{proof}

\begin{proposition}\label{prop33}
Let $k \geq 1$, $f,g_1,\ldots,g_k \in T(V)^N$. Then:
\begin{align*}
f\bullet g_1\ldots g_k&=f*g_1\ldots g_k+\sum_{p=1}^k (f*g_1\ldots g_{p-1}g_{p+1}\ldots g_k)\shufg{a} g_p.
\end{align*}\end{proposition}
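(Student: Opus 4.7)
The plan is to proceed by induction on $k$, verifying that the right-hand side satisfies the defining recursion of the Oudom–Guin extension of $\bullet$ to $S(\g'_a)$. For $k=1$, the formula reduces to $f\bullet g_1=f*g_1+(f*\emptyset\epsilon)\shufg{a}g_1=f*g_1+f\shufg{a}g_1$, which is exactly the definition of $\bullet$ on $\g'_a$. For the inductive step I would write $G=g_1\cdots g_k$ and $g=g_{k+1}$ (commutative products in $S(\g'_a)$) and apply the pre-Lie extension axiom
$$f\bullet(Gg)=(f\bullet G)\bullet g-f\bullet(G\bullet g),$$
valid because $g$ is primitive. The induction hypothesis gives $f\bullet G=f*G+\sum_p(f*\hat G_p)\shufg{a}g_p$, which lies in $\g'_a$ since $V\bullet S(V)\subseteq V$ in the Oudom–Guin construction, so $(f\bullet G)\bullet g=(f\bullet G)*g+(f\bullet G)\shufg{a}g$.

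The key computational tools for expanding $(f\bullet G)\bullet g$ are Corollary \ref{cor29} (to rewrite $((f*\hat G_p)\shufg{a}g_p)*g$ as $((f*\hat G_p)*g)\shufg{a}g_p+(f*\hat G_p)\shufg{a}(g_p*g)$) and associativity of $\shufg{a}$ (to collapse $((f*\hat G_p)\shufg{a}g_p)\shufg{a}g$). For $G\bullet g$ I would use the derivation property from Lemma \ref{lem3}-3 / Proposition \ref{prop5} to write $G\bullet g=\sum_p\hat G_p(g_p*g)+\sum_p\hat G_p(g_p\shufg{a}g)$, and then apply the induction hypothesis to each term $f\bullet(\hat G_p(g_p*g))$ and $f\bullet(\hat G_p(g_p\shufg{a}g))$, which are $f\bullet$ applied to commutative products of $k$ primitives.

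To match with the target for $k+1$, I must compute the $*$-action on commutative products such as $f*(Gg)$ and $f*(\hat G_p g)$. The crucial observation is that the post-Lie Oudom–Guin relation $f*(F\blacktriangleleft y)=(f*F)*y-f*(F*y)$ is formulated with respect to the enveloping-algebra product $\blacktriangleleft$, not the commutative product. Using Lemma \ref{lem12} applied to the associative product $\shufg{a}$, one has
$$Gg=G\blacktriangleleft g-\sum_p\hat G_p(g_p\shufg{a}g),$$
which lets me convert: $f*(Gg)=(f*G)*g-\sum_p f*(\hat G_p(g_p*g))-\sum_p f*(\hat G_p(g_p\shufg{a}g))$, and analogously for $f*(\hat G_p g)$. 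After substituting everything, most terms cancel pairwise: the $(f*\hat G_p)\shufg{a}(g_p*g)$ and $(f*\hat G_p)\shufg{a}(g_p\shufg{a}g)$ contributions coming from $(f\bullet G)\bullet g$ are exactly cancelled by the ``last slot'' terms of the induction hypothesis applied to $f\bullet(\hat G_p(g_p\bullet g))$, while the remaining double sums match up after a relabeling $p\leftrightarrow q$ using $\hat G_{p,q}=\hat G_{q,p}$.

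The main obstacle is bookkeeping: each expansion produces six or seven terms, and one must see that the four families of surviving terms reorganize correctly. The conceptual subtlety — which is easy to overlook on a first try — is that the post-Lie axiom governs $*$ acting on $\blacktriangleleft$-products rather than commutative products, so one is forced to pass through $\blacktriangleleft$ and absorb the correction $\sum_p\hat G_p(g_p\shufg{a}g)$; it is precisely this correction that matches the $\shufg{a}$-terms generated on the other side by the definition $\bullet=*+\shufg{a}$, making the identity work.
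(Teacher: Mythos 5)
Your proposal is correct and takes essentially the same route as the paper's proof: induction on $k$ via the recursion $f\bullet(Gg)=(f\bullet G)\bullet g-f\bullet(G\bullet g)$, Corollary \ref{cor29} together with associativity of $\shufg{a}$ to expand $(f\bullet G)\bullet g$, and the identity $g_1\ldots g_{k+1}=g_1\ldots g_k\blacktriangleleft g_{k+1}-\sum_{p}g_1\ldots (g_p\shufg{a} g_{k+1})\ldots g_k$ to convert back to commutative products, with exactly the cancellations you describe. The only inessential slip is attributing the derivation property of the extended $\bullet$ over commutative products to Lemma \ref{lem3}-3/Proposition \ref{prop5}; it really follows from the axiom $(fg)*h=\sum\left(f*h^{(1)}\right)\left(g*h^{(2)}\right)$ specialized to primitive $h$.
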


\begin{proof} We proceed by induction on $k$. This is obvious if $k=1$. Let us assume the result at rank $k$, $k\geq 1$.
\begin{align*}
&f\bullet g_1\ldots g_{k+1}\\
&=(f\bullet g_1\ldots g_k)\bullet g_{k+1}-\sum_{p=1}^k f\bullet(g_1\ldots (g_p\bullet g_{k+1})\ldots g_k)\\
&=(f*g_1\ldots g_k)*g_{k+1}+(f*g_1\ldots g_k)\shufg{a} g_{k+1}\\
&+\sum_{p=1}^k ((f*g_1\ldots g_{p-1}g_{p+1}\ldots g_k)\shufg{a} g_p)*g_{k+1}
+\sum_{p=1}^k (f*g_1\ldots g_{p-1}g_{p+1}\ldots g_k)\shufg{a} g_p \shufg{a} g_{k+1}\\
&-\sum_{p=1}^k f*(g_1\ldots (g_p\bullet g_{k+1}\ldots g_k)-\sum_{p=1}^k (f*g_1\ldots g_{p-1}g_{p+1}\ldots g_k)\shuffle (g_p\bullet g_{k+1})\\
&-\sum_{p\neq q} f*(g_1\ldots (g_p\bullet g_{k+1})\ldots \widehat{g_q}\ldots g_k)\shufg{a} g_q\\
&=(f*g_1\ldots g_k)*g_{k+1}+(f*g_1\ldots g_k)\shufg{a} g_{k+1}+\sum_{p=1}^k((f*g_1\ldots g_{p-1}g_{p+1}\ldots g_k)*g_{k+1})\shufg{a} g_p\\
&+\sum_{p=1}^k(f*g_1\ldots g_{p-1}g_{p+1}\ldots g_k)\shufg{a}(g_p*g_{k+1}-g_p\bullet g_{k+1}+g_p\shufg{a} g_{k+1})\\
&-\sum_{p\neq q} f*(g_1\ldots (g_p\bullet g_{k+1})\ldots \widehat{g_q}\ldots g_k)\shufg{a} g_q-\sum_{p=1}^k f*(g_1\ldots (g_p\bullet g_{k+1})\ldots g_k)\\
&=(f*g_1\ldots g_k)*g_{k+1}+(f*g_1\ldots g_k)\shufg{a} g_{k+1}\\
&+\sum_{p=1}^k\left((f*g_1\ldots g_{p-1}g_{p+1}\ldots g_k)*g_{k+1}
-\sum_{q\neq p} f*g_1\ldots g_{p-1}g_{p+1}\ldots (g_k\bullet g_{k+1})\ldots g_k\right)\shufg{a} g_p\\
&-\sum_{p=1}^kf*g_1\ldots (g_p*g_{k+1})\ldots g_k-\sum_{p=1}^k f*g_1\ldots (g_p\shufg{a} g_{k+1})\ldots g_k\\
&=f*\left(g_1\ldots g_k \blacktriangleleft g_{k+1}-\sum_{p=1}^k g_1\ldots (g_p \shufg{a} g_{k+1})\ldots g_k\right)
+\sum_{p=1}^{k+1}(f*g_1\ldots g_{p-1}g_{p+1}\ldots g_{k+1})\shufg{a} g_p
\end{align*}\begin{align*}
&=f*g_1\ldots g_{k+1}+\sum_{p=1}^{k+1}(f*g_1\ldots g_{p-1}g_{p+1}\ldots g_{k+1})\shufg{a} g_p.
\end{align*}
So the result holds for all $k\geq 1$. \end{proof}

\begin{proposition}\label{prop34}
On $S(\g'_a)$, $\circledast=\odot$.
\end{proposition}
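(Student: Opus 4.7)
The plan is to identify $\circledast$ and $\odot$ by the universal property of the enveloping algebra, exploiting that the two Hopf algebra structures on $S(\g'_a)$ share the same underlying coalgebra. First I would note that in both cases the coalgebra structure is the standard cocommutative coproduct $\Delta$ on $S(\g'_a)$ with space of primitives $\g'_a$: for $\circledast$, this is the usual coproduct of $\U(\g'_a)$ identified with $S(\g'_a)$ via Proposition~\ref{prop19}; for $\odot$, this is the coproduct of the Oudom--Guin construction on $S(\g'_a)$.

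Second, I would compare the Lie brackets induced on primitives. For $x,y\in\g'_a$, recalling that $x\blacktriangleleft y=xy+x\shufg{a}y$ and $x\bullet y=x*y+x\shufg{a}y$, one has
\begin{align*}
x\circledast y-y\circledast x&=(x\blacktriangleleft y+x*y)-(y\blacktriangleleft x+y*x)=\:_a[x,y]+x*y-y*x=\:_a[x,y]_*,\\
x\odot y-y\odot x&=(xy+x\bullet y)-(yx+y\bullet x)=x\bullet y-y\bullet x=[x,y]_\bullet,
\end{align*}
and by the remark preceding this proposition, $[x,y]_\bullet=\:_a[x,y]_*$.

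Third, I would invoke Cartier--Quillen--Milnor--Moore. The two Hopf algebras $(S(\g'_a),\circledast,\Delta)$ and $(S(\g'_a),\odot,\Delta)$ are both connected cocommutative, with the same primitive Lie algebra $(\g'_a,\:_a[-,-]_*)$. The inclusion of $\g'_a$ into each, being a Lie morphism from $(\g'_a,\:_a[-,-]_*)$, extends uniquely to Hopf algebra isomorphisms $\Phi_\circledast:\U(\g'_a,\:_a[-,-]_*)\to(S(\g'_a),\circledast,\Delta)$ and $\Phi_\odot:\U(\g'_a,\:_a[-,-]_*)\to(S(\g'_a),\odot,\Delta)$. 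Both are coalgebra maps from the fixed coalgebra $\U(\g'_a,\:_a[-,-]_*)$ into the common coalgebra $(S(\g'_a),\Delta)$, and both restrict to the identity on primitives.

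The main technical point—really the only nontrivial step—is the rigidity statement that a coalgebra map from a connected cocommutative coalgebra (in characteristic zero) into $(S(\g'_a),\Delta)$ is determined by its restriction to the primitives. Granted this, $\Phi_\circledast=\Phi_\odot$ as linear maps, so the identity of $S(\g'_a)$ is an algebra isomorphism from $\circledast$ to $\odot$, whence $\circledast=\odot$.
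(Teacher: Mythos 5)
There is a genuine gap, and it sits exactly where you locate it: the ``rigidity'' claim that a coalgebra morphism from a connected cocommutative coalgebra into $(S(\g'_a),\Delta)$ is determined by its restriction to the primitives is false. Coalgebra endomorphisms of $S(\g'_a)$ fixing $\g'_a$ pointwise need not be the identity: already for $\dim V=1$, $S(V)=\K[x]$ with the binomial coproduct has coalgebra endomorphisms dual to the substitutions $t\mapsto t+t^2+\cdots$ of $\K[[t]]$, all of which fix the primitive $x$. What the coalgebra axiom actually gives, by induction along the coradical filtration, is only that $\Phi_\circledast-\Phi_\odot$ takes values in $Prim(S(\g'_a))=\g'_a$; equivalently, that $f\circledast g-f\odot g$ is primitive for all $f,g$. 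Your first two steps (same coproduct, same induced bracket $[-,-]_\bullet=\:_a[-,-]_*$ on primitives) are correct and give exactly this much, but Cartier--Quillen--Milnor--Moore then only yields that the two Hopf algebras are \emph{isomorphic}, not that the products coincide as bilinear maps.

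This is in fact precisely how the paper's proof begins: it shows by induction that $\Delta(f\circledast g-f\odot g)=(f\circledast g-f\odot g)\otimes 1+1\otimes(f\circledast g-f\odot g)$, so the difference lies in $\g'_a$. The remaining, unavoidable work is to kill this primitive part, which the paper does by an explicit computation: writing $g=g_1\cdots g_l$ and using the closed formulas of Propositions~\ref{prop32} and~\ref{prop33}, one computes the projection $\pi$ onto $\g'_a$ of $f\circledast g=\sum\bigl(f*g^{(1)}\bigr)\blacktriangleleft g^{(2)}$ and rearranges the sum over subsets of $[l]$ to recognize $\pi\bigl((f_1\bullet\prod g_i)\cdots\bigr)=\pi(f\bullet g)=\pi(f\odot g)$. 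To complete your argument you would need to supply this computation (or an equivalent identification of the degree-one components), since no purely formal coalgebra argument can distinguish the zero primitive from a nonzero one.
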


\begin{proof} Let $f,g\in S(\g'_a)$; let us prove that $f\circledast g=f\odot g$. We assume that $f=f_1\ldots f_k$, $g=g_1,\ldots,g_l$, 
with $f_1,\ldots, f_k,g_1,\ldots,g_l \in \g'_a$, and we proceed by induction on $k$. If $k=0$, then $f=1$ and 
$f\circledast g=f\bigcirc \hspace{-3.2mm} \bullet\hspace{1.2mm} g=g$.
Let us assume the result at all ranks $<k$. We proceed by induction on $l$. If $l=0$, then $g=1$ and $f\circledast g=f\odot g=f$.
Let us assume the result at all ranks $<l$. We put:
\begin{align*}
\Delta(f)&=f\otimes 1+1\otimes f+f'\otimes f'',&\Delta(g)&=g\otimes 1+1\otimes g+g'\otimes g''.
\end{align*}
The induction hypothesis on $k$ holds for $f'$ and $f''$ and the induction hypothesis on $l$ holds for $g'$ and $g''$.
From:
$$\Delta(f\circledast g-f\odot g)=f^{(1)}\circledast g^{(1)}\otimes f^{(2)}\circledast g^{(2)}
-f^{(1)}\odot g^{(1)}\otimes f^{(2)}\odot g^{(2)},$$
these two induction hypotheses give:
\begin{align*}
\Delta(f\circledast g-f\odot g)&=(f\circledast g-f\odot g)\otimes 1+1\otimes (f\circledast g-f\odot g).
\end{align*}
So $f\circledast g-f\odot g \in Prim(S(\g'_a))=\g'_a$. Let $\pi$ be the canonical projection on $\g'_a$ in $S(\g'_a)$. We obtain:
\begin{align*}
\pi(f\circledast g)&=\pi\left(\sum_{I\subseteq [l]} \left(f*\prod_{i\in I} g_i\right)\blacktriangleleft \prod_{j\notin I} g_j\right)\\
&=\pi\left(\sum_{[l]=I_0\sqcup \ldots \sqcup I_k}\left(f_1*\prod_{i\in I_1}g_i\right)\ldots
\left(f_k*\prod_{i\in I_k}g_i\right) \blacktriangleleft \prod_{i\in I_0} g_i\right)\\
&=\pi\left(\sum_{[l]=J_1\sqcup \ldots\sqcup J_k}\prod_{p=1}^k
\left(f_p*\prod_{i\in J_k} g_i+\sum_{j_p\in J_p}\left(f_p*\prod_{i\in J_p\setminus\{j_p\}} g_i\right)\shufg{a} g_{j_p}\right)\right)\\
&=\pi\left(\sum_{[l]=J_1\sqcup \ldots\sqcup J_k}\left(\prod_{p=1}^k f_p\bullet\prod_{i\in J_p} g_i\right)\right)\\
&=\pi\left(\left(f_1\bullet g^{(1)}\right)\ldots \left(f_k\bullet g^{(k)}\right)\right)\\
&=\pi(f\bullet g)\\
&=\pi(f\odot g).
\end{align*}
As $f\circledast g-f\odot g \in \g'_a$, $f\circledast g=f\odot g$. \end{proof}

\subsection{Graduation}

We assume in this whole paragraph that $a=(1,0,\ldots,0)$ and $V$ is finite-dimensional. 
We decompose the $\g_a$-module $V$ as a direct sum of indecomposables.
By Proposition \ref{prop22}, decomposing each indecomposables, we obtain a decomposition of $V$ of the form:
$$V=V^{(0)}\oplus \ldots \oplus V^{(k)},$$
with $F_1\left(V^{(p)}\right)\subseteq V^{(p)}$ and $F_i\left(V^{(p)}\right)\subseteq V^{(p-1)}$ for all $i\geq 2$, for all $p\in [k]$. 
We put $V_p=V^{(k+1-p)}$ for all $p\in [k+1]$. This defines a graduation of $V$, which induces a connected graduation of $T(V)$.
For this graduation of $V$, $F_1$ is homogeneous of degree $0$ and $F_i$ is homogeneous of degree $1$ for all $i\geq 2$.
We define a graduation of $\g'_a=T(V)^N$:
$$\forall n\geq 0,\: (\g'_a)_n=T(V)_n\epsilon_1\oplus \bigoplus_{i=2}^N T(V)_{n-1}\epsilon_i.$$
Let $v,w \in T(V)$, homogeneous of respective degree $k$ and $l$. Let $i,j \geq 2$.
Then:
\begin{itemize}
\item $v\epsilon_1$ is homogeneous of degree $k$.
\item $v\epsilon_i$ is homogeneous of degree $k+1$.
\item $w\epsilon_1$ is homogeneous of degree $l$.
\item $w\epsilon_j$ is homogeneous of degree $l+1$.
\end{itemize}
As $v\shuffle w$ is homogeneous of degree $k+l$:
\begin{itemize}
\item $v\epsilon_1 \:_{(1,0,\ldots,0)}\shuffle w\epsilon_1=v\shuffle w \epsilon_1$ is homogeneous of degree $k+l$.
\item $v\epsilon_1 \:_{(1,0,\ldots,0)}\shuffle w\epsilon_j=v\shuffle w \epsilon_j$ is homogeneous of degree $k+l+1$.
\item $v\epsilon_i \:_{(1,0,\ldots,0)}\shuffle w\epsilon_1=0$ is homogeneous of degree $k+l+1$.
\item $v\epsilon_i \:_{(1,0,\ldots,0)}\shuffle w\epsilon_j=0$ is homogeneous of degree $k+l+2$.
\end{itemize}
Consequently, the product $\:_{(1,0,\ldots,0)}\shuffle$ is homogeneous of degree $0$. 
Proposition \ref{prop28} implies that $*$ is homogeneous of degree $0$; summing, $\bullet$ is also homogeneous of degree $0$. Hence:

\begin{proposition}\label{prop35}
The decomposition of $V$ in indecomposable $\g_{(1,0,\ldots,0)}$-modules induces a graduation of the post-Lie algebra $\g'_{(1,0,\ldots,0)}$.
\end{proposition}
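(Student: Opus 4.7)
The plan is to assemble the graduation from the pieces already set up in the paragraph preceding the proposition and then check that the three structural operations on $\g'_{(1,0,\ldots,0)}$ (namely $\shuffle$, $*$, and $\:_a[-,-]$) are all homogeneous of degree $0$.

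First, I would invoke Proposition \ref{prop22} on each indecomposable summand of $V$ to obtain the decomposition $V = V^{(0)} \oplus \ldots \oplus V^{(k)}$ with $F_1(V^{(p)}) \subseteq V^{(p)}$ and $F_i(V^{(p)}) \subseteq V^{(p-1)}$ for $i \geq 2$. After the reindexing $V_p = V^{(k+1-p)}$, each $F_i$ with $i \geq 2$ raises the degree by exactly one, while $F_1$ preserves it. Extending by tensor powers gives a connected graduation of $T(V)$, and the shuffle product $\shuffle$ is homogeneous of degree $0$ since it only rearranges letters. To absorb the degree shift produced by the $F_i$'s for $i\geq 2$, I set $(\g'_a)_n = T(V)_n\epsilon_1 \oplus \bigoplus_{i\geq 2} T(V)_{n-1}\epsilon_i$, which gives a vector space graduation of $T(V)^N$.

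Next I would verify that $\:_{(1,0,\ldots,0)}\shuffle$ is homogeneous of degree $0$. Since $a = (1,0,\ldots,0)$, the definition collapses to $v\epsilon_i \shufg{(1,0,\ldots,0)} w\epsilon_j = \delta_{i,1}(v\shuffle w)\epsilon_j$. A case-by-case check on $i,j \in \{1\}$ versus $\{2,\ldots,N\}$, with $v,w$ homogeneous of degrees $k,l$, confirms the total degree matches on both sides (the four cases are already spelled out in the lead-in paragraph). Consequently the Lie bracket $\:_a[-,-] = \shufg{a} - \shufg{a}^{op}$ is also homogeneous of degree $0$.

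For the post-Lie product $*$, I would appeal to the explicit formula of Proposition \ref{prop28}:
\[
x_1\ldots x_k\epsilon_i * y_1\ldots y_l\epsilon_j = \sum_{\sigma \in Sh(k,l)}\sum_{p=1}^{m_k(\sigma)} (Id^{\otimes(p-1)}\otimes F_j \otimes Id^{\otimes(k+l-p)})\sigma.(x_1\ldots x_k y_1\ldots y_l)\epsilon_i.
\]
Each term on the right has $T(V)$-degree equal to $\deg(x_1\ldots x_k) + \deg(y_1\ldots y_l) + [j \geq 2]$ (the bracket accounting for the one unit raised by $F_j$ when $j \geq 2$), and carries the label $\epsilon_i$. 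Its total degree in $\g'_a$ is therefore $\deg(x_1\ldots x_k) + [i\geq 2] + \deg(y_1\ldots y_l) + [j\geq 2]$, which is exactly the sum of the degrees of the two factors. Summing, $\bullet = * + \shufg{a}$ is homogeneous of degree $0$ as well, completing the proof.

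I expect no real obstacle here: the arithmetic bookkeeping is forced by the way the shift $V_p = V^{(k+1-p)}$ and the correction $T(V)_{n-1}\epsilon_i$ were engineered precisely to absorb the single degree raised by each application of an $F_i$ with $i\geq 2$. The only mild care needed is to keep track of the $\epsilon_1$ versus $\epsilon_{i\geq 2}$ dichotomy in the explicit formulas for $\shufg{a}$ and $*$.
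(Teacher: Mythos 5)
Your proof is correct and follows essentially the same route as the paper: decompose $V$ via Proposition \ref{prop22}, reindex so that $F_1$ has degree $0$ and $F_i$ ($i\geq 2$) has degree $1$, shift the grading on the $\epsilon_i$-components for $i\geq 2$, then check case by case that $\shufg{(1,0,\ldots,0)}$ is homogeneous of degree $0$ and deduce the same for $*$ from the explicit formula of Proposition \ref{prop28}, hence for $\bullet$ by summing. The degree bookkeeping in your verification of $*$ is exactly the point the paper leaves implicit when it says ``Proposition \ref{prop28} implies that $*$ is homogeneous of degree $0$.''
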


We put:
$$P(X)=\sum_{i=1}^{k+1} dim(V_p)X^p \in \K[X].$$
 the formal series of $\g'_{(1,0,\ldots,0)}$ is:
\begin{align*}
R(X)&=\sum_{p=1}^\infty dim((\g'_{(1,0,\ldots,0)})_p)X^p\\
&=\frac{1}{1-P(X)}+(N-1)\frac{X}{1-P(X)}=\frac{1+(N-1)X}{1-P(X)}.
\end{align*}
Note that $R(0)=1$: indeed, $(\g'_{(1,0,\ldots,0)})_0=Vect(\emptyset\epsilon_1)$. 
The augmentation ideal of $\g'_{(1,0,\ldots,0)}$ is:
$$(\g'_{(1,0,\ldots,0)})_+=T(V)_+\times T(V)^{N-1}.$$
This is a graded, connected post-Lie algebra.\\

\textbf{Example.} For the SISO case, $V_1=Vect(x_2)$ and $V_2=Vect(x_1)$. The formal series of $\siso$ is:
$$R_{SISO}(X)=\frac{1+X}{1-X-X^2}=1+2X+3X^2+5X^3+8X^4+13X^5+\ldots$$
Hence, $(dim(\siso)_n)_{n\geq 0}$ is the Fibonacci sequence A000045 \cite{Sloane}. For example:
\begin{align*}
(\siso)_0&=Vect(\emptyset\epsilon_1),\\
(\siso)_1&=Vect(x_2\epsilon_1,\emptyset\epsilon_2),\\
(\siso)_2&=Vect(x_1\epsilon_1,x_2x_2\epsilon_1,x_2\epsilon_2),\\
(\siso)_3&=Vect(x_1x_2\epsilon_1,x_2x_1\epsilon_1,x_2x_2x_2\epsilon_1,x_1\epsilon_2,x_2x_2\epsilon_2).
\end{align*}

\section{Graded dual}

We assume in this section that $a=(1,0,\ldots,0)$. 
The augmentation ideal of $\g'_a$ is denoted by $(\g'_a)_+$; recall that $(\g'_a)_0=Vect(\emptyset \epsilon_1)$.
\begin{itemize}
\item As $(\g'_a)_+$ is a graded, connected Lie algebra, its enveloping algebra $\U((\g'_a)_+)$ 
is a graded, connected Hopf algebra, and its graded dual also is. We denote it by $\h_V$.
\item As an algebra, $\h_V$ is identified with $S((\g'_a)^*)/\langle \emptyset\epsilon_1\rangle$.  We identify $(\g'_a)^*$ with $T(V^*)^N$ via the pairing:
$$\langle f_1\ldots f_k \epsilon_i,x_1\ldots x_l\epsilon_j\rangle=\delta_{i,j} \delta_{k,l} f_1(x_1)\ldots f_k(x_k).$$
\item The coproduct dual of $\odot=\circledast$ is denoted by $\Delta_\bullet$.
\item The dual of the product $\shufd{j}$ defined on $\g'_a$ is denoted by $\Delta_{\shufd{j}}$, defined on $(\g'_a)^*=T(V^*)^N$.
\item We define a coproduct $\Delta_*$ on $S((\g'_a)_+^*)$, dual of the right action $*$.
Therefore, this is  right coaction of $(\h_V,\Delta_\bullet)$ on itself:
$$(\Delta_*\otimes Id)\circ \Delta_*=(Id\otimes \Delta_\bullet)\circ \Delta_*.$$
\end{itemize}

\textbf{Notations}. \begin{enumerate}
\item For all $y\in V^*$, we define $\theta_y:(\g'_a)^*\longrightarrow (\g'_a)^*$ by $\theta_y(f)=yf$.
\item For all $x \in (\h_V)_+$, we put $\overline{\Delta}_\bullet(x)=\Delta_\bullet(x)-1\otimes x$ and $\overline{\Delta}_*(x)=\Delta_*(x)-1\otimes x$.
For all $g,f,f_1,\ldots,f_k \in (\g'_a)^*_+$:
$$\langle \overline{\Delta}_*(g),f\otimes f_1\ldots f_k\rangle=\langle g,f*f_1\ldots f_k\rangle.$$
\end{enumerate}

\subsection{Deshuffling coproducts}

\begin{proposition}
For all $g\in T(V)$, for all $i\in [N]$, $\Delta_{\shufd{j}}(g\epsilon_k)=\Delta_{\shuffle}(g)(\epsilon_k\otimes \epsilon_j)$.
\end{proposition}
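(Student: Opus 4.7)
The plan is to prove the identity directly by duality, pairing both sides against a generic element of $\g'_a \otimes \g'_a$.

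First I would observe that both $\Delta_{\shufd{j}}$ and $\Delta_\shuffle$ are defined by duality with respect to the pairing $\langle f_1\ldots f_k \epsilon_i, x_1\ldots x_l\epsilon_j\rangle = \delta_{i,j}\delta_{k,l}f_1(x_1)\ldots f_k(x_k)$ given in the paper. In particular, an easy computation (expanding the definition of $\Delta_\shuffle$ on $T(V)$ as the sum over subsets $I\subseteq [n]$ of $(\prod_{i\in I} x_i)\otimes (\prod_{i\notin I} x_i)$, preserving the order) shows that the coproduct $\Delta_\shuffle$ on $T(V)$ is dual to the shuffle product $\shuffle$ on $T(V^*)$, and symmetrically $\Delta_\shuffle$ on $T(V^*)$ is dual to $\shuffle$ on $T(V)$; in other words,
$$\langle \Delta_\shuffle(g), f\otimes h\rangle = \langle g, f\shuffle h\rangle$$
for all $g \in T(V^*)$, $f,h\in T(V)$.

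Next, I would fix arbitrary $g\in T(V^*)$, $k\in[N]$ and pair both sides of the claimed identity against a pure element $f\epsilon_a \otimes h\epsilon_b$ with $f,h\in T(V)$ and $a,b\in [N]$. On the left, the definition of $\Delta_{\shufd{j}}$ and the formula $f\epsilon_a\shufd{j} h\epsilon_b = \delta_{j,b}(f\shuffle h)\epsilon_a$ give
$$\langle \Delta_{\shufd{j}}(g\epsilon_k), f\epsilon_a\otimes h\epsilon_b\rangle = \langle g\epsilon_k, f\epsilon_a\shufd{j} h\epsilon_b\rangle = \delta_{j,b}\delta_{k,a}\langle g, f\shuffle h\rangle.$$
On the right, factoring the tensor product in $(\g'_a)^*\otimes(\g'_a)^*$ and using the duality statement recalled above yields
$$\langle \Delta_\shuffle(g)(\epsilon_k\otimes \epsilon_j), f\epsilon_a\otimes h\epsilon_b\rangle = \delta_{k,a}\delta_{j,b}\langle \Delta_\shuffle(g), f\otimes h\rangle = \delta_{k,a}\delta_{j,b}\langle g, f\shuffle h\rangle.$$

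The two expressions are equal for every test element, so the identity follows by nondegeneracy of the pairing on each graded piece. There is no real obstacle here: the result is essentially a bookkeeping check that the $\epsilon$-indices decouple from the shuffle variables, and that the shuffle/deshuffle duality on $T(V)$ carries over verbatim to the indexed setting of $T(V)^N$.
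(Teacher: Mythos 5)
Your proposal is correct and follows essentially the same route as the paper: pair both sides against a generic $f\epsilon_a\otimes h\epsilon_b$, use the defining formula $f\epsilon_a\shufd{j}h\epsilon_b=\delta_{j,b}(f\shuffle h)\epsilon_a$ together with the shuffle/deshuffle duality on $T(V)$, and conclude by nondegeneracy of the pairing. Nothing to add.
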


\begin{proof} Let $f_1,f_2\in T(V)$, $i_1,i_2\in [N]$.
\begin{align*}
\langle \Delta_{\shufd{j}}(g\epsilon_k), f_1\epsilon_{i_1}\otimes f_2\epsilon_{i_2}\rangle
&=\langle g\epsilon_k, f_1\epsilon_{i_1} \shufd{j} f_2\epsilon_{i_2}\rangle\\
&=\delta_{i_2,j}\langle g\epsilon_k, f_1\shuffle f_2 \epsilon_{i_1}\rangle\\
&=\delta_{i_2,j}\delta_{i_1,k}\langle g,f_1\shuffle f_2\rangle\\
&=\delta_{i_2,j}\delta_{i_1,k}\langle \Delta_{\shuffle}(g),f_1\otimes f_2\rangle\\
&=\langle  \Delta_{\shuffle}(g)(\epsilon_k\otimes \epsilon_j),f_1\epsilon_{i_1}\otimes f_2\epsilon_{i_2}\rangle.
\end{align*}
As the pairing is nondegenerate, we obtain the result. \end{proof}\\

\textbf{Notations.} We define inductively, for $l\geq 0$, $j_1,\ldots,j_l \in [N]$:
$$\begin{cases}
\Delta_{\shuffle_\emptyset}=Id,\\
\Delta_{\shufd{j_1,\ldots,j_l}}=\left(\Delta_{\shufd{j_1}}\otimes Id^{\otimes(l-1)}\right)\circ \Delta_{\shufd{j_2,\ldots,j_l}}.
\end{cases}$$
For all $g\in T(V^*)$, for all $i\in [N]$:
$$\Delta_{\shufd{j_1,\ldots,j_l}}(g\epsilon_k)=\Delta_{\shuffle}^{(l)}(g)(\epsilon_k\otimes \epsilon_{j_1} \otimes \ldots \otimes \epsilon_{j_l});$$
for all $f_1,\ldots,f_l \in T(V)$:
$$\langle \Delta_{\shufd{j_1,\ldots,j_l}}(g),f_1\otimes \ldots \otimes f_{l+1}\rangle
=\langle g,f_1\shufd{j_1}\ldots \shufd{j_l} f_{l+1}\rangle.$$

\subsection{Dual of the post-Lie product}

\begin{proposition} \label{prop37}
In $\h_V=S((\g'_a)^*)/\langle \emptyset\epsilon_1\rangle$:
\begin{itemize}
\item For all $i\in [N]$, $\Delta_*(\emptyset\epsilon_i)=\emptyset\epsilon_i\otimes 1+1\otimes \emptyset\epsilon_i$.
\item For all $y \in V^*$, $g\in (\g'_a)^*$:
$$\overline{\Delta}_*\circ \theta_y(g)=\sum_{l\geq 0} \sum_{j_1,\ldots,j_l\in [N]} (\theta_{F_{j_1,\ldots,j_l}^*(y)}\otimes \mu)
\circ (\overline{\Delta}_*\otimes Id)\circ \Delta_{\shufd{j_1,\ldots,j_l}}(g),$$
where we denote by $\mu$ the sum of the iterated products of $\h_V$:
\begin{align*}
\mu:&\left\{\begin{array}{rcl}
T(\h_V)&\longrightarrow&\h_V\\
g_1\otimes\ldots \otimes g_k&\longrightarrow&g_1\ldots g_k.
\end{array}\right. \end{align*}
\end{itemize}\end{proposition}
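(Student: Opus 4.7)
The strategy is to verify both identities by testing them against $\U(\g'_a)\otimes\U(\g'_a)$ through the graded duality pairing. The key relation is
\[\langle\overline\Delta_*(g),h\otimes F\rangle=\langle g,h*F\rangle\]
for $h\in(\g'_a)_+$ and $F\in\U(\g'_a)$; it follows immediately from $\Delta_*$ being dual to $*$ together with $\langle 1,h\rangle=0$, which kills the subtracted $1\otimes g$ contribution.

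For the first assertion, Proposition \ref{prop32} gives $\emptyset\epsilon_i*F=0$ for every $F\in\U(\g'_a)_+$, while $\emptyset\epsilon_i*1=\emptyset\epsilon_i$ by the extension convention. Nondegeneracy of the pairing then forces $\overline\Delta_*(\emptyset\epsilon_i)=\emptyset\epsilon_i\otimes 1$, which is equivalent to the stated formula.

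For the second assertion, pair both sides with $xh'\epsilon_p\otimes g_1\cdots g_s$, where $x\in V$, $h'\in T(V)$, $p\in[N]$ and $g_m\in\g'_a$; the case where the first tensor factor is $\emptyset\epsilon_j$ is already handled by the first point. The second formula of Proposition \ref{prop32} expands $xh'\epsilon_p*g_1\cdots g_s$ as a sum indexed by $I=\{i_1<\cdots<i_l\}\subseteq[s]$ and $(j_1,\ldots,j_l)\in[N]^l$, in which every summand is a word in $T(V)\epsilon_p$ beginning with the letter $F_{j_1,\ldots,j_l}(x)$. Pairing with $\theta_y(g)=yg$ strips off the first letters and, by duality between $F_{j_1,\ldots,j_l}$ and $F^*_{j_1,\ldots,j_l}$, yields the scalar $F^*_{j_1,\ldots,j_l}(y)(x)$; the defining property of $\Delta_{\shufd{j_1,\ldots,j_l}}$ then converts the remaining tail-pairing into
\[\Bigl\langle\Delta_{\shufd{j_1,\ldots,j_l}}(g),\ \bigl(h'\epsilon_p*\prod_{m\notin I}g_m\bigr)\otimes g_{i_1}\otimes\cdots\otimes g_{i_l}\Bigr\rangle,\]
and one more use of the basic identity replaces the first tensor slot by $\overline\Delta_*$ applied there, evaluated against $h'\epsilon_p\otimes\prod_{m\notin I}g_m$.

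The main difficulty is the combinatorial bookkeeping: the sum over $I\subseteq[s]$ on the left must be matched with the joint effect of $(\overline\Delta_*\otimes Id^{\otimes l})\circ\Delta_{\shufd{j_1,\ldots,j_l}}(g)$ followed by $\mu$ on the right, where $I^c$ parametrizes exactly which factors of $F$ are collected into the second component of $\overline\Delta_*$ (and then absorbed by $\mu$) versus which are paired against the last $l$ tensor slots of $\Delta_{\shufd{j_1,\ldots,j_l}}(g)$. Edge cases involving $\emptyset\epsilon_1$ cause no trouble since it is identified with $1$ in $\h_V$.
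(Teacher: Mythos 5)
Your proof is correct and follows essentially the same route as the paper: both arguments reduce the identity to a pairing computation against $f\otimes f_1\ldots f_k$, dispose of the $\emptyset\epsilon_i$ case via $\emptyset\epsilon_i*\U(\g'_a)_+=(0)$, and for $f=xf'$ dualize the second formula of Proposition \ref{prop32}, reading off $\langle F_{j_1,\ldots,j_l}^*(y),x\rangle$ from the leading letter and converting the tail into $(\overline{\Delta}_*\otimes Id)\circ\Delta_{\shufd{j_1,\ldots,j_l}}(g)$ followed by $\mu$. The bookkeeping you flag (matching $I\subseteq[k]$ with which factors land in the second leg of $\overline{\Delta}_*$ versus the last $l$ slots of $\Delta_{\shufd{j_1,\ldots,j_l}}$) is exactly what the paper's chain of equalities carries out.
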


\begin{proof} The first point comes from $\emptyset \epsilon_i*\U(\g'_a)_+=(0)$.\\

In order to prove the formula, it is enough to prove that, for $f,f_1,\ldots,f_k\in \g$:
$$\langle\overline{\Delta}_*\circ \theta_y(g), f\otimes f_1\ldots f_k\rangle
=\langle\sum_{l\geq 0} \sum_{j_1,\ldots,j_l\in [N]} (\theta_{F_{j_1,\ldots,j_l}^*(y)}\otimes \mu)
\circ (\overline{\Delta}_*\otimes Id)\circ \Delta_{\shufd{j_1,\ldots,j_l}}(g), f\otimes f_1\ldots f_k\rangle,$$
or equivalently:
$$\langle\theta_y(g), f*f_1\ldots f_k\rangle=\langle\sum_{l\geq 0} \sum_{j_1,\ldots,j_l\in [N]} (\theta_{F_{j_1,\ldots,j_l}^*(y)}\otimes \mu)
\circ (\overline{\Delta}_*\otimes Id)\circ \Delta_{\shufd{j_1,\ldots,j_l}}(g), f\otimes f_1\ldots f_k\rangle,$$
If $f=\emptyset\epsilon_i$, both sides are equal to $0$. Otherwise, we can assume that $f=xf'$, with $x\in V$ and $f'\in \g$.
\begin{align*}
&\langle\theta_y(g), f*f_1\ldots f_k\rangle\\
&=\langle yg, \sum_{I=\{i_1<\ldots<i_l\}\subseteq [k]}\sum_{j_1,\ldots,j_l\in [N]} 
F_{j_1,\ldots,f_l}(x)\left(f'*\left(\prod_{i\notin I}f_i\right)\shufd{j_1}f_{i_1}\ldots \shufd{j_l} f_{i_l}\right)\rangle\\
&=\sum_{I=\{i_1<\ldots<i_l\}\subseteq [k]}\sum_{j_1,\ldots,j_l\in [N]} \langle y,F_{j_1,\ldots,j_l}(x)\rangle
\langle \Delta_{\shufd{j_1,\ldots,j_l}}(g),f'*\left(\prod_{i\notin I}f_i\right)\otimes f_{i_1}\ldots \otimes f_{i_l}\rangle\\
&=\sum_{j_1,\ldots,j_l\in [N]} \langle F_{j_1,\ldots,j_l}^*(y),x\rangle
\langle (Id \otimes \mu)\circ (\overline{\Delta}_*\otimes Id)\circ \Delta_{\shufd{j_1,\ldots,j_l}}(g),f'\otimes f_1\ldots f_k\rangle\\
&=\sum_{j_1,\ldots,j_l\in [N]}
\langle (\theta_{F_{j_1,\ldots,j_l}^*(y)}\otimes Id)\circ (Id \otimes \mu)\circ (\overline{\Delta}_*\otimes Id)\circ \Delta_{\shufd{j_1,\ldots,j_l}}(g),
xf'\otimes f_1\ldots f_k\rangle,
\end{align*}
which ends the proof. \end{proof}\\

In order to obtain a better description of the coproduct $\overline{\Delta}_*$, we are going to identify the following three objects:
$$\xymatrix{&S((\g'_a)_+^*) \ar[rd]^{\sim} \ar[ld]_{\sim}&\\
S((\g'_a)^*)/\langle \emptyset \epsilon_1\rangle&&S((\g'_a)^*)/\langle \emptyset \epsilon_1-1\rangle}$$
Both identification sends $x\in(\g'_a)^*_+$ to its class. Let us  reformulate Proposition \ref{prop37}
in the vector space $S((\g'_a)^*)/\langle \emptyset \epsilon_1-1\rangle$:
\begin{align*}
\overline{\Delta}_*\circ \theta_y(g\epsilon_k)&=\sum_{l\geq 0} \sum_{j_1,\ldots,j_l\in [N]} (\theta_{F_{j_1,\ldots,j_l}^*(y)}\otimes \mu)
\circ (\overline{\Delta}_*\otimes Id)(\Delta_{\shuffle}^{(l)}(g)\epsilon_k\otimes \epsilon_{j_1}\otimes \ldots \otimes \epsilon_{j_l})\\
&-\sum_{l\geq 0} \sum_{j_1,\ldots,j_l\in [N]} (\theta_{F_{j_1,\ldots,j_l,1}^*(y)}\otimes \mu)
\circ (\overline{\Delta}_*\otimes Id)(\Delta_{\shuffle}^{(l+1)}(g)\epsilon_k\otimes \epsilon_{j_1}\otimes \ldots \otimes \epsilon_{j_l}\otimes \epsilon_1)\\
&=\sum_{l\geq 0} \sum_{j_1,\ldots,j_l\in [N]} (\theta_{F_{j_1,\ldots,j_l}^*(y)}\otimes \mu)
\circ (\overline{\Delta}_*\otimes Id)(\Delta_{\shuffle}^{(l)}(g)\epsilon_k\otimes \epsilon_{j_1}\otimes \ldots \otimes \epsilon_{j_l})\\
&-\left(\sum_{l\geq 0} \sum_{j_1,\ldots,j_l\in [N]} (\theta_{F_{j_1,\ldots,j_l,1}^*(y)}\otimes \mu)
\circ (\overline{\Delta}_*\otimes Id)(\Delta_{\shuffle}^{(l)}(g)\epsilon_k\otimes \epsilon_{j_1}\otimes \ldots \otimes \epsilon_{j_l})\right)
(1\otimes \emptyset\epsilon_1).
\end{align*}
Finally, identifying in $S((\g'_a)_+^*)$:

\begin{proposition}
For all $j_1,\ldots,j_l\in [N]$, we put:
$$G_{j_1,\ldots,j_l}=F_{j_1,\ldots,j_l}-F_{j_1,\ldots,j_l,1}.$$
In $S((\g'_a)^*_+)/\langle \emptyset \epsilon_1-1\rangle$:
\begin{itemize}
\item For all $i\in [N]$, $\overline{\Delta}_*(\emptyset \epsilon_i)=\emptyset\epsilon_i\otimes 1$.
\item For all $y\in V^*$, for all $g\in(\g'_a)^*_+$:
$$\overline{\Delta}_*\circ \theta_y(g)=\sum_{l\geq 0} \sum_{j_1,\ldots,j_l\in [N]} (\theta_{G_{j_1,\ldots,j_l}^*(y)}\otimes \mu)
\circ (\overline{\Delta}_*\otimes Id)\circ \Delta_{\shufd{j_1,\ldots,j_l}}(g).$$
\end{itemize}\end{proposition}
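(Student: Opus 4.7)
The plan is to directly transfer Proposition 37 from the quotient $S((\g'_a)^*)/\langle\emptyset\epsilon_1\rangle$ to the isomorphic quotient $S((\g'_a)^*)/\langle\emptyset\epsilon_1 - 1\rangle$, both identified with $S((\g'_a)_+^*)$. The first bullet is immediate, since $\overline{\Delta}_*(\emptyset\epsilon_i) = \emptyset\epsilon_i \otimes 1$ from Proposition 37 involves no occurrence of $\emptyset\epsilon_1$ on either side and is therefore invariant under the change of identification.

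For the second bullet, the main task is to account for the difference between the two identifications: in the first, $\emptyset\epsilon_1$ is set to $0$; in the second, to $1$. Writing the coproduct $\Delta_{\shufd{j_1,\ldots,j_l}}(g\epsilon_k) = \Delta_{\shuffle}^{(l)}(g)\epsilon_k \otimes \epsilon_{j_1}\otimes\cdots\otimes\epsilon_{j_l}$, a tensor slot $g^{(i+1)}\epsilon_{j_i}$ equals $\emptyset\epsilon_1$ precisely when $g^{(i+1)}=\emptyset$ and $j_i = 1$. Such slots contribute nothing under the $\emptyset\epsilon_1 = 0$ identification but contribute a factor $1$ under the $\emptyset\epsilon_1 = 1$ identification. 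I would therefore rewrite Proposition 37's formula in $S((\g'_a)^*)/\langle\emptyset\epsilon_1 - 1\rangle$ by subtracting a corrective sum indexed by $(l+1, j_1,\ldots,j_l, 1)$, involving $\Delta_{\shuffle}^{(l+1)}(g)\epsilon_k\otimes\epsilon_{j_1}\otimes\cdots\otimes\epsilon_{j_l}\otimes\epsilon_1$; the additional $\epsilon_1$ slot absorbs the $\emptyset$-piece that, under the new identification, contributes $\emptyset\epsilon_1 = 1$ rather than $0$.

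To conclude, I would use the iterated-coproduct identity $\Delta_{\shuffle}^{(l+1)} = (\mathrm{Id}^{\otimes l}\otimes \Delta_{\shuffle})\circ\Delta_{\shuffle}^{(l)}$ to split off the last shuffle factor as $1 \otimes \emptyset\epsilon_1$, and, after identifying $\emptyset\epsilon_1 = 1$, recognise the correction as the Proposition 37 formula with the coefficient $F^*_{j_1,\ldots,j_l}(y)$ replaced by $F^*_{j_1,\ldots,j_l,1}(y)$. Combining the two sums collapses the coefficients into $F^*_{j_1,\ldots,j_l}(y) - F^*_{j_1,\ldots,j_l,1}(y) = G^*_{j_1,\ldots,j_l}(y)$, yielding the stated identity. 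The delicate step is the bookkeeping in this rewriting: one must verify that cocommutativity of $\Delta_{\shuffle}$ and the symmetry of the iterated coproducts make each missing $\emptyset\epsilon_1$-contribution match exactly one term of the correction sum, with no spurious extra summands.
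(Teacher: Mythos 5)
Your proposal reproduces the paper's own derivation step for step: the paper likewise transfers Proposition \ref{prop37} into the quotient $S((\g'_a)^*)/\langle \emptyset\epsilon_1-1\rangle$, subtracts the corrective sum indexed by the words $(j_1,\ldots,j_l,1)$ carrying the extra $\Delta_{\shuffle}^{(l+1)}$ slot, collapses that slot to a factor $1\otimes\emptyset\epsilon_1$ via the counit of $\Delta_{\shuffle}$, and then merges the coefficients into $G^*_{j_1,\ldots,j_l}=F^*_{j_1,\ldots,j_l}-F^*_{j_1,\ldots,j_l,1}$. The ``delicate bookkeeping'' you flag at the end (matching each missing $\emptyset\epsilon_1$-contribution to exactly one term of the correction, using cocommutativity and the symmetry of $F_{j_1,\ldots,j_l}$ in its indices) is precisely the step the paper also dispatches in a single displayed equality, so your attempt agrees with the paper's proof in both structure and level of detail.
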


\textbf{Example.} For $\siso$, as $V$ is a module over the associative algebra $(\g_{(1,0)},\triangleleft)$, if $l\geq 2$, $F_{j_1,\ldots,j_l}=0$
by Proposition \ref{prop25}, so  $G_{j_1,\ldots,j_l}=0$. Moreover:
\begin{align*}
F_\emptyset&=\left(\begin{array}{cc}
1&0\\ 0&1
\end{array}\right),&
F_1&=\left(\begin{array}{cc}
0&0\\ 0&1
\end{array}\right),&
F_2&=\left(\begin{array}{cc}
0&1\\ 0&0
\end{array}\right).\\
G_\emptyset=F_\emptyset-F_1&=\left(\begin{array}{cc}
1&0\\ 0&0
\end{array}\right),&
G_1=F_1&=\left(\begin{array}{cc}
0&0\\ 0&1
\end{array}\right),&
G_2=F_2&=\left(\begin{array}{cc}
0&1\\ 0&0
\end{array}\right).\\
G_\emptyset^*&=\left(\begin{array}{cc}
1&0\\ 0&0
\end{array}\right),&
G_1^*&=\left(\begin{array}{cc}
0&0\\ 0&1
\end{array}\right),&
G_2^*&=\left(\begin{array}{cc}
0&0\\1&0
\end{array}\right).
\end{align*}
The coproduct $\overline{\Delta}_*$ on $S((\siso)^*_+)$ is given by:
\begin{itemize}
\item For all $i\in [2]$, $\overline{\Delta}_*(\emptyset\epsilon_i)=\emptyset\epsilon_i\otimes 1$.
\item For all $g\in \K\langle x_1,x_2\rangle$, for all $i\in [2]$:
\begin{align*}
\overline{\Delta}_*\circ \theta_{x_1}(g\epsilon_i)&=(\theta_{x_1}\otimes Id)\circ \overline{\Delta}_*(g\epsilon_i)+
(\theta_{x_2}\otimes \mu)\circ(\overline{\Delta}_*\otimes Id)(\Delta_{\shuffle}(g) \epsilon_i\otimes \epsilon_2),\\
\overline{\Delta}_*\circ \theta_{x_2}(g\epsilon_i)&=(\theta_{x_2}\otimes \mu)\circ(\overline{\Delta}_*\otimes Id)(\Delta_{\shuffle}(g) \epsilon_i\otimes \epsilon_1).
\end{align*}
\end{itemize}
These are formulas of Lemma 4.1 of \cite{Gray}, where $a_w=w\epsilon_2$, $b_w=w\epsilon_1$, $\theta_0=\theta_{x_1}$,
$\theta_1=\theta_{x_2}$ and $\tilde{\Delta}=\overline{\Delta}_*$.

\subsection{Dual of the pre-Lie product}

\textbf{Notations.} We denote by $\Delta_{\shufg{1}}$ the coproduct on $T_+(V^*)\otimes (V)^{N-1}$ dual to the product $\shufg{1}$.
As $\shufg{1}=\shufd1^{op}$, $\Delta_{\shufg{1}}=\Delta_{\shufd1}^{cop}$, and for all $g\in T(V)$, for all $i\in [N]$:
$$\Delta_{\shufg{1}}(g\epsilon_i)=\Delta_{\shuffle}(g)(\epsilon_1\otimes \epsilon_k).$$

\begin{proposition}
In $S((\g'_a)_+^*)/\langle \emptyset\epsilon_1\rangle$, for all $g\in(\g'_a)^*_+$:
\begin{align*}
\overline{\Delta}_\bullet(g)&=\overline{\Delta}_*(g)+(Id \otimes \mu)\circ (\overline{\Delta}_*\otimes Id)\circ \Delta_{\shufg{1}}(g).
\end{align*}\end{proposition}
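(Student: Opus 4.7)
The plan is to verify the identity under graded duality with $\U((\g'_a)_+)$. Because $\overline{\Delta}_\bullet(g)$, $\overline{\Delta}_*(g)$, and the left tensor-factor of $(Id\otimes\mu)\circ(\overline{\Delta}_*\otimes Id)\circ\Delta_{\shufg{1}}(g)$ all have no $1\otimes *$ component by construction, I only need to pair both sides with $f\otimes F$ where $f\in(\g'_a)_+$ and $F=f_1f_2\cdots f_k$ is a symmetric monomial in $\U((\g'_a)_+)=S((\g'_a)_+)$.

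For the left-hand side, I would use that $\langle \overline{\Delta}_\bullet(g), f\otimes F\rangle = \langle g, f\odot F\rangle$ and that the pre-Lie extension $f\odot F=\sum_{I\subseteq [k]}(f\bullet F_I)\cdot F_{[k]\setminus I}$ collapses under $g$: since $g\in(\g'_a)_+^*$ annihilates every symmetric monomial of degree $\neq 1$, only the term $I=[k]$ contributes, giving $\langle g, f\odot F\rangle=\langle g, f\bullet F\rangle$. Applying Proposition \ref{prop33}, with $\shufg{a}=\shufg{1}$ since $a=(1,0,\ldots,0)$, this is
\[
\langle g, f*F\rangle + \sum_{p=1}^k \langle g, (f*F_{\hat p})\shufg{1}f_p\rangle,
\]
where $F_{\hat p}$ denotes $F$ with its $p$-th factor removed.

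For the right-hand side, the first summand pairs with $f\otimes F$ as $\langle g, f*F\rangle$ by the very definition of $\overline{\Delta}_*$. For the second, I would write $\Delta_{\shufg{1}}(g)=\sum g^{[1]}\otimes g^{[2]}$ and unfold the compound expression: the commutative product $\mu$ on $\h_V$ is dual to the symmetric-coalgebra coproduct on $\U((\g'_a)_+)$, so
\[
\langle g^{[1],(2)}g^{[2]},F\rangle = \sum_{I\subseteq [k]}\langle g^{[1],(2)},F_I\rangle\langle g^{[2]},F_{[k]\setminus I}\rangle,
\]
and because $g^{[2]}\in(\g'_a)_+^*$ pairs only with single variables, this reduces to $\sum_p\langle g^{[1],(2)},F_{\hat p}\rangle\langle g^{[2]},f_p\rangle$. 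Recombining the inner $g^{[1],(1)}\otimes g^{[1],(2)}$ sum via the defining pairing of $\overline{\Delta}_*$, then the outer $g^{[1]}\otimes g^{[2]}$ sum via the defining pairing of $\Delta_{\shufg{1}}$, yields exactly $\sum_{p=1}^k\langle g, (f*F_{\hat p})\shufg{1}f_p\rangle$.

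Adding the two right-hand-side contributions gives the same expression as Proposition \ref{prop33} produced for the left-hand side, so the identity holds. The main obstacle is the careful bookkeeping between the several dualities in play --- notably that the commutative multiplication on $\h_V$ corresponds to the symmetric-coalgebra coproduct on $\U((\g'_a)_+)$, and that elements of $(\g'_a)_+^*$ project onto the degree-one symmetric component --- but once these identifications are unwound, the whole statement reduces mechanically to Proposition \ref{prop33}.
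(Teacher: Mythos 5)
Your proof is correct and follows essentially the same route as the paper: pair both sides against $f\otimes f_1\cdots f_k$ with $f\in(\g'_a)_+$, reduce $\langle g,f\odot F\rangle$ to $\langle g,f\bullet F\rangle$, invoke Proposition \ref{prop33} with $\shufg{a}=\shufg{1}$, and unwind the dualities ($\mu$ against the symmetric coproduct, $\Delta_{\shufg{1}}$ against $\shufg{1}$, $\overline{\Delta}_*$ against $*$) term by term. The only difference is that you make explicit two points the paper leaves implicit, namely the collapse of $\odot$ to $\bullet$ when paired with $g\in(\g'_a)_+^*$ and the fact that the left tensor legs are concentrated in symmetric degree one, which is what justifies testing only against $f\otimes F$ with $f$ a single generator.
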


\begin{proof} Let $f,f_1,\ldots,f_k \in (\g'_a)_+$.
\begin{align*}
\langle \overline{\Delta}_\bullet(g), f\otimes f_1\ldots f_k \rangle
&=\langle g, f\bullet f_1\ldots f_k\rangle\\
&=\langle g,f*f_1\ldots f_k+\sum_{p=1}^k (f*f_1\ldots \widehat{f_p}\ldots f_k) \shufg{1} f_p\rangle\\
&=\langle \overline{\Delta}_*(g),f\otimes f_1\ldots f_k\rangle+\langle \Delta_{\shufg{1}}(g),\sum_{p=1}^k f*f_1\ldots \widehat{f_p}\ldots f_k\otimes f_p\rangle\\
&=\langle \overline{\Delta}_*(g),f\otimes f_1\ldots f_k\rangle+\langle (\Delta_*\otimes Id) \circ\Delta_{\shufg{1}}(g),\sum_{p=1}^k f\otimes 
f_1\ldots \widehat{f_p}\ldots f_k\otimes f_p\rangle\\
&=\langle \overline{\Delta}_*(g),f\otimes f_1\ldots f_k\rangle+\langle (Id \otimes \mu)
\circ(\Delta_*\otimes Id)\circ \Delta_{\shufg{1}}(g),f\otimes f_1\ldots f_k\rangle.
\end{align*}
As $(\g'_a,*)$ is pre-Lie, $\overline{\Delta}_\bullet(g) \in(\g'_a)^*_+\otimes S((\g'_a)_+^*)$ and the nondegeneracy of the pairing 
implies the formula. \end{proof}\\

Rewriting this formula in $S((\g'_a)_+^*)/\langle \emptyset\epsilon_1-1\rangle$:
\begin{align*}
\overline{\Delta}_\bullet(g\epsilon_1)&=\overline{\Delta}_*(g\epsilon_1)+(Id \otimes \mu)\circ (\overline{\Delta}_*\otimes Id)
(\Delta_{\shuffle}(g)(\epsilon_1\otimes \epsilon_1))\\
&=\overline{\Delta}_*(g\epsilon_1)+(Id \otimes \mu)\circ (\overline{\Delta}_*\otimes Id)
((\Delta_{\shuffle}(g)-g\otimes \emptyset)(\epsilon_1\otimes \epsilon_1))\\
&=\overline{\Delta}_*(g\epsilon_1)(1\otimes (1-\emptyset\epsilon_1))+(Id \otimes \mu)\circ (\overline{\Delta}_*\otimes Id)
(\Delta_{\shuffle}(g)(\epsilon_1\otimes \epsilon_1))\\
&=(Id \otimes \mu)\circ (\overline{\Delta}_*\otimes Id)(\Delta_{\shuffle}(g)(\epsilon_1\otimes \epsilon_1)).
\end{align*}

Identifying in $S((\g'_a)_+^*)$:

\begin{proposition}
In $S((\g'_a)^*_+)/\langle \emptyset \epsilon_1-1\rangle$, if $g\in T(V^*)$:
\begin{align*}
\overline{\Delta}_\bullet(g\epsilon_1)&=(Id\otimes \mu)\circ (\overline{\Delta}_*\otimes Id)(\Delta_{\shuffle}(g)(\epsilon_1\otimes \epsilon_1)),\\
\mbox{ if }i\geq 2,\: \overline{\Delta}_\bullet(g\epsilon_i)&=\overline{\Delta}_*(g\epsilon_i)+
(Id\otimes \mu)\circ (\overline{\Delta}_*\otimes Id)(\Delta_{\shuffle}(g)(\epsilon_i\otimes \epsilon_1)),
\end{align*}
with the convention $\emptyset\epsilon_1=1$. We put $\Delta_\bullet(g)=\overline{\Delta}_\bullet(g)+1\otimes g$ for all $g\in (\g_a')^*_+$
and extend $\Delta_\bullet$ to $S((\g'_a)_+^*)$ as an algebra morphism. 
This coproduct makes $S((\g'_a)_+^*)$ a Hopf algebra, isomorphic to the graded dual of the enveloping algebra of $((\g'_a)_+,[-,-]_*)$.
\end{proposition}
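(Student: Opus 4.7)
The two displayed formulas for $\overline{\Delta}_\bullet(g\epsilon_1)$ and $\overline{\Delta}_\bullet(g\epsilon_i)$ (for $i\geq 2$) are essentially a rewriting of the preceding proposition inside the quotient $S((\g'_a)^*)/\langle \emptyset\epsilon_1-1\rangle$, and the computation needed has already been carried out in the display just above the statement. My plan is to read that display as establishing the identities in the quotient and then to transport the result back to $S((\g'_a)_+^*)$ via the canonical identification sending $x\in(\g'_a)_+^*$ to its class. For $g\epsilon_1$, the summand $\overline{\Delta}_*(g\epsilon_1)$ coming from the preceding proposition acquires a factor $(1\otimes(1-\emptyset\epsilon_1))$, which vanishes modulo $\langle\emptyset\epsilon_1-1\rangle$, leaving only the convolution term; for $g\epsilon_i$ with $i\geq 2$ no cancellation is available and one simply substitutes $\epsilon_i\otimes\epsilon_1$ for $\epsilon_1\otimes\epsilon_1$ in the convolution.

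For the Hopf algebra assertion, my plan is to invoke the Oudom--Guin construction recalled at the start of Subsection~3.3. By Theorem~\ref{theo30}, $(\g'_a,\bullet)$ is pre-Lie, hence $\odot$ makes $(S(\g'_a),\odot,\Delta)$ a Hopf algebra isomorphic to $\U(\g'_a,[-,-]_\bullet)$; and the remark preceding the lemma of Subsection~3.3 identifies $[-,-]_\bullet$ with $\:_a[-,-]_*$. I would then verify that $(\g'_a)_+$ is a Lie ideal for $\:_a[-,-]_*$ by a direct computation: $[\emptyset\epsilon_1,g\epsilon_j]_*=\:_a[\emptyset\epsilon_1,g\epsilon_j]-g\epsilon_j*\emptyset\epsilon_1$, where $\:_a[\emptyset\epsilon_1,g\epsilon_j]$ equals $g\epsilon_j$ for $j\geq 2$ and $0$ for $j=1$, while $g\epsilon_j*\emptyset\epsilon_1$ is the derivation applying $F_1$ letter-by-letter; both outputs lie in $(\g'_a)_+$. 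Consequently $\U((\g'_a)_+,\:_a[-,-]_*)$ sits as a graded connected sub-Hopf-algebra of $\U(\g'_a,\:_a[-,-]_*)$. Taking the graded dual of this inclusion endows $S((\g'_a)_+^*)$ with a graded connected Hopf algebra structure whose coproduct is, by Proposition~\ref{prop34}, dual to $\odot=\circledast$; unwinding the pairing and the identifications yields precisely the coproduct $\Delta_\bullet$ described above once extended multiplicatively.

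The main obstacle is bookkeeping: carefully tracking the three identifications
$$S((\g'_a)_+^*)\;\simeq\; S((\g'_a)^*)/\langle \emptyset\epsilon_1\rangle\;\simeq\; S((\g'_a)^*)/\langle \emptyset\epsilon_1-1\rangle,$$
and verifying that the convention $\emptyset\epsilon_1=1$ used to kill the first summand in the $g\epsilon_1$ formula is an exact identity modulo $\langle\emptyset\epsilon_1-1\rangle$ and compatible with the multiplicative extension of $\Delta_\bullet$. A secondary subtlety is that the decomposition $\g'_a=(\g'_a)_+\oplus \K\emptyset\epsilon_1$ is only a vector-space splitting, not a Lie algebra splitting, so one must check the ideal property above in order for the sub-Hopf-algebra $\U((\g'_a)_+,\:_a[-,-]_*)$ to inherit the graded connected structure needed to pass to the graded dual.
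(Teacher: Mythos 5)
Your proposal follows essentially the same route as the paper: the two displayed formulas are exactly the rewriting, in $S((\g'_a)^*_+)/\langle \emptyset\epsilon_1-1\rangle$, of the preceding proposition via the three identifications (the paper offers no separate proof beyond the computation displayed just above the statement), and the Hopf-algebra assertion is the machinery of Sections 1 and 3 (Oudom--Guin/Theorem \ref{theo30}, $[-,-]_\bullet=\:_a[-,-]_*$, Proposition \ref{prop34}, graded duality) applied to $(\g'_a)_+$, which the paper leaves implicit. The one point to tighten is that your stability check for $(\g'_a)_+$ only treats brackets against $\emptyset\epsilon_1$; to get the Lie subalgebra property needed for $\U((\g'_a)_+,\:_a[-,-]_*)\subseteq \U(\g'_a,\:_a[-,-]_*)$ you should also invoke the graduation of Proposition \ref{prop35} (all products are homogeneous of degree $0$ and $(\g'_a)_+$ is the part of positive degree), and likewise the $i\geq 2$ case is not a bare substitution: the empty-word terms of $\Delta_{\shuffle}(g)$ must be tracked through the change of quotient just as in the $i=1$ computation, the difference being that there they no longer cancel the standalone summand $\overline{\Delta}_*(g\epsilon_i)$.
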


\textbf{Remark.} These are \emph{mutatis mutandis} the formulas of Lemma 4.3 in \cite{Gray}.

\bibliographystyle{amsplain}
\bibliography{biblio}

\providecommand{\bysame}{\leavevmode\hbox to3em{\hrulefill}\thinspace}
\providecommand{\MR}{\relax\ifhmode\unskip\space\fi MR }
\providecommand{\MRhref}[2]{%
  \href{http://www.ams.org/mathscinet-getitem?mr=#1}{#2}
}
\providecommand{\href}[2]{#2}
\begin{thebibliography}{10}

\bibitem{Cartier}
Pierre Cartier, \emph{Vinberg algebras, {L}ie groups and combinatorics}, Quanta
  of maths, Clay Math. Proc., vol.~11, Amer. Math. Soc., Providence, RI, 2010,
  pp.~107--126.

\bibitem{Ebrahimi}
Kurusch Ebrahimi-Fard, Alexander Lundervold, and Hans Munthe-Kaas, \emph{On the
  {L}ie enveloping algebra of a post-{L}ie algebra}, arXiv:1410.6350, 2015.

\bibitem{Foissy2}
Lo{\"{\i}}c Foissy, \emph{A pre-{L}ie algebra associated to a linear
  endomorphism and related algebraic structures}, Eur. J. Math. \textbf{1}
  (2015), no.~1, 78--121, arXiv:1309.5318.

\bibitem{Gray}
W.~Steven Gray and Kurusch Ebrahimi-Fard, \emph{S{ISO} {O}utput {A}ffine
  {F}eedback {T}ransformation {G}roup and {I}ts {F}a\`a di {B}runo {H}opf
  {A}lgebra}, SIAM J. Control Optim. \textbf{55} (2017), no.~2, 885--912.

\bibitem{Manchon}
Dominique Manchon, \emph{A short survey on pre-{L}ie algebras}, Noncommutative
  geometry and physics: renormalisation, motives, index theory, ESI Lect. Math.
  Phys., Eur. Math. Soc., Z\"urich, 2011, pp.~89--102.

\bibitem{MK}
Hans~Z. Munthe-Kaas and Alexander Lundervold, \emph{On post-{L}ie algebras,
  {L}ie-{B}utcher series and moving frames}, Found. Comput. Math. \textbf{13}
  (2013), no.~4, 583--613, arXiv:1203.4738.

\bibitem{Oudom2}
Jean-Michel Oudom and Daniel Guin, \emph{Sur l'alg\`ebre enveloppante d'une
  alg\`ebre pr\'e-{L}ie}, C. R. Math. Acad. Sci. Paris \textbf{340} (2005),
  no.~5, 331--336.

\bibitem{Oudom1}
\bysame, \emph{On the {L}ie enveloping algebra of a pre-{L}ie algebra}, J.
  K-Theory \textbf{2} (2008), no.~1, 147--167, arXiv:math/0404457.

\bibitem{Sloane}
N.~J.~A. Sloane, \emph{On-line encyclopedia of integer sequences},
  http://oeis.org/.

\bibitem{Vallette}
Bruno Vallette, \emph{Homology of generalized partition posets}, J. Pure Appl.
  Algebra \textbf{208} (2007), no.~2, 699--725, arXiv:math/0405312.

\end{thebibliography}

\end{document}